\documentclass[12pt]{article}
\usepackage{textcomp}%
\usepackage[dvipsnames]{xcolor}%
\usepackage{graphicx}%
\usepackage{multirow}%
\usepackage{amsmath,amssymb,amsfonts,amsthm,mathrsfs}%
\usepackage{ytableau}
\usepackage[font=footnotesize,labelfont=bf]{caption}
\usepackage{youngtab}
\usepackage{pgffor}
\usepackage{mathrsfs}%
\usepackage[title]{appendix}%
\usepackage[margin=3cm]{geometry}
\usepackage{mathdots}
\usepackage[colorlinks=true,linkcolor=blue,citecolor=blue,hyperfootnotes=false]{hyperref}
\usepackage{cleveref}%
\usepackage{authblk}
\usepackage{tikz}
\usepackage{ifthen}
\tikzstyle{Mytext} = [text width=0.5cm,text centered]
\setlength\parindent{24pt}
\usepackage{enumitem}
\usetikzlibrary{calc}
\usetikzlibrary{matrix,arrows,decorations.pathmorphing,decorations.markings,calc}
\usepackage{graphicx}
\usepackage{caption}
\usepackage{subcaption}
\usepackage{array}

\usepackage{ytableau}
\usepackage{parskip}
\usepackage{enumerate}


\newcommand{\ZZ}{ \mathbb{Z} }

\newcounter{x}
\newcounter{y}
\newcounter{z}
\newcounter{row}

\newcommand\xaxis{210}
\newcommand\yaxis{-30}
\newcommand\zaxis{90}

\newcommand\topside[3]{
  \fill[fill=YellowGreen, draw=black,shift={(\xaxis:#1)},shift={(\yaxis:#2)},
  shift={(\zaxis:#3)}] (0,0) -- (30:1) -- (0,1) --(150:1)--(0,0);
}

\newcommand\leftside[3]{
  \fill[fill=Orchid, draw=black,shift={(\xaxis:#1)},shift={(\yaxis:#2)},
  shift={(\zaxis:#3)}] (0,0) -- (0,-1) -- (210:1) --(150:1)--(0,0);
}

\newcommand\rightside[3]{
  \fill[fill=RawSienna, draw=black,shift={(\xaxis:#1)},shift={(\yaxis:#2)},
  shift={(\zaxis:#3)}] (0,0) -- (30:1) -- (-30:1) --(0,-1)--(0,0);
}

\newcommand\cube[3]{
  \topside{#1}{#2}{#3} \leftside{#1}{#2}{#3} \rightside{#1}{#2}{#3}
}

\newcommand\planepartition[1]{
    \setcounter{x}{-1}
    \foreach \a in {#1} {
        \addtocounter{x}{1}
        \setcounter{y}{-1}
        \foreach \b in \a {
            \addtocounter{y}{1}
            \setcounter{z}{-1}
            \ifthenelse{\b > 0}{
                \foreach \c in {1,...,\b} {
                    \addtocounter{z}{1}
                    \cube{\value{x}}{\value{y}}{\value{z}}
                }
            }{
                \topside{\value{x}}{\value{y}}{-1}
            }
        }
    }
}

\newcommand\highlightedreverseplanepartition[3]{
 \begin{tikzpicture}[scale = 0.8]
 \setcounter{x}{-1}
  \foreach \a in {#2} {
    \addtocounter{x}{1}
    \setcounter{y}{-1}

    \setcounter{row}{-1}
    \let\mylist\empty
    \foreach \b in \a {
        \addtocounter{row}{1}
        \ifx\mylist\empty
            \xdef\mylist{\b}%
        \else
            \xdef\mylist{\b,\mylist}%
        \fi
    }
    \foreach \b in \mylist {
      \addtocounter{y}{1}
      \setcounter{z}{-1}
      \ifthenelse{#1 > 0}{
        \foreach \c in {1,...,#1} {
            \addtocounter{z}{1}
            \rightside{\value{y}-\value{row}+1}{\value{x}}{\value{z}+1}
        }
      }{}
    }
  }
 
 \setcounter{x}{-1}
  \foreach \a in {#2} {
    \addtocounter{x}{1}
    \setcounter{y}{-1}

    \setcounter{row}{-1}
    \let\mylist\empty
    \foreach \b in \a {
        \addtocounter{row}{1}
        \ifx\mylist\empty
            \xdef\mylist{\b}%
        \else
            \xdef\mylist{\b,\mylist}%
        \fi
    }

    \setcounter{z}{-1}
    \foreach \c in {1,...,#1} {
        \addtocounter{z}{1}
        \leftside{\value{y}-\value{row}}{\value{x}}{\value{z}}
    }
    
    \foreach \b in \mylist {
      \addtocounter{y}{1}
      \setcounter{z}{-1}
      \ifthenelse{\b > 0}{
        \foreach \c in {1,...,\b} {
            \addtocounter{z}{1}
            \cube{\value{y}-\value{row}}{\value{x}}{\value{z}}
        }
      }{
        \topside{\value{y}-\value{row}}{\value{x}}{-1}
      }
      \
    }
  }

  \end{tikzpicture}
}

\newcommand\reverseplanepartition[2]{
 \setcounter{x}{-1}
  \foreach \a in {#2} {
    \addtocounter{x}{1}
    \setcounter{y}{-1}

    \setcounter{row}{-1}
    \let\mylist\empty
    \foreach \b in \a {
        \addtocounter{row}{1}
        \ifx\mylist\empty
            \xdef\mylist{\b}%
        \else
            \xdef\mylist{\b,\mylist}%
        \fi
    }
    \foreach \b in \mylist {
      \addtocounter{y}{1}
      \setcounter{z}{-1}
      \ifthenelse{#1 > 0}{
        \foreach \c in {1,...,#1} {
            \addtocounter{z}{1}
            \rightside{\value{y}-\value{row}+1}{\value{x}}{\value{z}+1}
        }
      }{}
    }
  }
 
 \setcounter{x}{-1}
  \foreach \a in {#2} {
    \addtocounter{x}{1}
    \setcounter{y}{-1}

    \setcounter{row}{-1}
    \let\mylist\empty
    \foreach \b in \a {
        \addtocounter{row}{1}
        \ifx\mylist\empty
            \xdef\mylist{\b}%
        \else
            \xdef\mylist{\b,\mylist}%
        \fi
    }

    \setcounter{z}{-1}
    \foreach \c in {1,...,#1} {
        \addtocounter{z}{1}
        \leftside{\value{y}-\value{row}}{\value{x}}{\value{z}}
    }
    
    \foreach \b in \mylist {
      \addtocounter{y}{1}
      \setcounter{z}{-1}
      \ifthenelse{\b > 0}{
        \foreach \c in {1,...,\b} {
            \addtocounter{z}{1}
            \cube{\value{y}-\value{row}}{\value{x}}{\value{z}}
        }
      }{
        \topside{\value{y}-\value{row}}{\value{x}}{-1}
      }
      \
    }
  }
}

\newcommand{\getXCoordinate}[1]{%
    \def\xcoordinate{}
    \def\ycoordinate{}
    \expandafter\splitCoordinate#1\relax
    \xcoordinate
}

\def\splitCoordinate#1,#2\relax{%
    \def\xcoordinate{#1}%
    \def\ycoordinate{#2}%
}

\newtheorem{theorem}{Theorem}[section]
\newtheorem{proposition}[theorem]{Proposition}%
\newtheorem{lemma}[theorem]{Lemma}
\newtheorem{cor}[theorem]{Corollary}
\newtheorem{example}[theorem]{Example}%
\newtheorem{remark}[theorem]{Remark}%
\newtheorem{definition}[theorem]{Definition}%
\newtheorem{result}{Result}

\raggedbottom

\title{Colored Vertex Models and Interacting Reverse Plane Partitions}

\author[1]{Jonah Guse\thanks{This work was done as part of an undergraduate reading course at the University of Wisconsin, Madison.}}

\author[1]{David Jiang\protect\footnotemark[1]}

\author[2]{David Keating\thanks{The third author was partially supported by the NSF RTG grant DMS 1937241.}}

\affil[1]{Department of Mathematics, University of Wisconsin,  Madison}
\affil[2]{Department of Mathematics, University of Illinois, Urbana-Champaign}

\date{}
\begin{document}

\maketitle

\abstract{We study the coupling of pairs of reverse plane partitions of the same shape by assigning a certain local interaction between the reverse plane partitions.We show that they are in bijection with a certain Yang-Baxter integrable colored vertex model. By utilizing the Yang-Baxter equation for this colored vertex model, we are able to compute the generating function for the interacting pairs of reverse plane partitions. We also give a bijection between the coupled pairs of reverse plane partitions with the interaction strength set to zero and a single reverse plane partition of the same shape.}

\setcounter{tocdepth}{2}
\tableofcontents

\section{Introduction}

In this paper, we use a multicolored Yang-Baxter integrable vertex model to construct a novel model of interacting pairs of reverse plane partitions. Our main result is a product formula for the generating function of interacting pairs of reverse plane partitions. 
\begin{result}[Theorem \ref{mainthm}]
For any fixed $\lambda$, the generating function for volume weighted pairs of interacting reverse plane partitions is given by
\begin{equation}
\sum_{\Lambda, \Lambda^\prime \in RPP(\lambda)} q^{|\Lambda| + |\Lambda'|}t^{g(\Lambda,\Lambda^\prime)} = \prod_{c \in \lambda} \frac{1}{(1-q^{h_{\lambda}(c)})(1-q^{h_{\lambda}(c)}t)}.
\end{equation}
\end{result}
Here $t$ is a positive real parameter governing the strength of the interaction between the pair of reverse plane partitions and $g(\Lambda,\Lambda^\prime)$ counts the number of occurrences of certain lozenge orientations when viewing the reverse plain partitions as tilings. Note that when $t=0$, the generating function reduces to that of a single reverse plane partition indicating that volume-weighted pairs of reverse plane partitions with interaction strength zero and fixed total volume are equinumerous with the volume-weighted single reverse plane partitions with the same fixed volumes. Our second main result is a bijective proof of this fact.
\begin{result}[Proposition \ref{prop:sliding}]
For any integer $n \ge 0$, there is a bijection between the pairs of reverse plane partitions $\Lambda, \Lambda' \in RPP(\lambda)$ satisfying $g(\Lambda,\Lambda') = 0$ and having total volume $|\Lambda| + |\Lambda'|=n$ and reverse plane partitions of shape $\lambda$ and volume $n$. 
\end{result}

The organization of this paper is as follows: In Section \ref{sec1} we give some preliminary material covering the basic definitions and result we will need on integer partitions, reverse plane partitions, and five-vertex models. In Section \ref{sec:vertexRPP} we show how the RPP are in bijection with path configurations of a certain five-vertex model. We then use the Yang-Baxter integrability of the vertex model to give an alternate proof of the well-known  product formula for the generating function of reverse plane partition. In Section \ref{sec:multicolored} we introduced the multicolored vertex models that generalize the five-vertex model we used in the previous sections. We show that these multicolored vertex models can be used to construct a model of interacting pairs of reverse plane partitions. We then then use the integrability of the vertex model to prove our main result Theorem \ref{mainthm}. Finally, in Section \ref{sec:tto0} we study the limit when the strength of the interaction between the reverse plane partitions goes to zero. We give a bijection between pairs of interacting reverse plane partitions when the interaction strength goes to zero and a single reverse plane partition of the same shape.

\section{Preliminaries}\label{sec1}

In this section we give some combinatorial background and define reverse plane partitions. We also introduce the Yang-Baxter integrable five vertex models which we will show are in bijection with the reverse plane partitions. For more background on partitions, plane partitions, and reverse plane partitions, see \cite{Andrews,Stanley}.
The five-vertex models with discuss in this section are closely related to Schur processes \cite{borodin2016lectures} and dimer models \cite{boutillier2017dimers}. For a more general background on vertex models see \cite{reshetikhin2010lectures}.

\subsection{Partitions}

\subsubsection{Integer partitions and Young Diagram}

To begin we define integer partitions.
\begin{definition}
An \emph{integer partition} of $n\in \ZZ_{>0}$ is a way of writing the positive integer $n$ as a sum of other positive integers, where the order of the summands does not matter. 
\end{definition}

We call each of the summands a \emph{part} of the partition and write $\lambda$ as a decreasing sequence of its parts. That is, $\lambda = (\lambda_1,\lambda_2,\ldots,\lambda_\ell)$ where $\lambda_1\ge \lambda_2 \ge \ldots \lambda_\ell >0$ and  $\lambda_1+ \lambda_2 + \ldots \lambda_\ell = n$. We will refer to the number of parts $\ell$ as the \emph{length} of $\lambda$. Note that we may extend $\lambda$ to a infinite sequence by setting $\lambda_i=0$ for $i> \ell$. In this case,  the length of $\lambda$ is the number of non-zero parts. We define the \emph{size} of a partition by $|\lambda| = \sum_{i} \lambda_i$.

A \emph{Young diagram} is a pictorial representation of an integer partition. Taking a certain partition $\lambda$, the Young diagram is generated by taking the largest part $\lambda_1$ of $\lambda$ and placing $\lambda_1$ many blocks on the first row, for the second largest part $\lambda_2$ placing $\lambda_2$ many blocks on the second row, and continuing until all parts of the partition have been represented with blocks. We draw our Young diagrams in the \emph{French convention} where the first row is on the bottom and subsequent rows are stacked on top.

\begin{example}$\\$
The Young diagrams
\begin{center}
    $\young(~,~,~,~,~)$\hspace{55pt}
    $\young(~,~,~,~~)$ \hspace{55pt}
    $\young(~,~~,~~)$ \hspace{55pt}
    $\young(~,~,~~~)$ \hspace{55pt} \\\vspace{10pt}
    $\young(~~,~~~)$ \hspace{25pt}
    $\young(~,~~~~)$ \hspace{25pt}
    $\young(~~~~~~)$
\end{center}
correspond to the partitions
$$5 = 1 + 1 + 1 + 1 + 1 \hspace{40pt}
    5 = 1 + 1 + 1 + 2 \hspace{40pt}
    5 = 1 + 2 + 2 \hspace{40pt}
    5 = 1 + 1 + 3$$$$
    5 = 2 + 3 \hspace{28pt}
    5 = 1 + 4\hspace{25pt}
    5 = 5$$.
\end{example}
\noindent We call a box in the Young diagram a \emph{cell}.

\subsubsection{Interlacing Partitions}
We say that two partitions $\mu$ and $\lambda$ \emph{interlace} if
\begin{equation}
\lambda_1\ge \mu_1 \ge \lambda_2 \ge \mu_2 \ge \ldots
\end{equation}
and write $\mu \preceq \lambda $.  Equivalently, $\mu \preceq \lambda $ if and only if one can get the Young diagram of $\lambda$ from the Young diagram of $\mu$ by adding at most one cell to each column.  We say that the two partitions differ by a \emph{horizontal strip}.\\\\

\begin{example}
The partitions $\mu=(3,2,1,1)$ and $\lambda= (5,2,2,1,1)$ satisfy $\mu \preceq \lambda$ since we can get $\lambda$ from $\mu$ by adding at most one cell to each column. Below we draw the Young diagram of $\mu$ in gray and the horizontal strip you need to add to get $\lambda$ in white.
\[
\begin{ytableau}  
\empty \\
*(gray) \\
*(gray) & \empty \\
*(gray) &  *(gray)  \\
*(gray) &  *(gray) &  *(gray) & \empty & \empty \\
\end{ytableau}
\]
\end{example}

If we have two partitions $\mu$ and $\lambda$ such that the cells in the Young diagram of $\mu$ are a subset of the cells in the Young diagram of $\lambda$ then we can define the \emph{skew-partition} $\lambda/\mu$ as the Young diagram one gets by removing the cells of $\mu$ from the $\lambda$. The \emph{size} of a skew partition is denoted $|\lambda/\mu|$ and is given by $|\lambda/\mu| = |\lambda|-|\mu|$.

\subsubsection{Maya Diagram}
We can also represent partitions as certain sequences of particles and holes called \emph{Maya diagrams}. To understand Maya diagrams, it is helpful to first represent partitions in \emph{Russian convention}. Russian convention is a different perspective on Young diagrams, where we rotate the Young diagram 45 degrees to counter-clockwise.
\begin{example} We can represent the partition $8 = 4 + 3 + 1$ in the Russian convention as
\[
\begin{tikzpicture}[baseline=(current bounding box.center)]
\begin{scope}[scale = 0.707, xscale=-1, rotate=135]
\draw (0,0)--(1,0)--(1,1)--(3,1)--(3,2)--(4,2)--(4,3)--(0,3)--(0,0);
\draw (0,1)--(1,1);
\draw (0,2)--(3,2);
\draw (1,1)--(1,3);
\draw (2,1)--(2,3);
\draw (3,2)--(3,3);
\end{scope}
\end{tikzpicture}
\]
which in French convention is \\
\[
\begin{ytableau}  
\empty \\
\empty & \empty & \empty \\
\empty &  \empty &  \empty & \empty \\
\end{ytableau}.
\]
\end{example}
\begin{definition}
The \emph{Maya diagrams} for for the partition $\lambda$ is obtained by viewing the Young diagram of $\lambda$ in Russian convention then, starting from the left, drawing a particle for every step diagonally down and a hole for every step diagonally up. Note that we extend the bounding lines of the Young diagram out infinitely far to the left and right.
\end{definition}
\begin{definition}
    There exists a unique point on the Maya diagram where the number of particles to the right of this point is equal to the number of holes to the left of this point. We call this point the \emph{center} of the Maya diagram.
\end{definition}

\begin{example} The Maya diagram of the partition $12 = 4 + 3 + 2 + 2 + 1$ is the string of particles and holes underneath the Young diagram below, with the center drawn in red:
\[
\resizebox{8cm}{!}{
\begin{tikzpicture}[baseline=(current bounding box.center)]
\begin{scope}[rotate = 45, scale=1.414213]
\draw (0,0) grid (2,4); 
\draw (0,4) grid (1,5); 
\draw (2,0) grid (3,2); 
\draw (3,0) grid (4,1);
\node[scale=2] at (0.5,5) {$\circ$}; 
\node[scale=2] at (1,4.5) {$\bullet$}; \node[scale=2] at (1.5,4) {$\circ$}; 
\node[scale=2] at (2,3.5) {$\bullet$}; \node[scale=2] at (2,2.5) {$\bullet$}; \node[scale=2] at (2.5,2) {$\circ$}; 
\node[scale=2] at (3,1.5) {$\bullet$}; \node[scale=2] at (3.5,1) {$\circ$}; 
\node[scale=2] at (4,0.5) {$\bullet$}; 
\draw (4,0) grid (7,0); 
\draw (0,5) grid (0,8);
\node[scale=2] at (4.5,0) {$\circ$}; 
\node[scale=2] at (5.5,0) {$\circ$};  
\node[scale=2] at (0,5.5) {$\bullet$}; \node[scale=2] at (0,6.5) {$\bullet$}; 
\draw[ultra thick, red] (-1,-1)--(4,4);
\end{scope}
\node[scale=2] at (0.5,-1) {$\circ$};
\node[scale=2] at (1.5,-1) {$\bullet$}; 
\node[scale=2] at (2.5,-1) {$\circ$};
\node[scale=2] at (3.5,-1) {$\bullet$}; 
\node[scale=2] at (4.5,-1) {$\circ$};
\node[scale=2] at (5.5,-1) {$\circ$};
\node[scale=2] at (7,-1) {$\ldots$};
\node[scale=2] at (-0.5,-1) {$\bullet$};
\node[scale=2] at (-1.5,-1) {$\bullet$};
\node[scale=2] at (-2.5,-1) {$\circ$}; 
\node[scale=2] at (-3.5,-1) {$\bullet$};
\node[scale=2] at (-4.5,-1) {$\circ$}; 
\node[scale=2] at (-5.5,-1) {$\bullet$};
\node[scale=2] at (-6.5,-1) {$\bullet$};
\node[scale=2] at (-8,-1) {$\ldots$};
\end{tikzpicture} }
\]
\end{example}

\subsubsection{Hook lengths}
For a cell $c$ in row $i$ and column $j$ of the Young diagram of a partition $\lambda$. We define the \emph{arm} and \emph{leg} of $c$ to be
\begin{equation}
\begin{aligned}
\operatorname{arm}(c) &\;= \#\{ \text{cells to the right of $c$ in the same row} \} = \lambda_i-i \\
\operatorname{leg}(c) &\;= \#\{ \text{cells above $c$ in the same column} \} = \lambda'_j - j
\end{aligned}
\end{equation}
respectively. 

The \emph{hook length} of $c$, $h_{\lambda}(c)$ is given by
\begin{equation}
h_{\lambda}(c) =  \operatorname{arm}(c) + \operatorname{leg}(c) +1.
\end{equation}

\begin{example}
Consider the partition $\lambda = (4,3,1)$.
\[
\begin{ytableau}  
\empty \\
\empty & \empty & \empty \\
\empty &  *(YellowGreen) &  \empty & \empty \\
\end{ytableau}
\qquad \qquad
\begin{ytableau}  
1 \\
4 & 2 & 1 \\
6 &  4 &  3 & 1 \\
\end{ytableau}.
\]
The highlighted cell $c$ in the left diagram has $\operatorname{arm}(c)=2$, $\operatorname{leg}(c)=1$ and $h_{\lambda}(c) = 4$. On the right we fill each cell of $\lambda$ with the corresponding hook length.
\end{example}

\subsection{Plane Partitions}
Young diagrams can be thought of as a model for a one-dimensional partition as we can write them as a one-dimensional sequence $\lambda = (\lambda_1,\lambda_2,\ldots,\lambda_\ell)$. Expanding on this, one defines two-dimensional partitions known as \emph{plane partitions} (PP).
\begin{definition}
A \emph{plane partition} $\Pi$ of $n$ is a 2-D bi-infinite array of non-negative integers $(\Pi_{i,j})_{i,j=1}^\infty$ with sum $n$, such that rows are non-increasing from left to right and columns are non-increasing from bottom to top.
\end{definition}
We call the sum of the entries in the plane partition its \emph{volume}.
\begin{example}\label{PP1}
    Below are some plane partitions of volume $4$
    \begin{center}
    \begin{ytableau}
      \vdots & \vdots & \iddots \\
        0 & 0 & \ldots \\
        4 & 0 & \ldots 
    \end{ytableau} \hspace{25pt}
    \begin{ytableau}
       \vdots & \vdots & \ddots \\
        2 & 0 & \ldots \\
        2 & 0 & \ldots \\
        0 & 0 & \ldots 
    \end{ytableau} \hspace{25pt}
    \begin{ytableau}
    \vdots & \vdots & \iddots \\
        0 & 0 & \ldots \\
        1 & 0 & \ldots \\
        1 & 0 & \ldots \\
        2 & 0 & \ldots 
    \end{ytableau} \hspace{25pt}
    \begin{ytableau}
    \vdots & \vdots & \vdots & \iddots \\
        0 & 0 & 0 & \ldots \\
        1 & 0 & 0 & \ldots \\ 
        2 & 1 & 0 & \ldots
    \end{ytableau} \hspace{25pt}
    \begin{ytableau}
    \vdots & \vdots & \vdots & \vdots & \iddots \\
    	0 & 0 & 0 & 0 & \ldots \\
        2 & 1 & 1 & 0 & \ldots
    \end{ytableau} \hspace{25pt}
\end{center}
where one should extend the arrays infinitely far up and to the right with zeros.
\end{example}
 
An alternate way to represent the plane partition is as stacks of cubes.  To do so, on each entry of the array we stack the corresponding number of cubes, then view the array top-left. These stacks of cubes can also be thought of as lozenge tilings.
\begin{example}
Below are the respective lozenge tilings for the plane partitions in Example \ref{PP1}. We color code these lozenge tilings based on the orientation of the lozenges.\\
\begin{center}
    \begin{tikzpicture}
        \planepartition{{4}}
    \end{tikzpicture}\hspace{25pt}
    \begin{tikzpicture}
        \planepartition{{2,2}}
    \end{tikzpicture}\hspace{25pt}
    \begin{tikzpicture}
        \planepartition{{2,1,1}}
    \end{tikzpicture}\hspace{25pt}
    \begin{tikzpicture}
        \planepartition{{2,1}, {1}}
    \end{tikzpicture}\hspace{25pt}
    \begin{tikzpicture}
        \planepartition{{2},{1},{1}}
    \end{tikzpicture}
\end{center}
We omit drawing the stacks of height zero.
\end{example}

Rather than bi-infinite arrays, one can restrict the number of rows and columns of the PPs. A plane partitions $\Pi$ with $r$ rows and $c$ columns is an array of non-negative integers $(\Pi_{i,j})_{\substack{i=1,\ldots,r \\ j =1,\ldots, c}}$ such that rows are non-increasing from left to right and columns are non-increasing from bottom to top.

\subsubsection{Reverse Plane Partitions}
Our main object of interest is the reverse plane partition.
\begin{definition}
    Let $\lambda$ be a 1-dimensional partition. A \emph{reverse plane partition} (RPP) of shape $\lambda$ and \emph{volume} $n$ is an assignment of non-negative integers to the cells in the Young diagram of $\lambda$ such that the integers are non-decreasing from left to right in each row and non-decreasing from bottom to top in each column, and the sum of the integers is $n$.
\end{definition}
\begin{example}\label{rpp1}
    If $\lambda=(4,3,2)$, then one possible RPP of shape $\lambda$ and volume $18$ is given by
    \begin{center}
        \begin{ytableau}
            1 & 2\\
            1 & 2 & 4\\
            0 & 0 & 3 & 5
        \end{ytableau}.
    \end{center}
\end{example}
Note that there is a bijection between RPPs of rectangle shape and PPs with a restricted number of rows and columns, given by rotating the Young diagram by 180 degrees. For example
\begin{center}
    \begin{ytableau}
        3 & 3 & 6 & 6\\
        0 & 3 & 3 & 4\\
        0 & 0 & 2 & 2
    \end{ytableau} $\mapsto $
    \begin{ytableau}
        2 & 2 & 0 & 0\\
        4 & 3 & 3 & 0\\
        6 & 6 & 3 & 3
    \end{ytableau}.
\end{center}
Again, we have a bijection between RPPs and lozenge tilings. We can still consider the RPP as stacks of cubes as with PPs, but because the inequalities are switched, we must consider lozenge tilings as a view from the bottom left of the Young diagram instead of from the top right, in order to prevent stacks of cubes from obscuring the view of smaller stacks of cubes.
\begin{example}
    The RPP depicted in Example \ref{rpp1} can be represented as the lozenge tiling
    \begin{center}
        \begin{tikzpicture}[baseline=(current bounding box.center),scale=0.8]\reverseplanepartition{5}{{1,2},{1,2,4},{0,0,3,5}}\end{tikzpicture}.
    \end{center}
    We may think of these cubes as being in an infinitely tall room whose back wall is dettermined by the shape $\lambda$.
\end{example}
To fix some notation, denote the set of all plane partitions of shape $\lambda$ by $\operatorname{RPP}(\lambda)$ and for RPP $\Lambda$ denote is volume by $|\Lambda|$. If we fix the shape $\lambda$ the generating function of volume weighted RPP takes a particularly nice form.
\begin{theorem}[\cite{GANSNER198171}] \label{thm:genfunRPP}
If we fix $\lambda$  the the generating function of volume weighted RPP of shape $\lambda$ is given by
$$\sum_{\Lambda \in \operatorname{RPP}(\lambda)} q^{|\Lambda|} = \prod_{c \in \lambda} \frac{1}{1-q^{h_\lambda(c)}}$$
where the product is over all cells of the Young diagram of $\lambda$ and $h_{\lambda}(c)$ is the hook length of the cell $c$ in $\lambda$.
\end{theorem}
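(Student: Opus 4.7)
My plan is to prove Theorem~\ref{thm:genfunRPP} via the five-vertex model bijection set up in Section~\ref{sec:vertexRPP}, combined with the Yang-Baxter equation. First, I would slice an RPP $\Lambda \in \operatorname{RPP}(\lambda)$ along its diagonals to obtain an interlacing chain of partitions $\emptyset \preceq \mu^{(1)} \preceq \cdots \succeq \mu^{(N)} \succeq \emptyset$, encode each $\mu^{(i)}$ by its Maya diagram, and observe that $|\Lambda| = \sum_i |\mu^{(i)}|$, so that $q^{|\Lambda|}$ decomposes into a product of local Boltzmann weights. Each interlacing step $\mu^{(i)} \preceq \mu^{(i+1)}$ is then realized by a single row of the five-vertex model carrying a rapidity parameter specialized in terms of $q$. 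The sum $\sum_{\Lambda} q^{|\Lambda|}$ is thereby identified with the partition function $Z(\lambda;q)$ of this model on a finite domain whose shape and boundary conditions are dictated by the profile of $\lambda$: paths enter along one side at positions determined by $\lambda$ and exit along the opposite side.

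Second, I would use the Yang-Baxter equation in the standard ``train argument,'' conjugating by $R$-matrices to commute the rows of the vertex model past one another. Each such move leaves $Z(\lambda;q)$ invariant, but it permutes the rapidities and rearranges the crossings of the lattice paths inside the domain. After iterating these moves to exhaustion, $Z(\lambda;q)$ should factorize as a product of scalar contributions, one per surviving crossing. The crossings that remain are in natural bijection with the cells $c$ of the Young diagram of $\lambda$, and the geometric series associated with each crossing contributes a factor $\sum_{k\ge 0} q^{k\,h_\lambda(c)} = 1/(1-q^{h_\lambda(c)})$, producing the claimed product.

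The main obstacle is the rapidity bookkeeping in this last step: one needs to verify that the rapidity assigned to the crossing identified with $c$ is exactly $q^{h_\lambda(c)}$, and not some other monomial in $q$. To nail this down I would first check the single-row and single-column cases, where the formula reduces to $\prod_{i=1}^{\lambda_1}(1-q^i)^{-1}$ and $\prod_{i=1}^{\ell}(1-q^i)^{-1}$, in order to fix the rapidity convention; then I would induct on the number of rows of $\lambda$, peeling off the top row by a sequence of YBE moves and tracking how the hook lengths of the remaining diagram update. The cleanest way to see the appearance of $h_\lambda(c)$ is to interpret a crossing as a horizontal-step-plus-vertical-step pair whose path lengths sum to $\operatorname{arm}(c) + \operatorname{leg}(c) + 1$, matching the definition of the hook length exactly.
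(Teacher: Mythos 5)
Your overall strategy --- encode the RPP as an interlacing chain realized by rows of the five-vertex model, then use the Yang--Baxter equation to commute rows and extract one factor per cell of $\lambda$ --- is exactly the route the paper takes in Section \ref{sec:vertexRPP}. However, two load-bearing steps are missing or misstated. First, the mechanism that produces the product is not that ``each move leaves $Z$ invariant'' while the factors come from ``surviving crossings'': if every move preserved the partition function, nothing new could factor out. What actually happens (Lemma \ref{lem:whitegrayswap}) is that commuting a gray row (encoding a $\succeq$ step of the chain) downward past a white row (encoding a $\preceq$ step) multiplies the two-row partition function by $(1-xy)^{-1}$, where $x,y$ are the two row parameters. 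Proving this requires inserting a cross vertex at the left boundary (where the boundary conditions force it to be empty, so it has weight $1$), sliding it to the right with the YBE, and then arguing that one of the two terminal cross configurations has weight zero because it forces a path to travel through infinitely many columns each contributing a factor $xy$ with $|xy|<1$; only then can the cross be removed at cost $1-xy$. Your ``geometric series at each crossing'' is not how the factor arises, and your proposal never distinguishes the two kinds of rows whose commutation is being computed.

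Second, the rapidity bookkeeping you flag as the main obstacle is genuinely the heart of the proof and cannot be settled by a single-row sanity check plus an unexecuted induction. The paper resolves it directly: the gray row attached to row $\lambda_i$ of the diagram carries parameter $q^{\lambda_i-i+\ell+1}$ and the white row attached to column $\lambda_j'$ carries $q^{\lambda_j'-j-\ell}$, and the gray row for $\lambda_i$ swaps past the white row for $\lambda_j'$ precisely when $\lambda_i\ge j$, i.e.\ precisely when $(i,j)$ is a cell of $\lambda$. It is this pair of facts that puts the swaps in bijection with cells and makes each factor equal to $\bigl(1-q^{h_\lambda(c)}\bigr)^{-1}$. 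You would also need the normalization of Theorem \ref{thm:bij1color}: with row parameters $q^{\pm i}$ the configuration weight is $q^{|\Lambda|}/A_\lambda(q)$, not $q^{|\Lambda|}$, and the constant $A_\lambda(q)$ must be seen to cancel against the weight of the unique frozen configuration on the fully-commuted side. Until these points are supplied, the argument is a correct outline of the paper's proof rather than a proof.
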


\subsection{Five Vertex Models} \label{sec:5V}
Vertex models encompass a large class of models originally arising in statistical physics, but are now known to have deep connection to representation theory and, as we shall see, combinatorics. In broad strokes, consider the vertices $(i, j) \in \mathbb{Z}^2$. Suppose we have certain local configurations we can place over each vertex and that there are restrictions on which local configurations are allowed at neighboring vertices. Furthermore, we assign weights to the different local configurations. The types of local configurations along with the local restrictions and weights define the vertex model. 

While the above description is rather vague, it is in part because there are such a wide variety of vertex models that is hard to give a general definition. We would like to stress that the idea that the building blocks of our vertex model only ``interact" locally in both the constraints and the weights. For our purposes, it will be enough to focus on two types of five-vertex models, where ``five" indicates the number of basic local configurations. These five-vertex models can be seen as degenerations of the well-studied six-vertex model, see \cite{reshetikhin2010lectures} and references therein. Moreover, there are the 1-color version of the multicolored vertex models introduced in \cite{aggarwal2023colored,LLT}. We will use the multi colored generalization in Section \ref{sec:multicolored}.

Our vertex models will consist of up-right paths with certain constraints on their allowed steps.  For the first of our vertex models, the five allowed local configurations, and their corresponding weight, is shown in Figure \ref{fig:whiteweight}. We draw the vertices with a white background (as opposed to the other five-vertex model which we will draw with a gray background) and call these the \emph{white vertices}.
\begin{figure}[h]
\[
\begin{tabular}{cccccc}
 \begin{tikzpicture}[baseline=(current bounding box.center)]\draw[thin] (0,0) rectangle (1,1); \node at (0.5,0.5) {$x$};\end{tikzpicture}: &   \begin{tikzpicture}[baseline=(current bounding box.center)]\draw[thin] (0,0) rectangle (1,1);\end{tikzpicture} & \begin{tikzpicture}[baseline=(current bounding box.center)]\draw[thin] (0,0) rectangle (1,1);\draw[black, very thick] (0.5,0)--(0.5,0.5)--(1,0.5);\end{tikzpicture} & \begin{tikzpicture}[baseline=(current bounding box.center)] \draw[thin] (0,0) rectangle (1,1); \draw[black, very thick] (0,0.5)--(1,0.5); \end{tikzpicture} & \begin{tikzpicture}[baseline=(current bounding box.center)] \draw[thin] (0,0) rectangle (1,1); \draw[black, very thick] (0.5,0)--(0.5,1); \end{tikzpicture} & \begin{tikzpicture}[baseline=(current bounding box.center)] \draw[thin] (0,0) rectangle (1,1); \draw[black, very thick] (0,0.5)--(0.5,0.5)--(0.5,1); \end{tikzpicture} \\
$\text{Weight}:$ & $1$ & $x$ & $x$ & $1$ & $1$ \\
\end{tabular}
\]
\caption{Allowed vertices and their weights for the white vertices.}
\label{fig:whiteweight}
\end{figure}
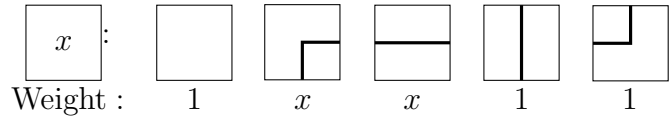\\

We will want to consider all possible configuration $\mathcal{C}$ of the vertex model on some domain with fixed boundary conditions (assignment of paths entering and exiting at each point on the boundary of the domain). The weight of a configuration is 
\[
w(\mathcal{C}) := \prod_{\text{vertices in }\mathcal{C}} (\text{weight of $v$}).
\]
The \emph{partition function} $Z$ of the vertex model is given by
\[
Z = \sum_{\text{all config. } \mathcal{C}} w(\mathcal{C}).
\]

To connect this to the theory of integer partitions, consider an infinite row of the vertex model with fixed boundary conditions on the top and bottom, and no paths entering or exiting from the sides. We may view the locations of where the paths enter the bottom boundary as the location of particles and the locations without paths entering as the locations of holes.
In this way, we may associate the Maya diagram of a partition $\mu$ to the bottom boundary condition. With the same mapping we get a Maya diagram of a partition $\lambda$ associated to the top boundary condition as well. Note that the allowed vertices enforce the constraint that each particle must be connected to the closest particle on top or to the right of itself. One can show that the row has a valid configuration of paths satisfying the boundary condition iff $\mu \preceq \lambda$. Moreover, if there is a valid configuration it is unique. See Example \ref{ex:row1}. 
\begin{example}\label{ex:row1}
Let $\mu=(1,1)$ and $\lambda=(3,1,1)$. Then the row with bottom boundary condition $\mu$ and to boundary condition $\lambda$ (with no paths entering or exiting on the sides) has a unique valid path configuration given by
\[
\begin{tikzpicture}[baseline=(current bounding box.center)]
\draw (-1,0) grid (9,1);
\node at (-0.5,0.5) {$\ldots$};
\node at (8.5,0.5) {$\ldots$};

\draw[fill=black] (-0.5,0) circle (3pt);
\draw[fill=black] (0.5,0) circle (3pt);
\draw[fill=black] (1.5,0) circle (3pt);
\draw[fill=white] (2.5,0) circle (3pt);
\draw[fill=black] (3.5,0) circle (3pt);
\draw[fill=black] (4.5,0) circle (3pt);
\draw[fill=white] (5.5,0) circle (3pt);
\draw[fill=white] (6.5,0) circle (3pt);
\draw[fill=white] (7.5,0) circle (3pt);
\draw[fill=white] (8.5,0) circle (3pt);

\draw[fill=black] (-0.5,1) circle (3pt);
\draw[fill=black] (0.5,1) circle (3pt);
\draw[fill=white] (1.5,1) circle (3pt);
\draw[fill=black] (2.5,1) circle (3pt);
\draw[fill=black] (3.5,1) circle (3pt);
\draw[fill=white] (4.5,1) circle (3pt);
\draw[fill=white] (5.5,1) circle (3pt);
\draw[fill=black] (6.5,1) circle (3pt);
\draw[fill=white] (7.5,1) circle (3pt);
\draw[fill=white] (8.5,1) circle (3pt);

\draw[very thick] (0.5,0)--(0.5,1);
\draw[very thick] (1.5,0)--(1.5,0.5)--(2.5,0.5)--(2.5,1);
\draw[very thick] (3.5,0)--(3.5,1);
\draw[very thick] (4.5,0)--(4.5,0.5)--(6.5,0.5)--(6.5,1);

\draw[fill=white] (-1,0.5) circle (3pt);
\draw[fill=white] (9,0.5) circle (3pt);
\draw[red,fill=red] (4,0) circle (1pt);
\draw[red,fill=red] (4,1) circle (1pt);
\end{tikzpicture}
\]
where the red dots, indicate the center of the Maya diagrams corresponding to the top and bottom boundary conditions. The total weight of the row is $x^3$.
\end{example}

\begin{remark}\label{rmk:biinf}
In terms of the partitions, allowing the row to be infinitely long to the right means we are placing no restriction on the size of the parts of the partitions (other than that placed by the interlacing requirement). Allowing the row to be infinitely long to the left means we are placing no restriction on the length of the partition. In the what follows, we will restrict the number of columns to the left of the center of the Maya diagrams, corresponding to restricting the allowed length of the partitions.
\end{remark}

Our second vertex model, has its five allowed vertices and their associated weight given in Figure \ref{fig:grayweight}. Note that to distinguish them from the previous vertex model we shade the gray and refer to them as \emph{gray vertices}.  
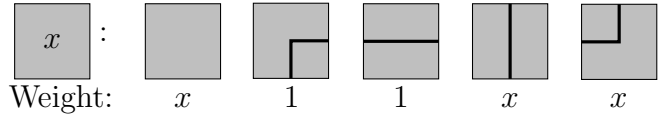
\begin{figure}[h]
\[
\begin{tabular}{cccccc}
  \begin{tikzpicture}[baseline=(current bounding box.center)]\draw[thin,fill=lightgray] (0,0) rectangle (1,1); \node at (0.5,0.5) {$x$}; \end{tikzpicture}\;: & \begin{tikzpicture}[baseline=(current bounding box.center)]\draw[thin,fill=lightgray] (0,0) rectangle (1,1);\end{tikzpicture} & \begin{tikzpicture}[baseline=(current bounding box.center)]\draw[thin,fill=lightgray] (0,0) rectangle (1,1);\draw[very thick] (0.5,0)--(0.5,0.5)--(1,0.5);\end{tikzpicture} & \begin{tikzpicture}[baseline=(current bounding box.center)] \draw[thin,fill=lightgray] (0,0) rectangle (1,1); \draw[very thick] (0,0.5)--(1,0.5); \end{tikzpicture} & \begin{tikzpicture}[baseline=(current bounding box.center)] \draw[thin,fill=lightgray] (0,0) rectangle (1,1); \draw[very thick] (0.5,0)--(0.5,1); \end{tikzpicture} & \begin{tikzpicture}[baseline=(current bounding box.center)] \draw[thin,fill=lightgray] (0,0) rectangle (1,1); \draw[very thick] (0,0.5)--(0.5,0.5)--(0.5,1); \end{tikzpicture} \\
\text{Weight:} &  $x$ & $1$ & $1$ & $x$ & $x$ \\
\end{tabular}
\]
\caption{The allowed vertices and their weights for the gray vertices.}
\label{fig:grayweight}
\end{figure}

These weights can be seen as the complement of the previous vertex model in which vertices previously having weight $1$ are now of weight $x$, and those of weight $x$ are now weight $1$.
Let $v$ be one of the five allowed vertices in either model, and define
\begin{equation} \label{eq:Lmat}
\begin{aligned}
L_x(v) =&\; \text{ the weight of the vertex $v$ in the white vertex model with parameter $x$} \\
L_x'(v) =&\; \text{ the weight of the vertex $v$ in the gray vertex model with parameter $x$}.
\end{aligned}
\end{equation}
We have  $L'_x(v) = x \; L_{1/x}(v)$.

Again we will consider a single row of the vertex model where, as before, we associate a partition $\mu$ through its Maya diagram to the bottom boundary conditions and partition $\lambda$ to the top boundary conditions. Unlike the white vertices, however, a row of the gray vertices will have the following properties:
\begin{itemize}
\item We center the Maya diagram on top one column to the left of the one on the bottom.
\item The row is no longer infinite to the left. Instead, it will have some fixed number of columns $\ell$ to the left of the center of the Maya diagram corresponding to the top boundary condition. This corresponds to restricting $\lambda$ to have length at most $\ell$. The row will still be infinite to the right.
\item As before, no path will enter from the left of a row, but we will force that a path does exit on the right.
\end{itemize}
See Example \ref{ex:row2} for an example. In this case, we have that there is a valid configuration iff $\lambda\preceq\mu$ and this configuration is unique.

\begin{example}\label{ex:row2}
Let $\mu=(3,1,1)$, $\lambda=(2,1)$, and $\ell=4$. Then the row with bottom boundary condition $\mu$ and top boundary condition $\lambda$ has a unique valid path configuration given by
\[
\begin{tikzpicture}[baseline=(current bounding box.center)]
\draw[fill=lightgray] (-1,0) rectangle (9,1);
\draw (-1,0) grid (9,1);

\node at (8.5,0.5) {$\ldots$};

\draw[fill=black] (-0.5,1) circle (3pt);
\draw[fill=black] (0.5,1) circle (3pt);
\draw[fill=white] (1.5,1) circle (3pt);
\draw[fill=black] (2.5,1) circle (3pt);
\draw[fill=white] (3.5,1) circle (3pt);
\draw[fill=black] (4.5,1) circle (3pt);
\draw[fill=white] (5.5,1) circle (3pt);
\draw[fill=white] (6.5,1) circle (3pt);
\draw[fill=white] (7.5,1) circle (3pt);
\draw[fill=white] (8.5,1) circle (3pt);

\draw[fill=black] (-0.5,0) circle (3pt);
\draw[fill=black] (0.5,0) circle (3pt);
\draw[fill=white] (1.5,0) circle (3pt);
\draw[fill=black] (2.5,0) circle (3pt);
\draw[fill=black] (3.5,0) circle (3pt);
\draw[fill=white] (4.5,0) circle (3pt);
\draw[fill=white] (5.5,0) circle (3pt);
\draw[fill=black] (6.5,0) circle (3pt);
\draw[fill=white] (7.5,0) circle (3pt);
\draw[fill=white] (8.5,0) circle (3pt);

\draw[very thick] (-0.5,0)--(-0.5,1);
\draw[very thick] (0.5,0)--(0.5,1);
\draw[very thick] (2.5,0)--(2.5,1);
\draw[very thick] (3.5,0)--(3.5,0.5)--(4.5,0.5)--(4.5,1);
\draw[very thick] (6.5,0)--(6.5,0.5)--(8.2,0.5);
\draw[very thick] (8.75,0.5)--(9,0.5);

\draw[fill=white] (-1,0.5) circle (3pt);
\draw[fill=black] (9,0.5) circle (3pt);
\draw[red,fill=red] (4,0) circle (1pt);
\draw[red,fill=red] (3,1) circle (1pt);
\end{tikzpicture}
\]
The weight of this row is $x^6$.
\end{example}

We summarize the above in the following lemma.
\begin{lemma}\label{lem:rowweights}
For a white row with bottom boundary given by $\mu$, top boundary given by $\lambda \succeq \mu$, and no paths entering or exiting on the sides, the weight of the row is given by
\[
x^{|\lambda|-|\mu|}.
\]
For a gray row with bottom boundary given by $\mu$, top boundary given by $\lambda \preceq \mu$, no paths entering on the left and a path entering on the right, the weight of the row is given by
\[
x^{|\mu|-|\lambda| + \ell}
\]
where $\ell$ is the number of columns to the left of the center of the top Maya diagram or, equivalently, the number of paths exiting the row at the top.
\end{lemma}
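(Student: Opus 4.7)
I plan to prove both assertions by noting that, in each vertex model, the weight factor at a vertex is controlled by whether its right-going horizontal edge carries a path, and then by counting those edges combinatorially from the unique path configuration determined by the boundary data.

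For the white row, Figure \ref{fig:whiteweight} shows that a vertex contributes the factor $x$ exactly when its right-going horizontal edge is occupied by a path, so the row weight equals $x^{H}$ with $H$ the total number of horizontal edges used. The hypothesis $\mu\preceq\lambda$ gives a unique non-crossing up-right configuration in which the $i$-th bottom particle (at Maya position $\mu_i - i$) is matched to the $i$-th top particle (at position $\lambda_i - i$). The horizontal displacement of that path is $\lambda_i - \mu_i \ge 0$, and summing over $i$ yields $H = \sum_i (\lambda_i - \mu_i) = |\lambda|-|\mu|$; only finitely many summands are nonzero, so the infinite extent of the row is harmless.

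For the gray row, Figure \ref{fig:grayweight} shows the complementary pattern: a vertex contributes $x$ precisely when its right-going horizontal edge is \emph{un}occupied. Far to the right, the single exiting path travels horizontally through every vertex (horizontal-straight, weight $1$), so only finitely many vertices contribute $x$. I will truncate to a finite window of $K$ columns covering all the nontrivial activity and write the weight as $x^{K-H}$. Non-crossingness together with the fact that $\lambda\preceq\mu$ yields one more bottom particle than top particle forces the rightmost bottom particle to be the one exiting on the right; the remaining bottom particles then pair in order with the top particles. Writing $H$ as the sum of the non-exit displacements $\sum_{i=1}^{\ell}(p^{\lambda}_i - p^{\mu}_{i+1})$ plus the linear-in-$K$ exit displacement, the $K$-dependence cancels and leaves $K - H$ as an explicit combination of $\ell$ and the particle-position sums $\sum_i p^{\mu}_i$, $\sum_i p^{\lambda}_i$.

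The final step is to evaluate these sums. Using the stated one-step leftward shift of the top center relative to the bottom center, the bottom particles in the window sit at $p^{\mu}_i = \mu_i - i + \tfrac12$ for $i = 1,\dots,\ell+1$ and the top particles at $p^{\lambda}_i = \lambda_i - i - \tfrac12$ for $i = 1,\dots,\ell$. A short calculation gives $\sum_i p^{\mu}_i = |\mu| - (\ell+1)^2/2$ and $\sum_i p^{\lambda}_i = |\lambda| - \ell(\ell+2)/2$, and substituting produces $K - H = |\mu|-|\lambda|+\ell$, as required. The main obstacle will be the careful bookkeeping in the gray case---tracking the one-step shift between centers, correctly isolating which bottom particle exits on the right, and confirming that the infinite tail leaves no residual contribution---after which the arithmetic collapses exactly as stated.
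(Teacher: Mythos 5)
Your proposal is correct and follows essentially the same reasoning the paper relies on (the paper states this lemma as a summary of the preceding discussion and Examples \ref{ex:row1}--\ref{ex:row2} rather than giving a formal proof): weight $x$ per occupied right-going edge for white vertices, the complementary rule for gray vertices, and the order-preserving matching of Maya particles forced by disjointness of the up-right paths. Your extra bookkeeping for the gray case --- identifying the rightmost bottom particle as the one exiting right, and regularizing the infinite row by a finite window of $K$ columns in which the $K$-dependence cancels --- is a valid and welcome elaboration of the counting that the paper leaves implicit.
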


\subsubsection{Yang Baxter Equation}
The advantage of using the vertex model framework is that our vertex models are \emph{Yang-Baxter integrable}, that is, the satisfy the Yang-Baxter equation (YBE). To describe the YBE we will also need a new type of vertex we call a \emph{cross vertex}. The allowed local configurations and their weights are given in Figure \ref{fig:YBEWeights}.

\begin{figure}[h!]
\[
  \begin{tabular}{cccccc}
     \begin{tikzpicture}[baseline=(current bounding box.center)] \draw[thin] (0,1)--(1,0);\draw[thin] (0,0)--(1,1); \node[right] at (0.5,0.5) {$z$}; \end{tikzpicture}: &    
     \begin{tikzpicture}[baseline=(current bounding box.center)] \draw[ultra thick] (0,1)--(1,0);\draw[thin] (0,0)--(1,1); \end{tikzpicture} & 
     \begin{tikzpicture}[baseline=(current bounding box.center)] \draw[ultra thick] (0,1)--(0.5,0.5)--(1,1);\draw[thin] (0,0)--(0.5,0.5)--(1,0); \end{tikzpicture} &
      \begin{tikzpicture}[baseline=(current bounding box.center)] \draw[thin] (0,1)--(0.5,0.5)--(1,1);\draw[ultra thick] (0,0)--(0.5,0.5)--(1,0); \end{tikzpicture} & 
      \begin{tikzpicture}[baseline=(current bounding box.center)] \draw[ultra thick] (0,1)--(1,0);\draw[ultra thick] (0,0)--(1,1); \end{tikzpicture} & 
      \begin{tikzpicture}[baseline=(current bounding box.center)] \draw[thin] (0,1)--(1,0);\draw[thin] (0,0)--(1,1); \end{tikzpicture}  \\
         \text{Weight:} & $1-z$ & $z$ & $1$ & $z$ & $1$
    \end{tabular}
\]
\caption{Cross Weights}
\label{fig:YBEWeights}
\end{figure}
We can now state the YBE.
\begin{theorem}
For any choice of  $i_1,i_2,i_3,j_1,j_2,j_3\in\{0,1\}$ with $1$ indicating a path present at that boundary and $0$ indicating no path. We have the following equality of of partition functions
\[
\begin{tikzpicture}[baseline=(current bounding box.center)] 
 \draw (-1,0.5) -- (0,1.5); \draw (-1,1.5) -- (0,0.5); 
 \node[right] at (-0.45,1) {$\frac{y}{x}$};
 \draw[step=1.0,black,thin] (0,0) grid (1,2); 
 \node[left] at (-1,1.5) {$i_1$}; \node[left] at (-1,0.5) {$i_2$}; \node[below] at (0.5,0) {$i_3$};
 \node[right] at (1,1.5) {$j_2$}; \node[right] at (1,0.5) {$j_1$}; \node[above] at (0.5,2) {$j_3$};
 \node at (0.5,0.5) {$x$}; \node at (0.5,1.5) {$y$};
 \end{tikzpicture} 
 =
 \begin{tikzpicture}[baseline=(current bounding box.center)] 
 \draw (2,0.5) -- (1,1.5); \draw (2,1.5) -- (1,0.5); 
  \node[right] at (1.55,1) {$\frac{y}{x}$};
 \draw[step=1.0,black,thin] (0,0) grid (1,2); 
 \node[left] at (0,1.5) {$i_1$}; \node[left] at (0,0.5) {$i_2$}; \node[below] at (0.5,0) {$i_3$};
 \node[right] at (2,0.5) {$j_1$}; \node[right] at (2,1.5) {$j_2$}; \node[above] at (0.5,2) {$j_3$};
 \node at (0.5,0.5) {$y$}; \node at (0.5,1.5) {$x$};
 \end{tikzpicture}.
 \] 
In other words, for any given boundary conditions $i_1,i_2,i_3,j_1,j_2,j_3$, the sum of the weights of all path configurations satisfying the boundary conditions on the LHS above is equal to the sum of weights of all path configurations satisfying the same boundary conditions on the RHS.  
\end{theorem}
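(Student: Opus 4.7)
The plan is to verify the Yang-Baxter equation by direct case analysis on the six boundary bits $(i_1,i_2,i_3,j_1,j_2,j_3)$, exploiting the strong conservation law built into the model: every allowed vertex, white square or cross, has the same number of occupied incoming edges (left and bottom) as occupied outgoing edges (right and top). Both sides of the identity therefore vanish unless $i_1+i_2+i_3=j_1+j_2+j_3$, reducing the $2^6=64$ potential boundary assignments to the $\sum_{k=0}^{3}\binom{3}{k}^2=20$ configurations with matching total path count $k$ on the left and right.

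For each of these 20 cases, the LHS is a sum of products of three vertex weights (one cross and two white) over the two internal edges connecting the cross to the $2\times 1$ grid; the RHS is an analogous sum over the internal edges in the other arrangement. The extreme cases are immediate: $k=0$ gives $1=1$, and $k=3$ vanishes on both sides since either arrangement would force some grid square to have two occupied incoming edges, violating the five-vertex rule. The content lies in the cases $k=1$ and $k=2$, where a handful of internal configurations contribute on each side and the comparison reduces to a short polynomial identity in $x$ and $y$ which holds because $z=y/x$. For instance, at $i=(1,0,0)$ and $j=(0,1,0)$ the LHS splits according to the two cross options at $(i_1,i_2)=(1,0)$ and collapses to $y(1-z)+yz=y$, while on the RHS the only compatible internal state contributes a single term $xz=y$.

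The main obstacle is bookkeeping rather than algebra: one must carefully track which cross diagonal output feeds which grid cell (and similarly on the RHS), and systematically enumerate the internal configurations compatible both with the fixed boundary and with the five-vertex constraints. In a clean write-up I would organize the checks in a small table listing, for each non-trivial boundary, the contributing internal states on each side and the resulting weights; after the trivial cases are discarded, only a handful of genuine identities remain, each an immediate consequence of $z=y/x$ combined with the fact that the nonzero white weights are either $1$ or the spectral parameter itself.
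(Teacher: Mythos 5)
Your proposal is correct and takes essentially the same route as the paper, whose proof consists of the single sentence that the identity can be verified by exhaustively checking each case; your use of path conservation to cut the $64$ boundary assignments down to $20$, and your worked example $i=(1,0,0)$, $j=(0,1,0)$, match the paper's own illustrative example. No gaps.
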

 \begin{proof}
 This can be shown by exhaustively checking each case. 
 \end{proof}
 
 \begin{example}
Suppose we set $i_1=j_2=1$ and the rest to zero. Then the YBE says we have the following equality of partition functions:
\[
 \begin{tikzpicture}[baseline=(current bounding box.center)] 
 \draw (-1,0.5) -- (0,1.5); \draw (-1,1.5) -- (0,0.5); 
 \node[right] at (-0.45,1) {$\frac{y}{x}$};
 \draw[step=1.0,black,thin] (0,0) grid (1,2); 
\draw[fill=black] (-1,1.5) circle (3pt); 
\draw[fill=white]  (-1,0.5) circle (3pt); 
\draw[fill=white]  (0.5,0) circle (3pt);
\draw[fill=black]  (1,1.5) circle (3pt);
\draw[fill=white]  (1,0.5) circle (3pt);
\draw[fill=white] (0.5,2) circle (3pt);
 \node at (0.5,0.5) {$x$}; \node at (0.5,1.5) {$y$};
 \end{tikzpicture} 
=
 \begin{tikzpicture}[baseline=(current bounding box.center)] 
 \draw (2,0.5) -- (1,1.5); \draw (2,1.5) -- (1,0.5); 
  \node[right] at (1.55,1) {$\frac{y}{x}$};
 \draw[step=1.0,black,thin] (0,0) grid (1,2); 
\draw[fill=black]  (0,1.5) circle (3pt);
\draw[fill=white]  (0,0.5) circle (3pt); 
\draw[fill=white] (0.5,0) circle (3pt);
 \draw[fill=white] (2,0.5) circle (3pt); 
 \draw[fill=black] (2,1.5) circle (3pt); 
 \draw[fill=white] (0.5,2) circle (3pt);
 \node at (0.5,0.5) {$y$}; \node at (0.5,1.5) {$x$};
 \end{tikzpicture}
 \]
 Explicitly writing the sum over path configurations, this becomes
 \[
 w\left(
 \begin{tikzpicture}[baseline=(current bounding box.center)] 
 \draw (-1,0.5) -- (0,1.5); \draw (-1,1.5) -- (0,0.5); 
 \draw[step=1.0,black,thin] (0,0) grid (1,2); 
 
 \draw[very thick] (-1,1.5)--(0,0.5)--(0.5,0.5)--(0.5,1.5)--(1,1.5);
 
\draw[fill=black] (-1,1.5) circle (3pt); 
\draw[fill=white]  (-1,0.5) circle (3pt); 
\draw[fill=white]  (0.5,0) circle (3pt);
\draw[fill=black]  (1,1.5) circle (3pt);
\draw[fill=white]  (1,0.5) circle (3pt);
\draw[fill=white] (0.5,2) circle (3pt);
 \end{tikzpicture} 
 \right)
 +
  w\left(
 \begin{tikzpicture}[baseline=(current bounding box.center)] 
 \draw (-1,0.5) -- (0,1.5); \draw (-1,1.5) -- (0,0.5); 
 \draw[step=1.0,black,thin] (0,0) grid (1,2); 
 
  \draw[very thick] (-1,1.5)--(-0.5,1)--(0,1.5)--(1,1.5);
 
\draw[fill=black] (-1,1.5) circle (3pt); 
\draw[fill=white]  (-1,0.5) circle (3pt); 
\draw[fill=white]  (0.5,0) circle (3pt);
\draw[fill=black]  (1,1.5) circle (3pt);
\draw[fill=white]  (1,0.5) circle (3pt);
\draw[fill=white] (0.5,2) circle (3pt);
 \end{tikzpicture} 
 \right)
=
w\left(
 \begin{tikzpicture}[baseline=(current bounding box.center)] 
 \draw (2,0.5) -- (1,1.5); \draw (2,1.5) -- (1,0.5); 
 \draw[step=1.0,black,thin] (0,0) grid (1,2); 
 
 \draw[very thick] (0,1.5)--(1,1.5)--(1.5,1)--(2,1.5);
 
\draw[fill=black]  (0,1.5) circle (3pt);
\draw[fill=white]  (0,0.5) circle (3pt); 
\draw[fill=white] (0.5,0) circle (3pt);
 \draw[fill=white] (2,0.5) circle (3pt); 
 \draw[fill=black] (2,1.5) circle (3pt); 
 \draw[fill=white] (0.5,2) circle (3pt);
 \end{tikzpicture}
 \right)
 \]
 where we see there are two ways to fill in the paths on the left but only one on the right. Computing the weights we have
 \[
 \begin{aligned}
&\text{LHS:} \qquad \left(1-\frac{y}{x}\right)\cdot y + \frac{y}{x}\cdot y = y\\
&\text{RHS:} \qquad x\cdot \frac{y}{x} = y
 \end{aligned}
 \]
 which are in fact equal.
\end{example}

As the gray vertices differ from the white vertices by a change of variables, there is also a YBE between white and gray vertices.

\begin{cor}\label{cor:YBE}
For any choice of  $i_1,i_2,i_3,j_1,j_2,j_3\in\{0,1\}$ with $1$ indicating a path present at that boundary and $0$ indicating no path. We have the following equality of of partition functions
\[
\begin{tikzpicture}[baseline=(current bounding box.center)] 
 \draw (-1,0.5) -- (0,1.5); \draw (-1,1.5) -- (0,0.5); 
 \node[right] at (-0.5,1) {$yx$};
 \draw[step=1.0,black,thin] (0,0) grid (1,2); 
 \draw[fill=lightgray] (0,0) rectangle (1,1);
 \node[left] at (-1,1.5) {$i_1$}; \node[left] at (-1,0.5) {$i_2$}; \node[below] at (0.5,0) {$i_3$};
 \node[right] at (1,1.5) {$j_2$}; \node[right] at (1,0.5) {$j_1$}; \node[above] at (0.5,2) {$j_3$};
 \node at (0.5,0.5) {$x$}; \node at (0.5,1.5) {$y$};
 \end{tikzpicture} 
 =
 \begin{tikzpicture}[baseline=(current bounding box.center)] 
 \draw (2,0.5) -- (1,1.5); \draw (2,1.5) -- (1,0.5); 
  \node[right] at (1.55,1) {$yx$};
 \draw[step=1.0,black,thin] (0,0) grid (1,2); 
 \draw[fill=lightgray] (0,1) rectangle (1,2);
 \node[left] at (0,1.5) {$i_1$}; \node[left] at (0,0.5) {$i_2$}; \node[below] at (0.5,0) {$i_3$};
 \node[right] at (2,0.5) {$j_1$}; \node[right] at (2,1.5) {$j_2$}; \node[above] at (0.5,2) {$j_3$};
 \node at (0.5,0.5) {$y$}; \node at (0.5,1.5) {$x$};
 \end{tikzpicture}.
 \] 
 where the weights for the white vertices are those given in Figure \ref{fig:whiteweight} and the weights of the gray vertices are those given in Figure \ref{fig:grayweight}, and the weights for the cross vertices are given in Figure \ref{fig:YBEWeights} with $z=yx$. 
\end{cor}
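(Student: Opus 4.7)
The plan is to deduce the mixed white--gray YBE directly from the all-white YBE of the preceding theorem via a simple change of variables. The key input is the relation $L'_x(v) = x \, L_{1/x}(v)$ from equation \eqref{eq:Lmat}, which tells us that the weight of any gray vertex at parameter $x$ differs from the weight of the same white vertex at parameter $1/x$ by a uniform factor of $x$. Since the five allowed local configurations are the same in the white and gray models (compare Figures \ref{fig:whiteweight} and \ref{fig:grayweight}), any identity of partition functions in the white model transforms into a corresponding identity in the mixed model under this reweighting.

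First I would instantiate the already-established YBE with the bottom row parameter set to $1/x$ and the top row parameter kept as $y$. The cross parameter, which in the original theorem is the ratio of top to bottom row parameters, then becomes $y/(1/x) = yx$, matching the cross parameter in Corollary \ref{cor:YBE}; the cross weights in Figure \ref{fig:YBEWeights} depend only on this single ratio, so they are correct on both sides with no further adjustment.

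Next I would multiply the resulting identity through by an overall factor of $x$. Using $L'_x(v) = x \, L_{1/x}(v)$, this factor converts the single white vertex of parameter $1/x$ on the left hand side (which sits on the bottom, exactly where we want the gray row) into a gray vertex of parameter $x$. On the right hand side of the original YBE the two row parameters are swapped, so the white vertex of parameter $1/x$ now sits on top, and the same factor of $x$ turns it into the top gray vertex of parameter $x$ appearing in the statement. The remaining white vertex of parameter $y$ and the cross vertex of parameter $yx$ are untouched on both sides, so the reweighted identity is precisely the content of Corollary \ref{cor:YBE}.

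The main thing to watch is the bookkeeping: one must confirm that the sum over path configurations on each side runs over the same set before and after the substitution (which follows because the allowed local configurations do not depend on the vertex parameter), and that the single factor of $x$ is unambiguously absorbed into the row that becomes gray on each side. I do not anticipate any genuine obstacle beyond this accounting, as no case analysis of the individual vertices is required once the white--white YBE is in hand.
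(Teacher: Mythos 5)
Your proposal is correct and is exactly the argument the paper intends: the paper gives no explicit proof but justifies the corollary by the remark that gray vertices differ from white ones by the change of variables $L'_x(v) = x\,L_{1/x}(v)$, which is precisely the substitution $x \mapsto 1/x$ followed by multiplication by $x$ that you carry out. Your bookkeeping of the cross parameter ($y/(1/x) = yx$) and of which row absorbs the factor of $x$ on each side is accurate.
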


\section{A Vertex Model for Reverse Plane Partition } \label{sec:vertexRPP}
By combining several rows of these vertex models, we can create a vertex model representation of the RPP. We will first see how this is constructed by looking at an example.
\begin{example}\label{ex:RPPvertex}
Consider the following RPP of shape $\lambda=(4,3,1)$
\[
\begin{tikzpicture}[baseline=(current bounding box.center)]
\begin{scope}[scale = 0.707, xscale=-1, rotate=135]
\draw (0,0)--(1,0)--(1,1)--(3,1)--(3,2)--(4,2)--(4,3)--(0,3)--(0,0);
\draw (0,1)--(1,1);
\draw (0,2)--(3,2);
\draw (1,1)--(1,3);
\draw (2,1)--(2,3);
\draw (3,2)--(3,3);
\end{scope}
\node at (2,0.5) {$4$}; \node at (3,0.5) {$4$};
\node at (0.5,0) {$3$}; \node at (1.5,0) {$1$}; \node at (2.5,0) {$3$};
\node at (1,-0.5) {$1$}; \node at (2,-0.5) {$1$};
\node at (1.5,-1) {$0$};
\draw[densely dotted, black] (0.5,-1.5)--(0.5,1.5);
\draw[densely dotted, black] (1,-1.5)--(1,1.5);
\draw[densely dotted, black] (1.5,-1.5)--(1.5,1.5);
\draw[densely dotted, black] (2,-1.5)--(2,1.5);
\draw[densely dotted, black] (2.5,-1.5)--(2.5,1.5);
\draw[densely dotted, black] (3,-1.5)--(3,1.5);
\draw[red] (1.5,-1.75)--(1.5,-2.25);

\draw[] (0.25,-2) circle (3pt);
\draw[fill=black] (0.75,-2) circle (3pt);
\draw[] (1.25,-2) circle (3pt);
\draw[] (1.75,-2) circle (3pt);
\draw[fill=black] (2.25,-2) circle (3pt);
\draw[] (2.75,-2) circle (3pt);
\draw[fill=black] (3.25,-2) circle (3pt);
\end{tikzpicture}
\]
where below we draw the Maya diagram for $\lambda$.

By reading the entries on each dotted line from top to bottom, we get a sequence of interlacing partitions representing our RPP. In particular, we have
\[
\emptyset  \preceq (3) \succeq (1) \preceq (1,0)  \preceq (4,1) \succeq (3)  \preceq (4) \succeq  \emptyset. 
\]
Note that we see a $\preceq$ whenever the Maya diagram has a hole and a $\succeq$ whenever the Maya diagram has a particle.

We can now map this to a path configuration using our vertex model. Starting from the bottom, whenever we see a $\preceq$ we will have a row of white vertices, while whenever we see a $\succeq$ we put a gray row. The specific partitions tells at what positions the paths cross between the rows. In our example, we have
\[
\begin{tikzpicture}[baseline=(current bounding box.center)]
\draw[fill=lightgray] (1,1) rectangle (8,2);
\draw[fill=lightgray] (1,4) rectangle (8,5);
\draw[fill=lightgray] (1,6) rectangle (8,7);
\draw (1,0) grid (8,7);

\draw[fill=white] (1,0.5) circle (3pt);
\draw[fill=white] (1,1.5) circle (3pt);
\draw[fill=white] (1,2.5) circle (3pt);
\draw[fill=white] (1,3.5) circle (3pt);
\draw[fill=white] (1,4.5) circle (3pt);
\draw[fill=white] (1,5.5) circle (3pt);
\draw[fill=white] (1,6.5) circle (3pt);

\draw[fill=white] (8,0.5) circle (3pt);
\draw[fill=black] (8,1.5) circle (3pt);
\draw[fill=white] (8,2.5) circle (3pt);
\draw[fill=white] (8,3.5) circle (3pt);
\draw[fill=black] (8,4.5) circle (3pt);
\draw[fill=white] (8,5.5) circle (3pt);
\draw[fill=black] (8,6.5) circle (3pt);

\draw[fill=black] (1.5,0) circle (3pt);
\draw[fill=black] (2.5,0) circle (3pt);
\draw[fill=black] (3.5,0) circle (3pt);
\draw[fill=white] (4.5,0) circle (3pt);
\draw[fill=white] (5.5,0) circle (3pt);
\draw[fill=white] (6.5,0) circle (3pt);
\draw[fill=white] (7.5,0) circle (3pt);

\draw[fill=white] (1.5,7) circle (3pt);
\draw[fill=white] (2.5,7) circle (3pt);
\draw[fill=white] (3.5,7) circle (3pt);
\draw[fill=white] (4.5,7) circle (3pt);
\draw[fill=white] (5.5,7) circle (3pt);
\draw[fill=white] (6.5,7) circle (3pt);
\draw[fill=white] (7.5,7) circle (3pt);

\draw[red,fill=red] (4,0) circle (1pt);
\draw[red,fill=red] (4,1) circle (1pt);
\draw[red,fill=red] (3,2) circle (1pt);
\draw[red,fill=red] (3,3) circle (1pt);
\draw[red,fill=red] (3,4) circle (1pt);
\draw[red,fill=red] (2,5) circle (1pt);
\draw[red,fill=red] (2,6) circle (1pt);
\draw[red,fill=red] (1,7) circle (1pt);

\draw[very thick] (3.5,0)--(3.5,0.5)--(6.5,0.5)--(6.5,1.5)--(7.2,1.5); \node at (7.5,1.5) {$\ldots$}; \draw[very thick] (7.8,1.5)--(8,1.5);
\draw[very thick] (2.5,0)--(2.5,1.5)--(3.5,1.5)--(3.5,3.5)--(6.5,3.5)--(6.5,4.5)--(7.2,4.5); \node at (7.5,4.5) {$\ldots$}; \draw[very thick] (7.8,4.5)--(8,4.5);
\draw[very thick] (1.5,0)--(1.5,3.5)--(2.5,3.5)--(2.5,4.5)--(4.5,4.5)--(4.5,5.5)--(5.5,5.5)--(5.5,6.5)--(7.2,6.5); \node at (7.5,6.5) {$\ldots$}; \draw[very thick] (7.8,6.5)--(8,6.5);
\node at (7.5,0.5) {$\ldots$};
\node at (7.5,2.5) {$\ldots$};
\node at (7.5,3.5) {$\ldots$};
\node at (7.5,5.5) {$\ldots$};

\node[left] at (1,0.5) {$x_1$};
\node[left] at (1,1.5) {$x_2$};
\node[left] at (1,2.5) {$x_3$};
\node[left] at (1,3.5) {$x_4$};
\node[left] at (1,4.5) {$x_5$};
\node[left] at (1,5.5) {$x_6$};
\node[left] at (1,6.5) {$x_7$};
\end{tikzpicture}
\]
where row $i$ is given weight parameter $x_i$ which we display on the left of the diagram, and the paths configuration drawn is the unique path configuration satisfy the conditions on where the paths enter and exit each row.

The weights of the diagram is the product of the weight of each vertex which are determined from Figure \ref{fig:whiteweight} and \ref{fig:grayweight}. The total weight is $x_1^3x_2^4x_4^3x_5^3x_6x_7^3$.

Note the number of paths is number of non-zero parts of the shape $\lambda$ of the RPP. We could include some number of zero parts in $\lambda$ which would result in extending our domain with gray row on the bottom each with a new path exiting to the right. This would change the weight by an overall factor independent of the specific path configuration.
\end{example}

To make the sequence of Maya diagram explicit in the lozenge tilings we can assign particles and holes to the lozenges according to the rules
\begin{equation} \label{eq:particlestolozenges}
\begin{tikzpicture}[baseline=(current bounding box.center)]
\fill[YellowGreen,draw=Black,thick] (0,0)--(150:1)--(0,1)--(30:1)--(0,0);
\draw[fill=black] (0,0.5) circle (3pt);
\end{tikzpicture}
, \quad \quad
\begin{tikzpicture}[baseline=(current bounding box.center)]
\fill[Orchid,draw=Black,thick,shift={(150:1)}] (0,0)--(0,1)--(30:1)--(-30:1)--(0,0);
\draw[fill=white] (0,0.5) circle (3pt);
\draw[fill=white] (-0.87,1) circle (3pt);
\end{tikzpicture}
, \quad \quad
\begin{tikzpicture}[baseline=(current bounding box.center)]
\fill[RawSienna,draw=Black,thick,shift={(-0.05,0.05)}] (0,0)--(0,1)--(150:1)--(-150:1)--(0,0);
\draw[fill=white] (0,0.5) circle (3pt);
\draw[fill=white] (-0.87,0) circle (3pt);
\end{tikzpicture}.
\end{equation}

Example \ref{ex:particleholelozenge} shows how this assignment relates to the vertex model. 

\begin{example} \label{ex:particleholelozenge}
Here we have the RPP of shape $\lambda = (4,3,1)$ given in Example \ref{ex:RPPvertex}, shown as both a vertex model and a lozenge tiling.
 \[
\begin{tikzpicture}[baseline={(0,0)}, scale=0.8]
\draw[fill=lightgray] (1,1) rectangle (8,2);
\draw[fill=lightgray] (1,4) rectangle (8,5);
\draw[fill=lightgray] (1,6) rectangle (8,7);
\draw (1,0) grid (8,7);

\draw[fill=white] (1,0.5) circle (3pt);
\draw[fill=white] (1,1.5) circle (3pt);
\draw[fill=white] (1,2.5) circle (3pt);
\draw[fill=white] (1,3.5) circle (3pt);
\draw[fill=white] (1,4.5) circle (3pt);
\draw[fill=white] (1,5.5) circle (3pt);
\draw[fill=white] (1,6.5) circle (3pt);

\draw[fill=white] (8,0.5) circle (3pt);
\draw[fill=black] (8,1.5) circle (3pt);
\draw[fill=white] (8,2.5) circle (3pt);
\draw[fill=white] (8,3.5) circle (3pt);
\draw[fill=black] (8,4.5) circle (3pt);
\draw[fill=white] (8,5.5) circle (3pt);
\draw[fill=black] (8,6.5) circle (3pt);

\draw[fill=black] (1.5,0) circle (3pt);
\draw[fill=black] (2.5,0) circle (3pt);
\draw[fill=black] (3.5,0) circle (3pt);
\draw[fill=white] (4.5,0) circle (3pt);
\draw[fill=white] (5.5,0) circle (3pt);
\draw[fill=white] (6.5,0) circle (3pt);
\draw[fill=white] (7.5,0) circle (3pt);

\draw[fill=white] (1.5,7) circle (3pt);
\draw[fill=white] (2.5,7) circle (3pt);
\draw[fill=white] (3.5,7) circle (3pt);
\draw[fill=white] (4.5,7) circle (3pt);
\draw[fill=white] (5.5,7) circle (3pt);
\draw[fill=white] (6.5,7) circle (3pt);
\draw[fill=white] (7.5,7) circle (3pt);

\draw[red,fill=red] (4,0) circle (1pt);
\draw[red,fill=red] (4,1) circle (1pt);
\draw[red,fill=red] (3,2) circle (1pt);
\draw[red,fill=red] (3,3) circle (1pt);
\draw[red,fill=red] (3,4) circle (1pt);
\draw[red,fill=red] (2,5) circle (1pt);
\draw[red,fill=red] (2,6) circle (1pt);
\draw[red,fill=red] (1,7) circle (1pt);

\draw[very thick] (3.5,0)--(3.5,0.5)--(6.5,0.5)--(6.5,1.5)--(7.2,1.5); \node at (7.5,1.5) {$\ldots$}; \draw[very thick] (7.8,1.5)--(8,1.5);
\draw[very thick] (2.5,0)--(2.5,1.5)--(3.5,1.5)--(3.5,3.5)--(6.5,3.5)--(6.5,4.5)--(7.2,4.5); \node at (7.5,4.5) {$\ldots$}; \draw[very thick] (7.8,4.5)--(8,4.5);
\draw[very thick] (1.5,0)--(1.5,3.5)--(2.5,3.5)--(2.5,4.5)--(4.5,4.5)--(4.5,5.5)--(5.5,5.5)--(5.5,6.5)--(7.2,6.5); \node at (7.5,6.5) {$\ldots$}; \draw[very thick] (7.8,6.5)--(8,6.5);
\node at (7.5,0.5) {$\ldots$};
\node at (7.5,2.5) {$\ldots$};
\node at (7.5,3.5) {$\ldots$};
\node at (7.5,5.5) {$\ldots$};

\end{tikzpicture}
\hspace{1cm}
\begin{tikzpicture}[baseline = {(0,-2)}, scale=0.8]
  \reverseplanepartition{4}{{3},{1,1,4},{0,1,3,4}}
  \draw[black, thick, dashed] (-0.87,-2)--(-0.87,5);
   \draw[fill=white] (-0.87,0) circle (3pt);
  \draw[fill=white] (-0.87,1) circle (3pt);
  \draw[fill=white] (-0.87,2) circle (3pt);
  \draw[fill=white] (-0.87,3) circle (3pt);
  
  \draw[black, thick, dashed] (0,-2)--(0,5);
  \draw[fill=white] (0,-0.5) circle (3pt);
  \draw[fill=white] (0,0.5) circle (3pt);
  \draw[fill=white] (0,1.5) circle (3pt);
  \draw[fill=black] (0,2.5) circle (3pt);
  \draw[fill=white] (0,3.4) circle (3pt);
  
  \draw[black, thick, dashed] (0.87,-2)--(0.87,5);
  \draw[fill=white] (0.87,-0.9) circle (3pt);
  \draw[fill=black] (0.87,0) circle (3pt);
  \draw[fill=white] (0.87,1) circle (3pt);
  \draw[fill=white] (0.87,2) circle (3pt);
  \draw[fill=white] (0.87,3.1) circle (3pt);

  \draw[black, thick, dashed] (1.73,-2)--(1.73,5);
  \draw[fill=black] (1.73,-1.5) circle (3pt);
  \draw[fill=white] (1.73,-0.5) circle (3pt);
  \draw[fill=black] (1.73,0.5) circle (3pt);
  \draw[fill=white] (1.73,1.5) circle (3pt);
  \draw[fill=white] (1.73,2.5) circle (3pt);
  \draw[fill=white] (1.73,3.4) circle (3pt);

  \draw[black, thick, dashed] (2.6,-2)--(2.6,5);
  \draw[fill=white] (2.6,-0.9) circle (3pt);
  \draw[fill=black] (2.6,0) circle (3pt);
  \draw[fill=white] (2.6,1) circle (3pt);
  \draw[fill=white] (2.6,2) circle (3pt);
  \draw[fill=white] (2.6,3.1) circle (3pt);
  \draw[fill=black] (2.6,4.1) circle (3pt);

  \draw[black, thick, dashed] (3.465,-2)--(3.465,5);
  \draw[fill=white] (3.465,-0.5) circle (3pt);
  \draw[fill=white] (3.465,0.5) circle (3pt);
  \draw[fill=white] (3.465,1.5) circle (3pt);
  \draw[fill=black] (3.465,2.5) circle (3pt);
  \draw[fill=white] (3.465,3.5) circle (3pt);

  \draw[black, thick, dashed] (4.335,-2)--(4.335,5);
  \draw[fill=white] (4.335,0) circle (3pt);
  \draw[fill=white] (4.335,1) circle (3pt);
  \draw[fill=white] (4.335,2) circle (3pt);
  \draw[fill=white] (4.335,3.1) circle (3pt);
  \draw[fill=black] (4.335,4.1) circle (3pt);
  
  \draw[black, thick, dashed] (5.205,-2)--(5.205,5);
  \draw[fill=white] (5.205, 0.5) circle (3pt);
  \draw[fill=white] (5.205, 1.5) circle (3pt);
  \draw[fill=white] (5.205, 2.5) circle (3pt);
  \draw[fill=white] (5.205, 3.5) circle (3pt);
  
  \draw[red,fill=red] (-0.87,-0.5) circle (1pt);
  \draw[red,fill=red] (0,0) circle (1pt);
  \draw[red,fill=red] (0.87,-0.5) circle (1pt);
  \draw[red,fill=red] (1.73,0) circle (1pt);
  \draw[red,fill=red] (2.6,0.5) circle (1pt);
  \draw[red,fill=red] (3.465,0) circle (1pt);
  \draw[red,fill=red] (4.335,0.5) circle (1pt);
  \draw[red,fill=red] (5.205,0) circle (1pt);

\end{tikzpicture}
\]
The sequences of particles read from left-to-right in the lozenge tiling is the same as those in the vertex model (determined by where the paths cross between the rows) read from bottom-to-top. The small red circles in both figures are to mark the center of each Maya diagram. In the lozenge tiling picture, the centers trace the shape of the back wall. If every column had zero height then in each slice the particles would be packed below the centers and there would only be holes above the center.
 \end{example}

The mapping from RPP to vertex model configurations as in Example \ref{ex:RPPvertex} works in general. To begin, we show that there is a bijection from fillings of a RPP to certain sequences of interlacing partitions.
 \begin{theorem}\label{thm:RPPtoInterlace}
 Consider the set of RPP of shape $\lambda$. Let $n=\lambda_1 + \lambda_1'-1$. Suppose $I = (I_0,\ldots, I_n)\in \{\preceq,\succeq\}^n$ is chosen such that $I_k = \preceq$ if there is a hole at $-\lambda_1'+\frac{2k+1}{2}$ in the Maya diagram of $\lambda$ and $I_k = \succeq$ otherwise. Then for any $\Lambda \in \operatorname{RPP}(\lambda)$, the sequence of partitions $ \lambda^{(1)},\ldots, \lambda^{(n)}$ one gets from reading the filling of $\lambda$ (drawn in Russian convention) along vertical slices from left to right, satisfies the interlacing condition
 \[
 \emptyset = \lambda^{(0)} \;I_0\; \lambda^{(1)} \;I_1\; \lambda^{(2)} \;I_3\; \ldots \;I_n\; \lambda^{(n+1)} = \emptyset. 
 \]
 Moreover, this map from fillings to sequences of interlacing partitions is invertible.
 \end{theorem}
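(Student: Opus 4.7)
The plan is to analyze the vertical-slice decomposition of the Russian-convention Young diagram by translating it to diagonals in French convention. Writing cells as $(r,c)$ with $r$ the row index (counting from the bottom) and $c$ the column index, the $k$-th vertical slice from the left in Russian convention is exactly the set of cells $(r,c) \in \lambda$ with $c - r = d_k := k - \lambda_1'$, and reading this slice top-to-bottom in Russian corresponds to listing the cells in decreasing order of $r$. My goal is then to show (i) each such slice is itself a partition and (ii) consecutive slices interlace in the direction prescribed by the Maya diagram.

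Point (i) is immediate from the RPP axioms: along the diagonal $c - r = d$, the chain $a_{r,r+d} \le a_{r+1,r+d} \le a_{r+1,r+1+d}$ forces entries to be non-decreasing in $r$, so reading in decreasing $r$ yields a weakly decreasing sequence. For point (ii), I compare the row sets of the slices at $d$ and $d+1$. A row $r$ drops out exactly when $\lambda_r = r + d$, and the new row $r = -d$ joins at the bottom exactly when $-d \le \lambda_1'$. Since $r \mapsto \lambda_r - r$ is strictly decreasing, the leaving row is unique; moreover any $r' > r$ in the slice at $d$ would satisfy $\lambda_{r'} \le \lambda_r = r + d < r' + d$, contradicting $r'$ being in that slice, so the leaving row is necessarily the topmost row. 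By definition of the Maya diagram, position $d + 1/2$ is a particle iff $\lambda_i - i = d$ for some $i$, which is precisely the leaving condition. The interlacing itself then reduces to two elementary inequalities: row monotonicity gives $\nu_i \ge \mu_i$ by comparing $a_{r_i, r_i+d}$ with $a_{r_i, r_i+d+1}$, while the fact that $(r_i, r_i+d)$ lies weakly northeast of $(r_{i+1}, r_{i+1}+d+1)$ (using $r_i \ge r_{i+1}+1$) gives $\mu_i \ge \nu_{i+1}$. A short case analysis on whether the top row leaves and whether a new bottom row joins then yields $\mu \preceq \nu$ in the hole case and $\mu \succeq \nu$ in the particle case.

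Invertibility is essentially direct: an interlacing sequence with the prescribed pattern determines the entry on each cell via its diagonal, and the interlacing inequalities translate back into the row and column monotonicity conditions on the reconstructed filling, so the inverse map is well-defined. The main bookkeeping I anticipate is handling the case where a row both joins and leaves at the same step, along with the boundary interlacings $I_0$ and $I_n$ involving the empty partition; the former reduces to the same inequalities stated above, while the latter is automatic because position $-\lambda_1' + 1/2$ is never a particle (no $i$ satisfies $\lambda_i = i - \lambda_1'$) and position $\lambda_1 - 1/2$ always is (taking $i = 1$), so the endpoints force $I_0 = {\preceq}$ and $I_n = {\succeq}$ in agreement with $\emptyset \preceq \lambda^{(1)}$ and $\lambda^{(n)} \succeq \emptyset$.
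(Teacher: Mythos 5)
Your proposal is correct, but it is worth noting that the paper does not actually prove this theorem: its ``proof'' is a citation to Section~2.2 of Betea et al., where the bijection appears as an example of a more general correspondence between sequences of interlacing partitions and fillings. You instead give a direct, self-contained verification, and your mechanism is the right one: identifying the $k$-th Russian-convention slice with the diagonal $c-r=k-\lambda_1'$, observing that a slice is a partition by chaining the two RPP inequalities through the intermediate cell $(r+1,r+d)$, showing that a row can only leave a slice at the top (via strict monotonicity of $r\mapsto\lambda_r-r$, which also matches the particle condition $\lambda_i-i=d$ at position $d+\tfrac12$), and reducing each interlacing inequality to a single row- or column-monotonicity comparison between adjacent cells. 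I checked the four cases (top row leaves or not, bottom row joins or not) and your two elementary inequalities do assemble into $\mu\preceq\nu$ in the hole case and $\mu\succeq\nu$ in the particle case, including the degenerate single-cell slices and the endpoint slices at $d=-\lambda_1'$ and $d=\lambda_1$, which you correctly observe are forced empty. What your approach buys is an actual proof in place of a pointer; what it costs is the bookkeeping you acknowledge.

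The one place your sketch is genuinely thin is the inverse direction. Saying that ``an interlacing sequence determines the entry on each cell via its diagonal'' presupposes that $\lambda^{(k)}$ has no more nonzero parts than the diagonal $d_k=k-\lambda_1'$ has cells; otherwise there is nowhere to put the extra parts and the candidate inverse is not defined. This does follow from the interlacing pattern, but it needs an argument: from $\mu\preceq\nu \Rightarrow \ell(\nu)-1\le\ell(\mu)\le\ell(\nu)$ one gets the forward bound $\ell(\lambda^{(k)})\le\#\{j<k: I_j=\preceq\}$ and the backward bound $\ell(\lambda^{(k)})\le\#\{j\ge k: I_j=\succeq\}$, and a short computation with the Maya diagram shows that the number of cells on diagonal $d_k$ equals the first quantity when $d_k<0$ and the second when $d_k\ge 0$. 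With that inserted, the reconstructed filling is well defined and the RPP inequalities are exactly your case-analysis inequalities read in reverse, so the map is a bijection. I would add this length check explicitly; everything else in your argument is sound.
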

\begin{proof}
This bijection is given as an example in Section 2.2 of \cite{betea2014perfect} where the authors study more general sequences of interlacing partitions.
\end{proof}

Given a partition $\lambda$, define the domain for our vertex model
\[
\begin{tikzpicture}[baseline=(current bounding box.center)]
\draw[fill=lightgray] (1,1) rectangle (8,2);
\draw[fill=lightgray] (1,4) rectangle (8,5);
\draw[fill=lightgray] (1,6) rectangle (8,7);
\draw (1,0) grid (8,7);

\draw[fill=white] (1,0.5) circle (3pt);
\draw[fill=white] (1,1.5) circle (3pt);
\draw[fill=white] (1,2.5) circle (3pt);
\draw[fill=white] (1,3.5) circle (3pt);
\draw[fill=white] (1,4.5) circle (3pt);
\draw[fill=white] (1,5.5) circle (3pt);
\draw[fill=white] (1,6.5) circle (3pt);

\draw[fill=white] (8,0.5) circle (3pt);
\draw[fill=black] (8,1.5) circle (3pt);
\draw[fill=white] (8,2.5) circle (3pt);
\draw[fill=white] (8,3.5) circle (3pt);
\draw[fill=black] (8,4.5) circle (3pt);
\draw[fill=white] (8,5.5) circle (3pt);
\draw[fill=black] (8,6.5) circle (3pt);

\draw[fill=black] (1.5,0) circle (3pt);
\draw[fill=black] (2.5,0) circle (3pt);
\draw[fill=black] (3.5,0) circle (3pt);
\draw[fill=white] (4.5,0) circle (3pt);
\draw[fill=white] (5.5,0) circle (3pt);
\draw[fill=white] (6.5,0) circle (3pt);
\draw[fill=white] (7.5,0) circle (3pt);

\draw[fill=white] (1.5,7) circle (3pt);
\draw[fill=white] (2.5,7) circle (3pt);
\draw[fill=white] (3.5,7) circle (3pt);
\draw[fill=white] (4.5,7) circle (3pt);
\draw[fill=white] (5.5,7) circle (3pt);
\draw[fill=white] (6.5,7) circle (3pt);
\draw[fill=white] (7.5,7) circle (3pt);

\draw[red,fill=red] (4,0) circle (1pt);
\draw[red,fill=red] (4,1) circle (1pt);
\draw[red,fill=red] (3,2) circle (1pt);
\draw[red,fill=red] (3,3) circle (1pt);
\draw[red,fill=red] (3,4) circle (1pt);
\draw[red,fill=red] (2,5) circle (1pt);
\draw[red,fill=red] (2,6) circle (1pt);
\draw[red,fill=red] (1,7) circle (1pt);

\node at (7.5,0.5) {$\dots$};
\node at (7.5,1.5) {$\dots$};
\node at (7.5,2.5) {$\dots$};
\node at (7.5,3.5) {$\dots$};
\node at (7.5,4.5) {$\dots$};
\node at (7.5,5.5) {$\dots$};
\node at (7.5,6.5) {$\dots$};

\draw[|-] (1,-0.5)--(2.3,-0.5);
\draw[-|] (2.7,-0.5)--(4,-0.5);
\node at (2.5,-0.5) {$\ell$};
\end{tikzpicture}
\]
where $\ell=\ell(\lambda)$ and row $i$ is a white row if $I_{i-1}= \preceq$ (as defined in Theorem \ref{thm:RPPtoInterlace}) and is a gray row otherwise. We immediately have that there is a bijection between the sequence of interlacing partitions and path configuration in the vertex model since the sequence of partitions determines where paths enter and exit each row and there is a unique way to fill the rows iff the interlacing conditions are met.

Recall that the vertex models comes with weights, in which we row $i$ is given parameter $x_i$. We would like to choose these parameters so that the total weight of the path configuration is equal to $q$ to the volume of the reverse plane partition.

\begin{theorem}\label{thm:bij1color}
Let $\Lambda$ be a reverse plane partition and $\ell(\lambda)$ be the number of non-zero parts of $\lambda$. Let $\mathcal{C}$ be the corresponding path configuration under the bijection described above. Then if one chooses the weight parameters of the vertex model to be $x_i=q^{\pm i}$ where we take $+$ for a gray row and $-$ for a white row, we have
\[
q^{|\Lambda|} = w(\mathcal{C})\cdot A_\lambda(q) 
\]
where $A_\lambda(q)=q^{-\sum_{i=1}^{\ell(\lambda)} (\lambda_i+\ell(\lambda)-i+1)(i-1)}$ is completely determined by the shape $\lambda$ and is independent of the specific configuration. 
\end{theorem}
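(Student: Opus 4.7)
The plan is to read off $w(\mathcal{C})$ row by row and simplify via a telescoping argument. By Theorem \ref{thm:RPPtoInterlace}, the configuration $\mathcal{C}$ is encoded by the interlacing sequence $\emptyset = \lambda^{(0)}\,I_0\,\lambda^{(1)}\,I_1\,\cdots\,I_n\,\lambda^{(n+1)} = \emptyset$. Applying Lemma \ref{lem:rowweights} to each row and substituting $x_i = q^{-i}$ on white rows (where $I_{i-1} = \preceq$) and $x_i = q^{+i}$ on gray rows (where $I_{i-1} = \succeq$), both cases produce a common factor of $q^{i(|\lambda^{(i-1)}| - |\lambda^{(i)}|)}$, with an additional $q^{i\ell_i}$ on gray rows (where $\ell_i$ is the number of paths exiting the top of row $i$). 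Multiplying over all rows,
\[
w(\mathcal{C}) \;=\; q^{\sum_{i=1}^{n+1} i\,(|\lambda^{(i-1)}| - |\lambda^{(i)}|)} \cdot q^{\sum_{i \in G} i\,\ell_i},
\]
where $G$ denotes the set of gray row indices.

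I would handle the first exponent by an Abel summation/telescoping identity: setting $a_i = |\lambda^{(i)}|$ and using $a_0 = a_{n+1} = 0$, the sum $\sum_{i=1}^{n+1} i(a_{i-1} - a_i)$ collapses to $\sum_{i=1}^{n} a_i$. The slicing construction of Theorem \ref{thm:RPPtoInterlace} places each cell of $\Lambda$ on exactly one of the intermediate slices $\lambda^{(1)}, \ldots, \lambda^{(n)}$, so $\sum_{i=1}^n a_i = |\Lambda|$. Consequently $w(\mathcal{C}) = q^{|\Lambda|} \cdot q^{\sum_{i \in G} i\ell_i}$, and the theorem reduces to showing
\[
\sum_{i \in G} i\,\ell_i \;=\; \sum_{j=1}^{\ell(\lambda)} \bigl(\lambda_j + \ell(\lambda) - j + 1\bigr)(j-1).
\]

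For this identity I would describe the gray rows directly via the Maya diagram of $\lambda$. The particle representing the $j$-th largest part $\lambda_j$ sits at index $k_j = \lambda_j + \ell(\lambda) - j$ of the Maya sequence $(I_0, \ldots, I_n)$, so the corresponding gray row is row $i_j = k_j + 1 = \lambda_j + \ell(\lambda) - j + 1$. The value $\ell_{i_j}$ follows from path conservation: the bottom boundary carries $\ell(\lambda)$ paths, white rows preserve this count, and each gray row consumes exactly one path via its forced right exit. Since particles in the Maya diagram read from left to right correspond to parts of $\lambda$ from smallest to largest, the gray row for $\lambda_j$ is the $(\ell(\lambda) - j + 1)$-th gray row from the bottom, leaving $\ell_{i_j} = \ell(\lambda) - (\ell(\lambda) - j + 1) = j - 1$ paths exiting its top. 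Inserting $(i_j, \ell_{i_j})$ into the sum over $G$ produces the right-hand side of the displayed identity, so $q^{\sum_{i \in G} i \ell_i} \cdot A_\lambda(q) = 1$ and the theorem follows. The main obstacle is precisely this last bookkeeping step---matching Maya-diagram particle indices to gray-row indices and tracking how paths are consumed---while everything else is routine telescoping.
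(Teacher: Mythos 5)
Your proof is correct, but it takes a genuinely different route from the paper's. The paper verifies the identity on the single ``ground state'' configuration $\mathcal{C}_0$ (all entries zero), for which the weight is exactly $1/A_\lambda(q)$, and then argues that the set of configurations is connected under corner flips and checks in four cases (white/gray above and below) that each flip multiplies both $w(\mathcal{C})$ and $q^{|\Lambda|}$ by the same factor of $q$. You instead compute $w(\mathcal{C})$ globally: Lemma \ref{lem:rowweights} gives each row's weight in terms of the sizes of its boundary partitions, the substitution $x_i=q^{\pm i}$ makes the volume part telescope via $\sum_{i}i(|\lambda^{(i-1)}|-|\lambda^{(i)}|)=\sum_i|\lambda^{(i)}|=|\Lambda|$, and the residual gray-row factor $q^{\sum_{i\in G}i\ell_i}$ is evaluated by the same bookkeeping the paper uses for $\mathcal{C}_0$ (the gray row for $\lambda_j$ sits at index $\lambda_j+\ell-j+1$ and has $j-1$ paths exiting its top), yielding exactly $1/A_\lambda(q)$. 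Your route has the advantage of not needing the connectivity-under-corner-flips claim, which the paper asserts without proof, and of replacing the four-case local analysis with one summation-by-parts identity; the paper's route keeps all verification local and reuses the corner-flip picture elsewhere. Both the row-index formula and the path-count $\ell_{i_j}=j-1$ that you derive agree with the computations in the paper's proof, so the bookkeeping step you flag as the main obstacle is sound.
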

\begin{proof}
To begin, we have that the gray row corresponding to $\lambda_i$ will have parameter $q^{\lambda_i-i+\ell(\lambda)+1}$. To see this note that if the gray rows were fully packed at the bottom, the row corresponding to $\lambda_i$ would start as the $(\ell-i+1)$th row from the bottom and would need to swap position $\lambda_i$ times with adjacent white rows to get to its end location. Thus it would end as the $(\lambda_i+\ell-i+1)$th row.

As one path exits to the right at each gray row, there are $i-1$ paths exiting the top of the gray row corresponding to $\lambda_i$. If we choose the particular configuration $\mathcal{C}_0$ with each path going vertically upward until it must turn right to exit the domain, then the only non-trivial contribution to the weight is given by paths exiting at the top of a gray row. The total weight is then
\[
w(\mathcal{C}_0) = q^{\sum_{i=1}^{\ell(\lambda)} (\lambda_i-i+\ell(\lambda)+1)(i-1)} =\frac{1}{A_{\lambda}(q)}.
\]
For this configuration, the corresponding RPP has every entry equal to zero giving it zero volume and a weight of 1. We see that the theorem holds for the particular configuration $\mathcal{C}_0$.

Next, we note that the set of all possible path configurations is connected under corner flips of the form
\[
\begin{tikzpicture}[baseline=(current bounding box.center)]
\draw (0,0) grid (2,2);
\draw[very thick] (0.5,0.5)--(0.5,1.5)--(1.5,1.5);
\end{tikzpicture}
\quad \leftrightarrow \quad
\begin{tikzpicture}[baseline=(current bounding box.center)]
\draw (0,0) grid (2,2);
\draw[very thick] (0.5,0.5)--(1.5,0.5)--(1.5,1.5);
\end{tikzpicture}
\]
where each row can be either white or gray. To finish the proof we will show that the weight change in the vertex model after a corner flip is equal to the corresponding weight change for the RPP. We will show this is true for down flips (left to right in the above). The up flips can be work out similarly. 

Recall from our bijection from the vertex model to the RPP that the places the path cross between rows correspond to Maya diagrams of partitions that in turn correspond to the fillings of the RPP read along vertical slices when drawn in the Russian convention. 

Under a down corner flip, the path now crosses between the rows one column further to the right. This corresponds to one part of the corresponding partition increasing by one, which in turn corresponds to one entry in the RPP increasing by 1. We see that the volume of the RPP increases by 1, thus its weight has an additional factor of $q$. 

Finally, we are left with checking that the change in weight in the vertex model agrees. Suppose the rows are rows $i$ and $i+1$. There are four cases to check.

\noindent \textbf{Case 1: top white row - bottom white row}

To see that this agrees with the weight change in the vertex model, note that before the flip there is a factor of $q^{-(i+1)}$ in the weight from the path exiting a vertex to the right in the top row, while after the flip there is a factor of $q^{-i}$ from the path exiting a vertex to the right in the bottom row. We see that the weight gained a factor of $q$ after the corner flip, in agreement with the RPP.

\noindent \textbf{Case 2: top white row - bottom gray row}

In this case, before the flip, there is a factor of $q^{-(i+1)}$ in the weight from the path exiting right in the top row and two factor of $q^i$ from the path exiting upward in the gray row and the empty vertex in the gray row. The net contribution to the weight is $q^{i-1}$. After the flip there is only a factor of $q^i$ from the path exiting upward in the gray row. We see that the weight gained a factor of $q$ after the corner flip.

\noindent \textbf{Case 3: top gray row - bottom white row}

In this case, before the flip, the corner contributes a weight of 1. After the flip there is only a factor of $q^{-i}$ from the path exiting a vertex to the right in the white row and a factor of $q^{i+1}$ from the empty vertex in the gray row. We see that the weight gained a factor of $q$ after the corner flip.

\noindent \textbf{Case 4: top gray row - bottom gray row}

In this case, before the flip, the corner contributes a two factors of $q^{i}$ from the path exiting upward and the empty vertex in the bottom row. After the flip, the corner contributes a factors of $q^{i}$ from the path exiting upward in the bottom row and a factor of $q^{i+1}$ from the empty vertex in the top row. We see that the weight gained a factor of $q$ after the corner flip.

\end{proof}

\subsection{The Generating Function}
We can use this bijection to give a proof of Theorem \ref{thm:genfunRPP}. In Section \ref{sec:overlappingRPPs}, we will use the same technique to compute the generating function for coupled RPPs. To begin we need the following lemma.

\begin{lemma} \label{lem:whitegrayswap}
Choose $x,y$ such that $|xy|<1$. Then the white and gray rows satisfy the commutation relation
\begin{equation}
\begin{aligned}
&\begin{tikzpicture}[baseline=(current bounding box.center)]
\draw[fill=lightgray] (0,0) rectangle (7,1);
\draw (0,0) grid (7,2);
\node at (6.5,0.5) {$\dots$};
\node at (6.5,1.5) {$\dots$};
\node at (0.5,0.5) {$x$};
\node at (1.5,0.5) {$x$};
\node at (2.5,0.5) {$x$};
\node at (3.5,0.5) {$x$};
\node at (4.5,0.5) {$x$};
\node at (5.5,0.5) {$x$};
\node at (0.5,1.5) {$y$};
\node at (1.5,1.5) {$y$};
\node at (2.5,1.5) {$y$};
\node at (3.5,1.5) {$y$};
\node at (4.5,1.5) {$y$};
\node at (5.5,1.5) {$y$};

\node[below] at (3.5,0) {$\mu$};
\node[above] at (3.5,2) {$\lambda$};

\draw[fill=white] (0,0.5) circle (3pt);
\draw[fill=white] (0,1.5) circle (3pt);
\draw[fill=black] (7,0.5) circle (3pt);
\draw[fill=white] (7,1.5) circle (3pt);
\end{tikzpicture}
\\
&\qquad \qquad = (1-xy) 
\begin{tikzpicture}[baseline=(current bounding box.center)]
\draw[fill=lightgray] (0,1) rectangle (7,2);
\draw (0,0) grid (7,2);
\node at (6.5,0.5) {$\dots$};
\node at (6.5,1.5) {$\dots$};
\node at (0.5,0.5) {$y$};
\node at (1.5,0.5) {$y$};
\node at (2.5,0.5) {$y$};
\node at (3.5,0.5) {$y$};
\node at (4.5,0.5) {$y$};
\node at (5.5,0.5) {$y$};
\node at (0.5,1.5) {$x$};
\node at (1.5,1.5) {$x$};
\node at (2.5,1.5) {$x$};
\node at (3.5,1.5) {$x$};
\node at (4.5,1.5) {$x$};
\node at (5.5,1.5) {$x$};

\node[below] at (3.5,0) {$\mu$};
\node[above] at (3.5,2) {$\lambda$};

\draw[fill=white] (0,0.5) circle (3pt);
\draw[fill=white] (0,1.5) circle (3pt);
\draw[fill=white] (7,0.5) circle (3pt);
\draw[fill=black] (7,1.5) circle (3pt);
\end{tikzpicture}
\end{aligned}
\end{equation}
where $\lambda$ and $\mu$ are partitions corresponding to the top and bottom boundary conditions of the pair of rows.
\end{lemma}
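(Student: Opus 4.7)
The plan is to prove this commutation relation by the standard Yang--Baxter ``train argument'' applied to Corollary \ref{cor:YBE} with spectral parameter $z = xy$. The idea is to attach an auxiliary cross vertex to the left of the two-row configuration on the LHS (which is free, since the left boundaries of both rows carry no paths), slide it through all columns using the YBE, and show that after the slide the only surviving configuration of the resulting cross on the right contributes a factor of $(1 - xy)$ times the RHS.

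First I would attach a cross vertex with parameter $z = xy$ at the left end of the LHS diagram. Its left-boundary labels are $(i_1, i_2) = (0, 0)$ since no paths enter either row, so the only compatible cross configuration is the empty one, contributing weight $1$. Next I would apply Corollary \ref{cor:YBE} repeatedly, one column at a time, to push the cross across the entire two-row block; each application preserves the weighted sum of configurations while converting one column from (bottom gray with parameter $x$, top white with parameter $y$) to (bottom white with parameter $y$, top gray with parameter $x$). After sliding through all columns, the two-row block now matches the structure of the RHS, and the cross vertex sits on the right with its right-boundary labels determined by the original external boundary: $j_1 = 1$ (bottom path exits) and $j_2 = 0$ (no top path exits).

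I would then classify the cross configurations compatible with $(j_1, j_2) = (1, 0)$. Two configurations survive: the straight crossing with $(i_1, i_2, j_1, j_2) = (1, 0, 1, 0)$ and weight $1 - xy$, and the bottom turn with $(0, 1, 1, 0)$ and weight $1$. The first injects into the right side of the swapped block exactly the boundary condition needed for the RHS of the lemma (path out of the top gray row, no path out of the bottom white row) and therefore contributes $(1 - xy) \cdot \text{RHS}$. The second would force the top gray row to have no path exiting right and the bottom white row to have a path exiting right, producing an infinite rightward tail of empty gray vertices (each of weight $x$) and horizontal white vertices (each of weight $y$); the resulting factor of $(xy)^\infty$ vanishes under the hypothesis $|xy| < 1$.

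The main technical obstacle is making this slide rigorous despite the rows being infinite to the right: one needs a regularization such as truncating to $N$ columns, executing the train argument on the finite domain, and then taking $N \to \infty$, together with a careful verification that the ``wrong'' cross configuration's contribution converges to zero in that limit (the hypothesis $|xy|<1$ is used precisely here). Once this analytic point is handled, the two contributions combine to give LHS $= (1 - xy) \cdot \text{RHS}$.
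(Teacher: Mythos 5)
Your proposal is correct and follows essentially the same train argument as the paper's own proof: attach an empty cross with $z=xy$ at the free left boundary, slide it through the rows via Corollary \ref{cor:YBE}, and discard the bottom-turn configuration of the cross on the right because its infinite tail of columns each weighted $xy$ vanishes when $|xy|<1$, leaving only the straight crossing of weight $1-xy$. The only difference is that you explicitly flag the need to regularize the semi-infinite rows before taking a limit, a point the paper passes over silently.
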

\begin{proof}
We start with the LHS and add a cross to the left boundary as shown below
\[
\begin{tikzpicture}[baseline=(current bounding box.center)]
\draw[fill=lightgray] (0,0) rectangle (7,1);
\draw (0,0) grid (7,2);
\draw (-1,1.5)--(0,0.5);
\draw (-1,0.5)--(0,1.5);
\node at (6.5,0.5) {$\dots$};
\node at (6.5,1.5) {$\dots$};
\node at (0.5,0.5) {$x$};
\node at (1.5,0.5) {$x$};
\node at (2.5,0.5) {$x$};
\node at (3.5,0.5) {$x$};
\node at (4.5,0.5) {$x$};
\node at (5.5,0.5) {$x$};
\node at (0.5,1.5) {$y$};
\node at (1.5,1.5) {$y$};
\node at (2.5,1.5) {$y$};
\node at (3.5,1.5) {$y$};
\node at (4.5,1.5) {$y$};
\node at (5.5,1.5) {$y$};

\node[right] at (-0.55,1) {$xy$};

\node[below] at (3.5,0) {$\mu$};
\node[above] at (3.5,2) {$\lambda$};

\draw[fill=white] (-1,0.5) circle (3pt);
\draw[fill=white] (-1,1.5) circle (3pt);
\draw[fill=black] (7,0.5) circle (3pt);
\draw[fill=white] (7,1.5) circle (3pt);
\end{tikzpicture}.
\]
Note that the boundary condition forces the cross to be empty and that the empty cross has weight $1$, so the two row partition function remains unchanged. By repeatedly applying the YBE from Corollary \ref{cor:YBE} we may move the cross all the way to the right without changing the partition function. Doing this we have
\[
\begin{tikzpicture}[baseline=(current bounding box.center)]
\draw[fill=lightgray] (0,1) rectangle (7,2);
\draw (0,0) grid (7,2);
\draw (7,1.5)--(8,0.5);
\draw (7,0.5)--(8,1.5);

\node at (6.5,0.5) {$\dots$};
\node at (6.5,1.5) {$\dots$};
\node at (0.5,0.5) {$y$};
\node at (1.5,0.5) {$y$};
\node at (2.5,0.5) {$y$};
\node at (3.5,0.5) {$y$};
\node at (4.5,0.5) {$y$};
\node at (5.5,0.5) {$y$};
\node at (0.5,1.5) {$x$};
\node at (1.5,1.5) {$x$};
\node at (2.5,1.5) {$x$};
\node at (3.5,1.5) {$x$};
\node at (4.5,1.5) {$x$};
\node at (5.5,1.5) {$x$};

\node[below] at (3.5,0) {$\mu$};
\node[above] at (3.5,2) {$\lambda$};
\node[right] at (7.45,1) {$xy$};

\draw[fill=white] (0,0.5) circle (3pt);
\draw[fill=white] (0,1.5) circle (3pt);
\draw[fill=black] (8,0.5) circle (3pt);
\draw[fill=white] (8,1.5) circle (3pt);
\end{tikzpicture}.
\]
There are two possibilities for the cross vertex
\[
\begin{tikzpicture}[baseline=(current bounding box.center)]
\draw (0,1.5)--(1,0.5);
\draw (0,0.5)--(1,1.5);
\draw[very thick] (0,0.5)--(0.5,1)--(1,0.5);
\draw[fill=black] (0,0.5) circle (3pt);
\draw[fill=white] (0,1.5) circle (3pt);
\draw[fill=black] (1,0.5) circle (3pt);
\draw[fill=white] (1,1.5) circle (3pt);
\end{tikzpicture}
\quad \text{or} \quad
\begin{tikzpicture}[baseline=(current bounding box.center)]
\draw (0,1.5)--(1,0.5);
\draw (0,0.5)--(1,1.5);
\draw[very thick] (0,1.5)--(1,0.5);
\draw[fill=white] (0,0.5) circle (3pt);
\draw[fill=black] (0,1.5) circle (3pt);
\draw[fill=black] (1,0.5) circle (3pt);
\draw[fill=white] (1,1.5) circle (3pt);
\end{tikzpicture}.
\]
In the first case, the path must have exited the right boundary in the white row. There must have been infinitely many columns with the path traveling horizontally in a white vertex while the gray vertex is empty, with each of these columns contributing a weight of $xy$. As $|xy|<1$, the overall weight of such a configuration is zero. 

Thus the second case is the only one with nonzero weight. Removing the cross at a cost of its weight $1-xy$ gives the desired commutation relation.
\end{proof}

\begin{proof}[Proof of Thm. \ref{thm:genfunRPP}]
By repeated applications of Lemma \ref{lem:whitegrayswap} we can push all the gray rows down to get
\begin{equation}\label{eq:alltheswaps}
\begin{tikzpicture}[scale=0.7,baseline=(current bounding box.center)]
\draw[fill=lightgray] (1,1) rectangle (8,2);
\draw[fill=lightgray] (1,4) rectangle (8,5);
\draw[fill=lightgray] (1,6) rectangle (8,7);
\draw (1,0) grid (8,7);

\draw[fill=white] (1,0.5) circle (3pt);
\draw[fill=white] (1,1.5) circle (3pt);
\draw[fill=white] (1,2.5) circle (3pt);
\draw[fill=white] (1,3.5) circle (3pt);
\draw[fill=white] (1,4.5) circle (3pt);
\draw[fill=white] (1,5.5) circle (3pt);
\draw[fill=white] (1,6.5) circle (3pt);

\draw[fill=white] (8,0.5) circle (3pt);
\draw[fill=black] (8,1.5) circle (3pt);
\draw[fill=white] (8,2.5) circle (3pt);
\draw[fill=white] (8,3.5) circle (3pt);
\draw[fill=black] (8,4.5) circle (3pt);
\draw[fill=white] (8,5.5) circle (3pt);
\draw[fill=black] (8,6.5) circle (3pt);

\draw[fill=black] (1.5,0) circle (3pt);
\draw[fill=black] (2.5,0) circle (3pt);
\draw[fill=black] (3.5,0) circle (3pt);
\draw[fill=white] (4.5,0) circle (3pt);
\draw[fill=white] (5.5,0) circle (3pt);
\draw[fill=white] (6.5,0) circle (3pt);
\draw[fill=white] (7.5,0) circle (3pt);

\draw[fill=white] (1.5,7) circle (3pt);
\draw[fill=white] (2.5,7) circle (3pt);
\draw[fill=white] (3.5,7) circle (3pt);
\draw[fill=white] (4.5,7) circle (3pt);
\draw[fill=white] (5.5,7) circle (3pt);
\draw[fill=white] (6.5,7) circle (3pt);
\draw[fill=white] (7.5,7) circle (3pt);

\draw[red,fill=red] (4,0) circle (1pt);
\draw[red,fill=red] (4,1) circle (1pt);
\draw[red,fill=red] (3,2) circle (1pt);
\draw[red,fill=red] (3,3) circle (1pt);
\draw[red,fill=red] (3,4) circle (1pt);
\draw[red,fill=red] (2,5) circle (1pt);
\draw[red,fill=red] (2,6) circle (1pt);
\draw[red,fill=red] (1,7) circle (1pt);

\node at (7.5,0.5) {$\dots$};
\node at (7.5,1.5) {$\dots$};
\node at (7.5,2.5) {$\dots$};
\node at (7.5,3.5) {$\dots$};
\node at (7.5,4.5) {$\dots$};
\node at (7.5,5.5) {$\dots$};
\node at (7.5,6.5) {$\dots$};

\draw[|-] (1,-0.5)--(2.1,-0.5);
\draw[-|] (2.9,-0.5)--(4,-0.5);
\node at (2.5,-0.5) {$\ell(\lambda)$};
\end{tikzpicture}
= 
C(q)
\begin{tikzpicture}[scale=0.7,baseline=(current bounding box.center)]
\draw[fill=lightgray] (1,0) rectangle (8,1);
\draw[fill=lightgray] (1,1) rectangle (8,2);
\draw[fill=lightgray] (1,2) rectangle (8,3);
\draw (1,0) grid (8,7);

\draw[fill=white] (1,0.5) circle (3pt);
\draw[fill=white] (1,1.5) circle (3pt);
\draw[fill=white] (1,2.5) circle (3pt);
\draw[fill=white] (1,3.5) circle (3pt);
\draw[fill=white] (1,4.5) circle (3pt);
\draw[fill=white] (1,5.5) circle (3pt);
\draw[fill=white] (1,6.5) circle (3pt);

\draw[fill=black] (8,0.5) circle (3pt);
\draw[fill=black] (8,1.5) circle (3pt);
\draw[fill=black] (8,2.5) circle (3pt);
\draw[fill=white] (8,3.5) circle (3pt);
\draw[fill=white] (8,4.5) circle (3pt);
\draw[fill=white] (8,5.5) circle (3pt);
\draw[fill=white] (8,6.5) circle (3pt);

\draw[fill=black] (1.5,0) circle (3pt);
\draw[fill=black] (2.5,0) circle (3pt);
\draw[fill=black] (3.5,0) circle (3pt);
\draw[fill=white] (4.5,0) circle (3pt);
\draw[fill=white] (5.5,0) circle (3pt);
\draw[fill=white] (6.5,0) circle (3pt);
\draw[fill=white] (7.5,0) circle (3pt);

\draw[fill=white] (1.5,7) circle (3pt);
\draw[fill=white] (2.5,7) circle (3pt);
\draw[fill=white] (3.5,7) circle (3pt);
\draw[fill=white] (4.5,7) circle (3pt);
\draw[fill=white] (5.5,7) circle (3pt);
\draw[fill=white] (6.5,7) circle (3pt);
\draw[fill=white] (7.5,7) circle (3pt);

\draw[red,fill=red] (4,0) circle (1pt);
\draw[red,fill=red] (4,1) circle (1pt);
\draw[red,fill=red] (3,2) circle (1pt);
\draw[red,fill=red] (3,3) circle (1pt);
\draw[red,fill=red] (3,4) circle (1pt);
\draw[red,fill=red] (2,5) circle (1pt);
\draw[red,fill=red] (2,6) circle (1pt);
\draw[red,fill=red] (1,7) circle (1pt);

\node at (7.5,0.5) {$\dots$};
\node at (7.5,1.5) {$\dots$};
\node at (7.5,2.5) {$\dots$};
\node at (7.5,3.5) {$\dots$};
\node at (7.5,4.5) {$\dots$};
\node at (7.5,5.5) {$\dots$};
\node at (7.5,6.5) {$\dots$};

\draw[|-] (1,-0.5)--(2.1,-0.5);
\draw[-|] (2.9,-0.5)--(4,-0.5);
\node at (2.5,-0.5) {$\ell(\lambda)$};
\end{tikzpicture}
\end{equation}
where the factor $C(q)$ comes from the many row swaps. Note that the RHS has one valid path configuration with weight given by $\frac{1}{A_{\lambda}(q)}$. We claim that 
\begin{equation} \label{eq:C}
C(q) =  \prod_{c \in \lambda} \frac{1}{1-q^{h_\lambda(c)}}.
\end{equation}

Recall from the proof of Theorem \ref{thm:bij1color} that the $i^{th}$ gray row, ordered from topmost to bottommost, will have parameter $q^{\lambda_i+\ell-i+1}$ where $\ell=\ell(\lambda)$. For the white rows, we have that the $j^{th}$ white row, ordered from bottommost to topmost, will have parameter $q^{\lambda_j' - \ell - j}$. To see this, note that if we start with the white rows packed at the top (as in the LHS of Equation \eqref{eq:alltheswaps}) then the row corresponding to $\lambda_j'$ start as row $\ell+j$ from the bottom and must swap $\lambda_j'$ times to get to its final position. Thus it ends up as row $\ell+j-\lambda_j'$ from the bottom. 

We see we can associate a row $\lambda_i$ to each gray row and a column $\lambda_j'$ to each white row. Note that to get from the RHS of Equation \eqref{eq:alltheswaps} to the LHS, the gray row associated to $\lambda_i$ only swaps with white rows associate to $\lambda_j'$ only if $\lambda_i \ge j$.  Thus to each pair of gray and white rows that swap, we can associate a cell in the Young diagram of $\lambda$ that lies in the intersection of the corresponding row and column of $\lambda$. When swapping the rows (from gray row on top to gray row on bottom), we see from Lemma \ref{lem:whitegrayswap} that we pick up a factor of
\[
\left(1-q^{\lambda_i + \ell - i +1}q^{\lambda_j' - \ell - j}\right)^{-1} = \left(1-q^{\lambda_i - i + \lambda_j' - j + 1} \right)^{-1} =  \left(1-q^{h_{\lambda}(c)} \right)^{-1}
\]
where $c$ is the cell in row $i$ and column $j$ of the Young diagram. Collecting these factors for each row swap gives Equation \eqref{eq:C}.

Altogether we have 
\[
\frac{1}{A_{\lambda}(q)}\prod_{c \in \lambda} \frac{1}{1-q^{h_\lambda(c)}} = \sum_{\mathcal{C}} w(\mathcal{C}) = \frac{1}{A_{\lambda}(q)}\sum_{\Lambda} q^{|\Lambda|}
\]
where the last equality follows from Theorem \ref{thm:bij1color}. It follows that
\[
\sum_{\Lambda} q^{|\Lambda|} = \prod_{c \in \lambda} \frac{1}{1-q^{h_\lambda(c)}}
\]
as desired.
\end{proof}

\subsection{An alternate path formulation}
We end this section by describing a mapping between RPPs and certain nonintersecting paths related to the lozenge tilings. This way of viewing the RPPs will be advantageous when we define interacting RPPs in Section \ref{sec:overlappingRPPs}.

We call a skew-partition $\lambda/\mu$ a \emph{border strip} if it does not contain a $2\times2$ box in its Young diagram. Given the Young diagram of a partition $\lambda$, we can recursively construct a sequence of border strips by first finding the largest border strip in $\lambda$, removing it, then repeating. We will indicate these border strip by drawing paths over the cells in each border strip as shown in Example \ref{ex:borderstrip}.

\begin{example}\label{ex:borderstrip}
Let $\lambda=(4,4,3,3,1)$. The border strip decomposition drawn as paths is given below.
\[
\begin{tikzpicture}[scale=0.7]
    \draw[black,thick] (0,0) grid (3,4);
    \draw[black,thick] (0,0) grid (4,2);
    \draw[black,thick] (0,0) grid (1,5);
    \draw[blue,very thick] (0,4.5)--(0.5,4.5)--(0.5,3.5)--(2.5,3.5)--(2.5,1.5)--(3.5,1.5)--(3.5,0);
    \draw[blue,very thick] (0,2.5)--(1.5,2.5)--(1.5,0.5)--(2.5,0.5)--(2.5,0);
    \draw[blue,very thick] (0,1.5)--(0.5,1.5)--(0.5,0);
\end{tikzpicture}
\]
\end{example}
Given a RPP of shape $\lambda$, we can think of these paths as being drawn on top of the corresponding stacks of cubes. To make each path connected, we assign paths to the other lozenge tilings according to the following rules:
\begin{itemize}
\item In a vertical slice of the lozenge tiling corresponding to hole in the Maya diagram of $\lambda$, we have
\[
\begin{tikzpicture}[baseline=(current bounding box.center)]
\fill[Orchid,draw=Black,thick,shift={(150:1)}] (0,0)--(0,1)--(30:1)--(-30:1)--(0,0);
\draw[blue,thick] (-0.435,0.25)--(-0.435,1.25);
\end{tikzpicture}
\quad \text{and} \quad
\begin{tikzpicture}[baseline=(current bounding box.center)]
\fill[RawSienna,draw=Black,thick,shift={(-0.05,0.05)}] (0,0)--(0,1)--(150:1)--(-150:1)--(0,0);
\end{tikzpicture}.
\]
\item In a vertical slice of the lozenge tiling corresponding to particle in the Maya diagram of $\lambda$, we have
\[ 
\begin{tikzpicture}[baseline=(current bounding box.center)]
\fill[Orchid,draw=Black,thick,shift={(150:1)}] (0,0)--(0,1)--(30:1)--(-30:1)--(0,0);
\end{tikzpicture}
\quad \text{and} \quad
\begin{tikzpicture}[baseline=(current bounding box.center)]
\fill[RawSienna,draw=Black,thick,shift={(-0.05,0.05)}] (0,0)--(0,1)--(150:1)--(-150:1)--(0,0);
\draw[blue,thick] (-0.485,-0.2)--(-0.485,0.8);
\end{tikzpicture}. 
\]
\end{itemize}
In Example of these paths is given in Example \ref{ex:RPPpaths}.

\begin{example} \label{ex:RPPpaths}
Below we show a RPP of shape $\lambda=(4,4,3,3,1)$ and the corresponding paths coming from the border strip decomposition in Example \ref{ex:borderstrip}.
\[
\begin{tikzpicture}[scale=0.7]
    \draw[black,thick] (0,0) grid (3,4);
    \draw[black,thick] (0,0) grid (1,5);
    \draw[black,thick] (0,0) grid (4,2);
    \draw[blue] node at (0.5,0.5) {$0$};
    \draw[blue] node at (0.5,1.5) {$1$};
    \draw[blue] node at (0.5,2.5) {$1$};
    \draw[blue] node at (1.5,0.5) {$0$};
    \draw[blue] node at (2.5,0.5) {$1$};
    \draw[blue] node at (1.5,1.5) {$2$};
    \draw[blue] node at (1.5,2.5) {$4$};
    \draw[blue] node at (0.5,3.5) {$2$};
    \draw[blue] node at (0.5,4.5) {$3$};
    \draw[blue] node at (1.5,3.5) {$4$};
    \draw[blue] node at (2.5,1.5) {$2$};
    \draw[blue] node at (2.5,2.5) {$4$};
    \draw[blue] node at (2.5,3.5) {$4$};
    \draw[blue] node at (3.5,0.5) {$3$};
    \draw[blue] node at (3.5,1.5) {$4$};
\end{tikzpicture}\hspace{1cm}
\begin{tikzpicture}[scale=0.7]
    \reverseplanepartition{4}{{3},{2,4,4},{1,4,4},{1,2,2,4},{0,0,1,3}}
    \fill[White,fill opacity=0.5] (-0.9,-3.1) rectangle (7,4.6);
    \draw[blue,thick,shift={(150:0.5)}] (0,-1)--(0,2);
    \draw[blue,thick,shift={(0,2.5)}] (-150:0.5)--(0,0)--(-30:0.5);
    \draw[blue,thick,shift={(0,1)},shift={(30:0.5)}] (0,1)--(0,0)--(-30:0.5);
    \draw[blue,thick,shift={(30:1.5)},shift={(0,0.5)}] (-150:0.5)--(0,0)--(0,2);
    \draw[blue,thick,shift={(30:3)},shift={(0,2.5)}] (-150:1.5)--(0,0)--(-30:1.5);
    \draw[blue,thick,shift={(30:4.5)},shift={(0,-1)}] (0,2)--(0,0)--(-30:0.5);
    \draw[blue,thick,shift={(30:4)},shift={(-30:1.5)}] (-150:0.5)--(0,0)--(0,2);
    \draw[blue,thick,shift={(30:6)},shift={(0,0.5)}] (-150:0.5)--(0,0)--(-30:0.5);
    \draw[blue,thick,shift={(30:6.5)},shift={(0,-1)}] (0,1)--(0,0)--(-30:1);
    \draw[blue,thick,shift={(30:2.5)},shift={(-30:5)}] (0,3)--(0,0);
    \draw[blue,thick,shift={(-30:2.5)},shift={(0,-1)}] (0,0)--(0,1);
    \draw[blue,thick,shift={(30:3)},shift={(0,-1.5)},shift={(0,-1)}] (-150:0.5)--(0,0)--(-30:0.5);
    \draw[blue,thick,shift={(-30:3)},shift={(30:0.5)},shift={(0,-1)}] (0,1)--(0,0)--(-30:1);
    \draw[blue,thick,shift={(-30:1.5)},shift={(0,-1)}] (0,0)--(0,1);
    \draw[blue,thick,shift={(30:2)},shift={(-30:0.5)},shift={(0,-1)}] (-150:1)--(0,0)--(0,3);
    \draw[blue,thick,shift={(30:3)},shift={(0,2.5)},shift={(0,-1)}] (-150:0.5)--(0,0)--(-30:0.5);
    \draw[blue,thick,shift={(30:3.5)},shift={(0,-1)}] (0,2)--(0,0)--(-30:1);
    \draw[blue,thick,shift={(-30:4.5)},shift={(0,1.5)},shift={(0,-1)}] (0,2)--(0,0)--(-30:0.5);
    \draw[blue,thick,shift={(-30:3.5)},shift={(30:2)},shift={(0,-1)}] (-150:0.5)--(0,0)--(0,1);
    \draw[blue,thick,shift={(-30:2.5)},shift={(30:3.5)},shift={(0,-1)}] (-150:0.5)--(0,0)--(-30:0.5);
    \draw[blue,thick,shift={(-30:4)},shift={(30:2.5)},shift={(0,-1)}] (0,1)--(0,0);
\end{tikzpicture}
\]
\end{example}

One advantage of the path formulation is that it differentiates between the lozenges that correspond to white rows in the vertex model and those corresponding to gray rows. In fact, by following through the string of bijections, one can make the following dictionaries. For white vertices we have
\begin{equation}\label{eq:whitedict}
\begin{tabular}{cccc}
\begin{tikzpicture}[baseline=(current bounding box.center)]\draw[thin] (0,0) rectangle (1,1);
\end{tikzpicture} & 
\begin{tikzpicture}[baseline=(current bounding box.center)]\draw[thin] (0,0) rectangle (1,1);\draw[black, very thick] (0.5,0)--(0.5,0.5);\end{tikzpicture} & 
\begin{tikzpicture}[baseline=(current bounding box.center)] \draw[thin] (0,0) rectangle (2,1); \draw[thin] (0,0) grid (2,1); \draw[black, very thick] (0.5,0.5)--(1.5,0.5); \end{tikzpicture} & 
\begin{tikzpicture}[baseline=(current bounding box.center)] \draw[thin] (0,0) rectangle (1,1); \draw[black, very thick] (0.5,0.5)--(0.5,1); \end{tikzpicture} \\ \\
\begin{tikzpicture}[baseline=(current bounding box.center)]
\fill[RawSienna,draw=Black,thick,shift={(-0.05,0.05)}] (0,0)--(0,1)--(150:1)--(-150:1)--(0,0);
\end{tikzpicture} &
\begin{tikzpicture}[baseline=(current bounding box.center)]
\fill[YellowGreen,draw=Black,thick] (0,0)--(150:1)--(0,1)--(30:1)--(0,0);
\draw[blue,thick] (0,0.5)--(0.435,0.75);
\draw[dashed] (0,0)--(0,1);
\draw[dashed] (0.87,0)--(0.87,1);
\end{tikzpicture} &
\begin{tikzpicture}[baseline=(current bounding box.center)]
\fill[Orchid,draw=Black,thick,shift={(150:1)}] (0,0)--(0,1)--(30:1)--(-30:1)--(0,0);
\draw[blue,thick] (-0.435,0.25)--(-0.435,1.25);
\end{tikzpicture} &
\begin{tikzpicture}[baseline=(current bounding box.center)]
\fill[YellowGreen,draw=Black,thick] (0,0)--(150:1)--(0,1)--(30:1)--(0,0);
\draw[blue,thick] (-0.435,0.25)--(0,0.5);
\draw[dashed] (0,0)--(0,1);
\draw[dashed] (-0.87,0)--(-0.87,1);
\end{tikzpicture} 
\end{tabular}
\end{equation}
and for gray vertices we have
\begin{equation}\label{eq:graydict}
\begin{tabular}{cccc}
\begin{tikzpicture}[baseline=(current bounding box.center)]\draw[thin,fill=lightgray] (0,0) rectangle (1,1);  \end{tikzpicture} & 
\begin{tikzpicture}[baseline=(current bounding box.center)]\draw[thin,fill=lightgray] (0,0) rectangle (1,1);\draw[black, very thick] (0.5,0)--(0.5,0.5);\end{tikzpicture} & 
\begin{tikzpicture}[baseline=(current bounding box.center)] \draw[thin,fill=lightgray] (0,0) rectangle (2,1); \draw[thin] (0,0) grid (2,1); \draw[black, very thick] (0.5,0.5)--(1.5,0.5); \end{tikzpicture} & 
\begin{tikzpicture}[baseline=(current bounding box.center)] \draw[thin,fill=lightgray] (0,0) rectangle (1,1); \draw[black, very thick] (0.5,0.5)--(0.5,1); \end{tikzpicture} \\ \\
\begin{tikzpicture}[baseline=(current bounding box.center)]
\fill[RawSienna,draw=Black,thick,shift={(-0.05,0.05)}] (0,0)--(0,1)--(150:1)--(-150:1)--(0,0);
\draw[blue,thick] (-0.485,-0.2)--(-0.485,0.8);
\end{tikzpicture} &
\begin{tikzpicture}[baseline=(current bounding box.center)]
\fill[YellowGreen,draw=Black,thick] (0,0)--(150:1)--(0,1)--(30:1)--(0,0);
\draw[blue,thick] (0,0.5)--(0.435,0.25);
\draw[dashed] (0,0)--(0,1);
\draw[dashed] (0.87,0)--(0.87,1);
\end{tikzpicture} &
\begin{tikzpicture}[baseline=(current bounding box.center)]
\fill[Orchid,draw=Black,thick,shift={(150:1)}] (0,0)--(0,1)--(30:1)--(-30:1)--(0,0);
\end{tikzpicture} &
\begin{tikzpicture}[baseline=(current bounding box.center)]
\fill[YellowGreen,draw=Black,thick] (0,0)--(150:1)--(0,1)--(30:1)--(0,0);
\draw[blue,thick] (-0.435,0.75)--(0,0.5);
\draw[dashed] (0,0)--(0,1);
\draw[dashed] (-0.87,0)--(-0.87,1);
\end{tikzpicture} 
\end{tabular}
\end{equation}
where the dashed lines in the second and fourth columns are to differentiate lozenges entering a vertical slice from the left versus those exiting to the right.

\section{ Multicolored vertex models and interacting RPP} \label{sec:multicolored}
We have shown that reverse plane partitions can be represented both by a vertex model and a lozenge diagram. We now generalize the vertex models of the previous section by introducing a multicolored vertex model first described in \cite{aggarwal2023colored,LLT}. Rather than consisting of paths of a single color, the vertex model will consist of path of several different colors. Allowed configurations of paths of the same color are exactly the same as those described in Section \ref{sec:5V} but we now include an interaction between paths of different colors in the weights. This interaction creates a coupling between the different RPPs.  As similar coupling for domino tilings of the Aztec diamond was defined in \cite{CGK} and further studied in \cite{Keating2023ShufflingAF}.

While in general the vertex model can be defined for an arbitrary number of colors, we will focus on the 2 color case. We begin by defining the 2-color vertex model generalizing our previous 5-vertex models. We will then use the vertex model to construct the coupled RPPs.

\subsection{White Vertices}
We will first look at the 2-color generalization of the white vertices. Figure \ref{fig:whitevertices2} show the allowed vertices and their weights.
\begin{figure}[h!]
\begin{center}
\begin{tabular}{ccccc}
\begin{tikzpicture}
\draw (-1,-1) -- (1,-1); \draw (-1,-1) -- (-1,1);
\draw (1,1) -- (1,-1); \draw (1,1) -- (-1,1);
\end{tikzpicture}
&
\begin{tikzpicture}
\draw (-1,-1) -- (1,-1); \draw (-1,-1) -- (-1,1);
\draw (1,1) -- (1,-1); \draw (1,1) -- (-1,1);
\draw[red] (0,-1) -- (0,1);
\end{tikzpicture}
&
\begin{tikzpicture}
\draw (-1,-1) -- (1,-1); \draw (-1,-1) -- (-1,1);
\draw (1,1) -- (1,-1); \draw (1,1) -- (-1,1);
\draw[red] (-1,0) -- (1,0);
\end{tikzpicture}
&
\begin{tikzpicture}
\draw (-1,-1) -- (1,-1); \draw (-1,-1) -- (-1,1);
\draw (1,1) -- (1,-1); \draw (1,1) -- (-1,1);
\draw[red] (0,-1) -- (0,0) -- (1,0);
\end{tikzpicture}
&
\begin{tikzpicture}
\draw (-1,-1) -- (1,-1); \draw (-1,-1) -- (-1,1);
\draw (1,1) -- (1,-1); \draw (1,1) -- (-1,1);
\draw[red] (-1,0) -- (0,0) -- (0,1);
\end{tikzpicture}
\\
$1$ & $1 $ & $x$ & $x$ & $1$
\\
\begin{tikzpicture}
\draw (-1,-1) -- (1,-1); \draw (-1,-1) -- (-1,1);
\draw (1,1) -- (1,-1); \draw (1,1) -- (-1,1);
\draw[blue] (0,-1) -- (0,1);
\end{tikzpicture}
&
\begin{tikzpicture}
\draw (-1,-1) -- (1,-1); \draw (-1,-1) -- (-1,1);
\draw (1,1) -- (1,-1); \draw (1,1) -- (-1,1);
\draw[red] (0.1,-1) -- (0.1,1);
\draw[blue] (-0.1,-1) -- (-0.1,1);
\end{tikzpicture}
&
\begin{tikzpicture}
\draw (-1,-1) -- (1,-1); \draw (-1,-1) -- (-1,1);
\draw (1,1) -- (1,-1); \draw (1,1) -- (-1,1);
\draw[red] (-1,0) -- (1,0);
\draw[blue] (0,-1) -- (0,1);
\end{tikzpicture}
&
\begin{tikzpicture}
\draw (-1,-1) -- (1,-1); \draw (-1,-1) -- (-1,1);
\draw (1,1) -- (1,-1); \draw (1,1) -- (-1,1);
\draw[red] (0.1,-1) -- (0.1,0) -- (1,0);
\draw[blue] (-0.1,-1) -- (-0.1,1);
\end{tikzpicture}
&
\begin{tikzpicture}
\draw (-1,-1) -- (1,-1); \draw (-1,-1) -- (-1,1);
\draw (1,1) -- (1,-1); \draw (1,1) -- (-1,1);
\draw[red] (-1,0) -- (0.1,0) -- (0.1,1);
\draw[blue] (-0.1,-1) -- (-0.1,1);
\end{tikzpicture}
\\
 $1$ & $1 $ & $x$ & $x$ & $1$
\\
\begin{tikzpicture}
\draw (-1,-1) -- (1,-1); \draw (-1,-1) -- (-1,1);
\draw (1,1) -- (1,-1); \draw (1,1) -- (-1,1);
\draw[blue] (-1,0) -- (1,0);
\end{tikzpicture}
&
\begin{tikzpicture}
\draw (-1,-1) -- (1,-1); \draw (-1,-1) -- (-1,1);
\draw (1,1) -- (1,-1); \draw (1,1) -- (-1,1);
\draw[red] (0,-1) -- (0,1);
\draw[blue] (-1,0) -- (1,0);
\end{tikzpicture}
&
\begin{tikzpicture}
\draw (-1,-1) -- (1,-1); \draw (-1,-1) -- (-1,1);
\draw (1,1) -- (1,-1); \draw (1,1) -- (-1,1);
\draw[red] (-1,-0.1) -- (1,-0.1);
\draw[blue] (-1,0.1) -- (1,0.1);
\end{tikzpicture}
&
\begin{tikzpicture}
\draw (-1,-1) -- (1,-1); \draw (-1,-1) -- (-1,1);
\draw (1,1) -- (1,-1); \draw (1,1) -- (-1,1);
\draw[red] (0,-1) -- (0,-0.1) -- (1,-0.1);
\draw[blue] (-1,0.1) -- (1,0.1);
\end{tikzpicture}
&
\begin{tikzpicture}
\draw (-1,-1) -- (1,-1); \draw (-1,-1) -- (-1,1);
\draw (1,1) -- (1,-1); \draw (1,1) -- (-1,1);
\draw[red] (-1,-0.1) -- (0,-0.1) -- (0,1);
\draw[blue] (-1,0.1) -- (1,0.1);
\end{tikzpicture}
\\
$x$ & $xt$ & $x^2t$ & $x^2t$ & $xt$
\\
\begin{tikzpicture}
\draw (-1,-1) -- (1,-1); \draw (-1,-1) -- (-1,1);
\draw (1,1) -- (1,-1); \draw (1,1) -- (-1,1);
\draw[blue] (0,-1) -- (0,0) -- (1,0);
\end{tikzpicture}
&
\begin{tikzpicture}
\draw (-1,-1) -- (1,-1); \draw (-1,-1) -- (-1,1);
\draw (1,1) -- (1,-1); \draw (1,1) -- (-1,1);
\draw[red] (0.1,-1) -- (0.1,1);
\draw[blue] (-0.1,-1) -- (-0.1,0) -- (1,0);
\end{tikzpicture}
&
\begin{tikzpicture}
\draw (-1,-1) -- (1,-1); \draw (-1,-1) -- (-1,1);
\draw (1,1) -- (1,-1); \draw (1,1) -- (-1,1);
\draw[red] (-1,-0.1) -- (1,-0.1);
\draw[blue] (0,-1) -- (0,0.1) -- (1,0.1);   
\end{tikzpicture}
&
\begin{tikzpicture}
\draw (-1,-1) -- (1,-1); \draw (-1,-1) -- (-1,1);
\draw (1,1) -- (1,-1); \draw (1,1) -- (-1,1);
\draw[red] (0.1,-1) -- (0.1,-0.1) -- (1,-0.1);
\draw[blue] (-0.1,-1) -- (-0.1,0.1) -- (1,0.1);
\end{tikzpicture}
&
\begin{tikzpicture}
\draw (-1,-1) -- (1,-1); \draw (-1,-1) -- (-1,1);
\draw (1,1) -- (1,-1); \draw (1,1) -- (-1,1);
\draw[red] (-1,-0.1) -- (0.1,-0.1) -- (0.1,1);
\draw[blue] (-0.1,-1) -- (-0.1,0.1) -- (1,0.1);
\end{tikzpicture}
\\
$x$ & $xt$ & $x^2t$ & $x^2t$ & $xt$
\\
\begin{tikzpicture}
\draw (-1,-1) -- (1,-1); \draw (-1,-1) -- (-1,1);
\draw (1,1) -- (1,-1); \draw (1,1) -- (-1,1);
\draw[blue] (-1,0) -- (0,0) -- (0,1);
\end{tikzpicture}
&
\begin{tikzpicture}
\draw (-1,-1) -- (1,-1); \draw (-1,-1) -- (-1,1);
\draw (1,1) -- (1,-1); \draw (1,1) -- (-1,1);
\draw[red] (0.1,-1) -- (0.1,1);
\draw[blue] (-1,0) -- (-0.1,0) -- (-0.1,1);
\end{tikzpicture}
&
\begin{tikzpicture}
\draw (-1,-1) -- (1,-1); \draw (-1,-1) -- (-1,1);
\draw (1,1) -- (1,-1); \draw (1,1) -- (-1,1);
\draw[red] (-1,-0.1) -- (1,-0.1);
\draw[blue] (-1,0.1) -- (0,0.1) -- (0,1);
\end{tikzpicture}
&
\begin{tikzpicture}
\draw (-1,-1) -- (1,-1); \draw (-1,-1) -- (-1,1);
\draw (1,1) -- (1,-1); \draw (1,1) -- (-1,1);
\draw[red] (0.1,-1) -- (0.1,-0.1) -- (1,-0.1);
\draw[blue] (-1,0.1) -- (-0.1,0.1) -- (-0.1,1);
\end{tikzpicture}
&
\begin{tikzpicture}
\draw (-1,-1) -- (1,-1); \draw (-1,-1) -- (-1,1);
\draw (1,1) -- (1,-1); \draw (1,1) -- (-1,1);
\draw[red] (-1,-0.1) -- (0.1,-0.1) -- (0.1,1);
\draw[blue] (-1,0.1) -- (-0.1,0.1) -- (-0.1,1);
\end{tikzpicture}
\\
$1$ & $1$ & $x$ & $x$ & $1$
\end{tabular}
\end{center}
\caption{The allowed local configurations and the corresponding weights for the 2-color white vertices.}
\label{fig:whitevertices2}
\end{figure}
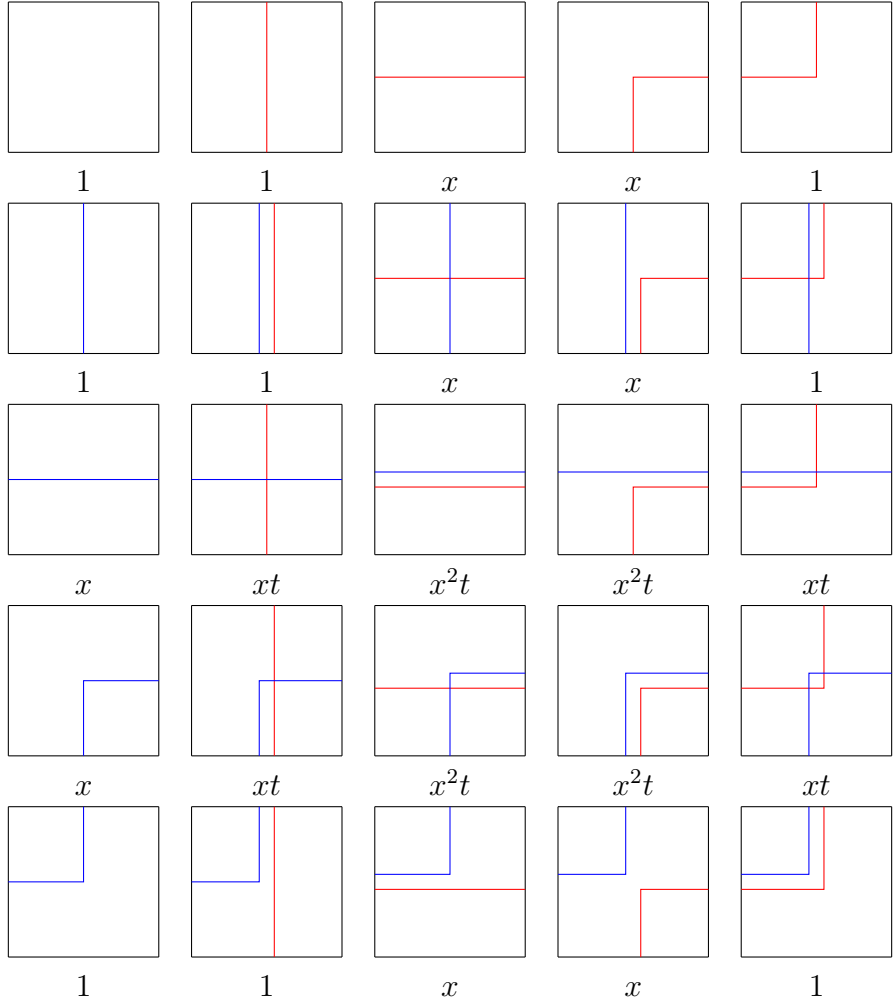
\\
Each of the colors of path is allowed to take one of the five possible vertex configurations from Figure \ref{fig:whiteweight}. The weights contain a power of a parameter $x$ whenever a path exits a vertex to the right, just as before, but now have a new parameter $t$ governing the strength of the interaction between the colors. Let $v_b$ be the vertex configuration for the blue paths and $v_r$ the vertex configuration for the red path.  Then the weights we have above can be written as
\[
L^{(2)}_{x;t}((v_b,v_r)) = L_{xt^{\delta_1}}(v_b)L_x(v_r), \qquad \delta_1 =  \mathbf{1}(\text{red is present})
\]
where $L_x(v)$ is defined in Eqn. \eqref{eq:Lmat}. In words, the interaction can be described by saying that the weight gets a factor of $t$ if a blue path exits right in a vertex in which a red path is present. Note when $t=1$ this reduces to the product of the weights is the product of the weight for each color using the white vertex weights given Figure \ref{fig:whiteweight}.

As in the 1-color case, we will consider a row of white vertices extending infinitely to the right. Now our boundary conditions for the rows will be indexed by a pair of partitions $\vec \lambda = (\lambda^{(1)},\lambda^{(2)})$, one for blue and one for red. See Example \ref{ex:whiterow2}.

\begin{example} \label{ex:whiterow2}
 As an example consider the row with bottom boundary condition given by $\vec \mu = ((2,1),(1,0))$ and top boundary condition given by $\vec \lambda = ((4,2),(4,1))$.
\begin{center}
\begin{tikzpicture}
\draw (0,0) grid (8,1);
\node[left] at (0,0.5) {$x$};

\draw[thick, blue] (3.4,0)--(3.4,0.6)--(5.4,0.6)--(5.4,1);
\draw[thick, blue] (1.4,0)--(1.4,0.6)--(2.4,0.6)--(2.4,1);
\draw[thick, red] (2.6,0)--(2.6,0.4)--(5.6,0.4)--(5.6,1);
\draw[thick, red] (0.6,0)--(0.6,0.4)--(1.6,0.4)--(1.6,1);

\node[below] at (4,0) {$\vec \mu = ((2,1),(1,0))$};
\node[above] at (4,1) {$\vec \lambda = ((4,2),(4,1))$};
\node[] at (7.5,0.5) {$\dots$};
\draw[red,fill=red] (2,0) circle (1pt);
\draw[red,fill=red] (2,1) circle (1pt);

\draw[fill=blue] (1.4,0) circle (0.1);
\draw[fill=blue] (3.4,0) circle (0.1);
\draw[fill=blue] (2.4,1) circle (0.1);
\draw[fill=blue] (5.4,1) circle (0.1);

\draw[fill=red] (0.6,0) circle (0.1);
\draw[fill=red] (2.6,0) circle (0.1);
\draw[fill=red] (1.6,1) circle (0.1);
\draw[fill=red] (5.6,1) circle (0.1);

\draw[fill=white] (4.5,0) circle (0.1);
\draw[fill=white] (5.5,0) circle (0.1);
\draw[fill=white] (6.5,0) circle (0.1);
\draw[fill=white] (7.5,0) circle (0.1);

\draw[fill=white] (0.5,1) circle (0.1);
\draw[fill=white] (3.5,1) circle (0.1);
\draw[fill=white] (4.5,1) circle (0.1);
\draw[fill=white] (6.5,1) circle (0.1);
\draw[fill=white] (7.5,1) circle (0.1);

\draw[fill=white] (0,0.5) circle (0.1);
\draw[fill=white] (8,0.5) circle (0.1);
\end{tikzpicture}
\end{center}
Here a white circle indicates no paths entering or exiting at that boundary. The weight of the row is a product of the weight of each vertex. In this example, the row has weight $x^7 t^3$. Note that the factor of $x$ in the weight is the product of what one would get from each color treating them as a 1-color vertex model. In particular, 
\[
x^7 = x^3 \cdot x^4 = x^{|\lambda^{(1)}/\mu^{(1)}|} \cdot x^{|\lambda^{(2)}/\mu^{(2)}|}.
\]
\end{example}
\noindent Note that we must have $\mu^{(i)}\preceq \lambda^{(i)}$, $i=1,2$ for there to be a valid configuration.

This vertex model still satisfies a version of the Yang-Baxter equation \cite{aggarwal2023colored,LLT}. In the one-color case, recall that the cross weights were given by
\[
  \begin{tabular}{cccccc}
     \begin{tikzpicture}[baseline=(current bounding box.center)] \draw[thin] (0,1)--(1,0);\draw[thin] (0,0)--(1,1); \node[right] at (0.5,0.5) {$z$}; \end{tikzpicture}: &    
     \begin{tikzpicture}[baseline=(current bounding box.center)] \draw[ultra thick] (0,1)--(1,0);\draw[thin] (0,0)--(1,1); \end{tikzpicture} & 
     \begin{tikzpicture}[baseline=(current bounding box.center)] \draw[ultra thick] (0,1)--(0.5,0.5)--(1,1);\draw[thin] (0,0)--(0.5,0.5)--(1,0); \end{tikzpicture} &
      \begin{tikzpicture}[baseline=(current bounding box.center)] \draw[thin] (0,1)--(0.5,0.5)--(1,1);\draw[ultra thick] (0,0)--(0.5,0.5)--(1,0); \end{tikzpicture} & 
      \begin{tikzpicture}[baseline=(current bounding box.center)] \draw[ultra thick] (0,1)--(1,0);\draw[ultra thick] (0,0)--(1,1); \end{tikzpicture} & 
      \begin{tikzpicture}[baseline=(current bounding box.center)] \draw[thin] (0,1)--(1,0);\draw[thin] (0,0)--(1,1); \end{tikzpicture}  \\
      $R_z(v)$   & $1-z$ & $z$ & $1$ & $z$ & $1$
    \end{tabular}
\]
The cross weights int the 2-color case are given by
\begin{equation} \label{eq:crossweight2}
R^{(2)}_{z;t}((v_b,v_r)) = R_{z/t^{r}}(v_b)R_z(v_r)
\end{equation}
where $v_b$ is the vertex configuration of the blue paths, $v_r$ is the configuration of the red paths, and $r = \mathbf{1}\left(\text{red path looks like } \begin{tikzpicture}[baseline=(current bounding box.center)] \draw[ultra thick, red] (0,1)--(1,0);\draw[thin] (0,0)--(1,1); \end{tikzpicture} \right)$.

For example, we have
\begin{center}
\resizebox{0.8\textwidth}{!}{
\begin{tabular}{cccc}
$w\left(
  \begin{tikzpicture}[baseline=(current bounding box.center)] 
 \draw (-1,0.5)-- (0,1.5); \draw[blue, very thick] (-1,1.5) -- (0,0.5); 
 \draw[step=1.0,black,thin] (0,0) grid (1,2); 
 \draw[red, very thick] (0.5,0)--(0.5,0.4)--(1,0.4);
 \draw[blue, very thick] (0,0.5)--(1,0.5);
 \node[above left] at (0.5,0.5) {$x$}; \node[above left] at (0.5,1.5) {$y$};
 \node[left, scale=0.7] at (-0.6,1) {$\frac{y}{x}$};
 \end{tikzpicture} 
 \right)$
 &
 $=
w\left(
 \begin{tikzpicture}[baseline=(current bounding box.center)] 
 \draw[red, very thick] (2,0.4)--(1.5,0.9)-- (1,0.4); \draw (2,1.4)--(1.55,0.95)--(1,1.5); 
 \draw[blue, very thick] (1,1.5)--(2,0.5);
 \draw[step=1.0,black,thin] (0,0) grid (1,2); 
 \draw[red, very thick] (0.5,0)--(0.5,0.4)--(1,0.4);
 \draw[blue, very thick] (0,1.5)--(1,1.5);
 \node[above left] at (0.5,0.5) {$y$}; \node[above left] at (0.5,1.5) {$x$};
  \node[right, scale=0.7] at (1.6,0.95) {$\frac{y}{x}$};
 \end{tikzpicture}
 \right)$
 &$+$ &
 $w\left(
 \begin{tikzpicture}[baseline=(current bounding box.center)] 
 \draw[red, very thick] (2,0.4) -- (1,1.4); \draw (2,1.5) -- (1,0.5); 
 \draw[blue, very thick] (1,1.5)--(2,0.5);
 \draw[step=1.0,black,thin] (0,0) grid (1,2); 
 \draw[red, very thick] (0.5,0)--(0.5,1.4)--(1,1.4);
 \draw[blue, very thick] (0,1.5)--(1,1.5);
 \node[above left] at (0.5,0.5) {$y$}; \node[above left] at (0.5,1.5) {$x$};
   \node[right, scale=0.7] at (1.6,0.95) {$\frac{y}{x}$};
 \end{tikzpicture}
 \right)$  \\ \\
 $x^2t\left(1-\frac{y}{x}\right)$ & $=  xy\left(1-\frac{y}{x}\right)$ & $+$ & $x^2t\left(1-\frac{y}{x}\right)\left(1-\frac{y}{xt}\right) $
 \end{tabular}
 }
 \end{center}

\subsection{Gray Vertices}
We also generalize the gray vertices to the 2-color case. The allowed vertices and weights are given in Figure \ref{fig:grayvertices2} below.

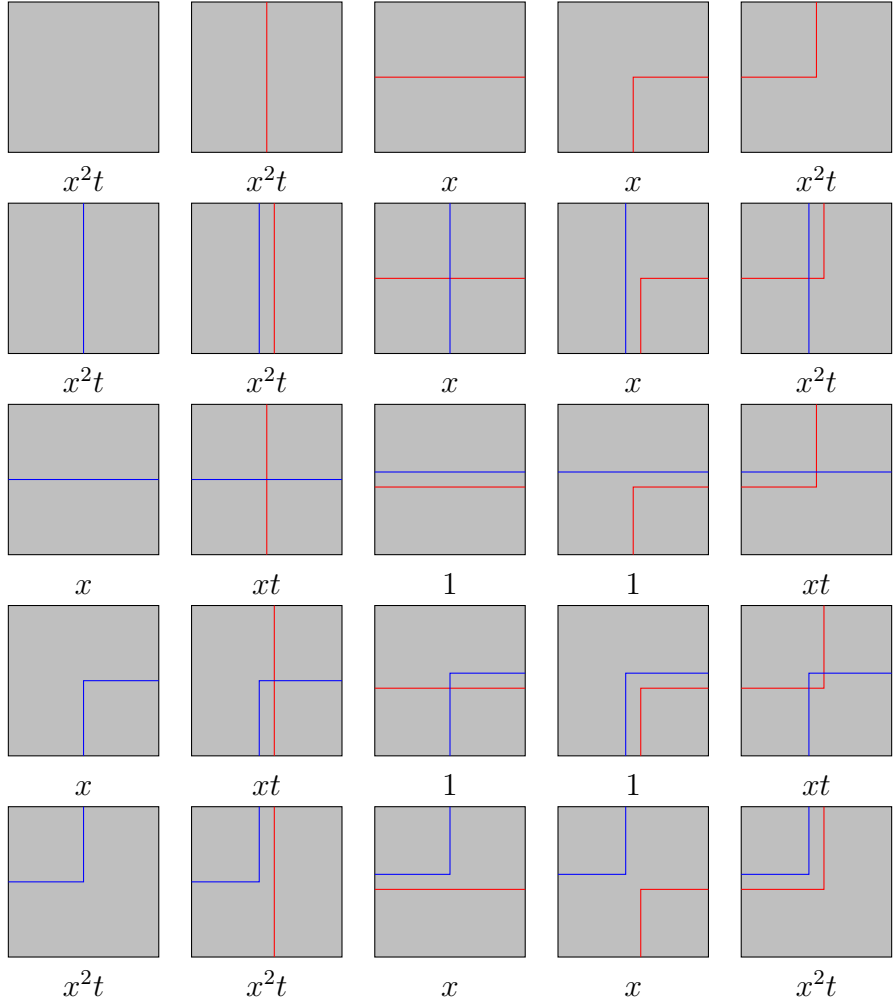
\begin{figure}[h]
\begin{center}
\begin{tabular}{ccccc}
\begin{tikzpicture}
\draw[fill=lightgray] (-1,-1) rectangle (1,1);
\end{tikzpicture}
&
\begin{tikzpicture}
\draw[fill=lightgray] (-1,-1) rectangle (1,1);
\draw[red] (0,-1) -- (0,1);
\end{tikzpicture}
&
\begin{tikzpicture}
\draw[fill=lightgray] (-1,-1) rectangle (1,1);
\draw[red] (-1,0) -- (1,0);
\end{tikzpicture}
&
\begin{tikzpicture}
\draw[fill=lightgray] (-1,-1) rectangle (1,1);
\draw[red] (0,-1) -- (0,0) -- (1,0);
\end{tikzpicture}
&
\begin{tikzpicture}
\draw[fill=lightgray] (-1,-1) rectangle (1,1);
\draw[red] (-1,0) -- (0,0) -- (0,1);
\end{tikzpicture}
\\
$x^2t$ & $x^2t$ & $x$ & $x$ & $x^2t$
\\
\begin{tikzpicture}
\draw[fill=lightgray] (-1,-1) rectangle (1,1);
\draw[blue] (0,-1) -- (0,1);
\end{tikzpicture}
&
\begin{tikzpicture}
\draw[fill=lightgray] (-1,-1) rectangle (1,1);
\draw[red] (0.1,-1) -- (0.1,1);
\draw[blue] (-0.1,-1) -- (-0.1,1);
\end{tikzpicture}
&
\begin{tikzpicture}
\draw[fill=lightgray] (-1,-1) rectangle (1,1);
\draw[red] (-1,0) -- (1,0);
\draw[blue] (0,-1) -- (0,1);
\end{tikzpicture}
&
\begin{tikzpicture}
\draw[fill=lightgray] (-1,-1) rectangle (1,1);
\draw[red] (0.1,-1) -- (0.1,0) -- (1,0);
\draw[blue] (-0.1,-1) -- (-0.1,1);
\end{tikzpicture}
&
\begin{tikzpicture}
\draw[fill=lightgray] (-1,-1) rectangle (1,1);
\draw[red] (-1,0) -- (0.1,0) -- (0.1,1);
\draw[blue] (-0.1,-1) -- (-0.1,1);
\end{tikzpicture}
\\
 $x^2t$ & $x^2t$ & $x$ & $x$ & $x^2t$
\\
\begin{tikzpicture}
\draw[fill=lightgray] (-1,-1) rectangle (1,1);
\draw[blue] (-1,0) -- (1,0);
\end{tikzpicture}
&
\begin{tikzpicture}
\draw[fill=lightgray] (-1,-1) rectangle (1,1);
\draw[red] (0,-1) -- (0,1);
\draw[blue] (-1,0) -- (1,0);
\end{tikzpicture}
&
\begin{tikzpicture}
\draw[fill=lightgray] (-1,-1) rectangle (1,1);
\draw[red] (-1,-0.1) -- (1,-0.1);
\draw[blue] (-1,0.1) -- (1,0.1);
\end{tikzpicture}
&
\begin{tikzpicture}
\draw[fill=lightgray] (-1,-1) rectangle (1,1);
\draw[red] (0,-1) -- (0,-0.1) -- (1,-0.1);
\draw[blue] (-1,0.1) -- (1,0.1);
\end{tikzpicture}
&
\begin{tikzpicture}
\draw[fill=lightgray] (-1,-1) rectangle (1,1);
\draw[red] (-1,-0.1) -- (0,-0.1) -- (0,1);
\draw[blue] (-1,0.1) -- (1,0.1);
\end{tikzpicture}
\\
$x$ & $xt$ & $1$ & $1$ & $xt$
\\
\begin{tikzpicture}
\draw[fill=lightgray] (-1,-1) rectangle (1,1);
\draw[blue] (0,-1) -- (0,0) -- (1,0);
\end{tikzpicture}
&
\begin{tikzpicture}
\draw[fill=lightgray] (-1,-1) rectangle (1,1);
\draw[red] (0.1,-1) -- (0.1,1);
\draw[blue] (-0.1,-1) -- (-0.1,0) -- (1,0);
\end{tikzpicture}
&
\begin{tikzpicture}
\draw[fill=lightgray] (-1,-1) rectangle (1,1);
\draw[red] (-1,-0.1) -- (1,-0.1);
\draw[blue] (0,-1) -- (0,0.1) -- (1,0.1);
\end{tikzpicture}
&
\begin{tikzpicture}
\draw[fill=lightgray] (-1,-1) rectangle (1,1);
\draw[red] (0.1,-1) -- (0.1,-0.1) -- (1,-0.1);
\draw[blue] (-0.1,-1) -- (-0.1,0.1) -- (1,0.1);
\end{tikzpicture}
&
\begin{tikzpicture}
\draw[fill=lightgray] (-1,-1) rectangle (1,1);
\draw[red] (-1,-0.1) -- (0.1,-0.1) -- (0.1,1);
\draw[blue] (-0.1,-1) -- (-0.1,0.1) -- (1,0.1);
\end{tikzpicture}
\\
$x$ & $xt$ & $1$ & $1$ & $xt$
\\
\begin{tikzpicture}
\draw[fill=lightgray] (-1,-1) rectangle (1,1);
\draw[blue] (-1,0) -- (0,0) -- (0,1);
\end{tikzpicture}
&
\begin{tikzpicture}
\draw[fill=lightgray] (-1,-1) rectangle (1,1);
\draw[red] (0.1,-1) -- (0.1,1);
\draw[blue] (-1,0) -- (-0.1,0) -- (-0.1,1);
\end{tikzpicture}
&
\begin{tikzpicture}
\draw[fill=lightgray] (-1,-1) rectangle (1,1);
\draw[red] (-1,-0.1) -- (1,-0.1);
\draw[blue] (-1,0.1) -- (0,0.1) -- (0,1);
\end{tikzpicture}
&
\begin{tikzpicture}
\draw[fill=lightgray] (-1,-1) rectangle (1,1);
\draw[red] (0.1,-1) -- (0.1,-0.1) -- (1,-0.1);
\draw[blue] (-1,0.1) -- (-0.1,0.1) -- (-0.1,1);
\end{tikzpicture}
&
\begin{tikzpicture}
\draw[fill=lightgray] (-1,-1) rectangle (1,1);
\draw[red] (-1,-0.1) -- (0.1,-0.1) -- (0.1,1);
\draw[blue] (-1,0.1) -- (-0.1,0.1) -- (-0.1,1);
\end{tikzpicture}
\\
$x^2t$ & $x^2t$ & $x$ & $x$ & $x^2t$
\end{tabular}
\end{center}
\caption{The allowed local configurations and the corresponding weights for the 2-color gray vertices.}
\label{fig:grayvertices2}
\end{figure}

Recall that in the one-color case the weight of a gray vertex could be as seen as coming from change of variable of the weight of a white vertex. In particular, we had 
\[
L'_x(v) = x \; L_{1/x}(v).
\]
In the 2-color case this becomes
\[
(L')^{(2)}_{x;t}((v_b,v_r)) = x^2 t \; L^{(2)}_{x;t}((v_b,v_r)), \qquad \bar x = \frac{1}{xt}.
\]
It was shown in Appendix B of \cite{CGK} that the contribution of color $i$ (where blue is color 1 and red is color 2) can be written as
\begin{equation}\label{eqn:gray_weights}
\resizebox{0.7\textwidth}{!}{
\begin{tabular}{cccccc}
Type 1 & Type 2 & Type 3 & Type 4 & Type 5 \\
  \begin{tikzpicture}[baseline=(current bounding box.center)]\draw[thin,fill=lightgray] (0,0) rectangle (1,1);\node[left] at (0,0.5) {0}; \node[right] at (1,0.5) {0};\node[above] at (0.5,1) {0};\node[below] at (0.5,0) {0};\end{tikzpicture} & \begin{tikzpicture}[baseline=(current bounding box.center)]\draw[thin,fill=lightgray] (0,0) rectangle (1,1);\draw[red, thick] (0.5,0)--(0.5,0.5)--(1,0.5);\node[left] at (0,0.5) {0}; \node[right] at (1,0.5) {1};\node[above] at (0.5,1) {0};\node[below] at (0.5,0) {1};\end{tikzpicture} & \begin{tikzpicture}[baseline=(current bounding box.center)] \draw[thin,fill=lightgray] (0,0) rectangle (1,1); \draw[red, thick] (0,0.5)--(1,0.5); \node[left] at (0,0.5) {1}; \node[right] at (1,0.5) {1};\node[above] at (0.5,1) {0};\node[below] at (0.5,0) {0};\end{tikzpicture} & \begin{tikzpicture}[baseline=(current bounding box.center)] \draw[thin,fill=lightgray] (0,0) rectangle (1,1); \draw[red, thick] (0.5,0)--(0.5,1); \node[left] at (0,0.5) {0}; \node[right] at (1,0.5) {0};\node[above] at (0.5,1) {1};\node[below] at (0.5,0) {1};\end{tikzpicture} & \begin{tikzpicture}[baseline=(current bounding box.center)] \draw[thin,fill=lightgray] (0,0) rectangle (1,1); \draw[red, thick] (0,0.5)--(0.5,0.5)--(0.5,1); \node[left] at (0,0.5) {1}; \node[right] at (1,0.5) {0};\node[above] at (0.5,1) {1};\node[below] at (0.5,0) {0};\end{tikzpicture} \\
 $xt^{\alpha_i+\beta_i}$ & $t^{\beta_i}$ & $t^{\beta_i}$ & $xt^{\alpha_i+\beta_i}$ & $xt^{\alpha_i+\beta_i}$
\end{tabular}
}
\end{equation}
where
\[ 
\begin{aligned}
&\alpha_i = \textrm{\# colors greater than $i$ of Type 1}, \\
&\beta_i = \textrm{\# colors greater than $i$ of Type 4 or 5}.
\end{aligned}
\]

We will consider rows of gray vertices that extend infinitely to the right whose bottom boundary is given by a pair of partitions $\vec \mu$, top boundary given by a pair of partitions $\vec \lambda$, with no paths entering from the left and a path of each color exiting on the right. See Example \ref{ex:grayrow2}.

\begin{example}\label{ex:grayrow2}
Consider a gray row with top boundary $\vec \lambda=((1,1),(1,1))$ with $\ell= \ell(\lambda^{(1)}) = \ell(\lambda^{(2)})=2$ and bottom boundary $\vec \mu=((3,1,1),(2,1,0))$
\[
\begin{tikzpicture}[baseline=(current bounding box.center)]
\draw[fill=lightgray] (0,0) rectangle (8,1);
\draw[step=1.0,black,thin, fill=lightgray] (0,0) grid (8,1); 
\node at (4,1.5) {$\vec \lambda=((1,1),(1,1))$};
\node at (4,-0.5) {$\vec \mu=((3,1,1),(2,1,0))$};
\node at (7.5,0.5) {$\dots$};
\draw[red,fill=red] (2,1) circle (1pt);
\draw[red,fill=red] (3,0) circle (1pt);

\draw[blue, thick] (1.4,0)--(1.4,1);
\draw[blue, thick] (2.4,0)--(2.4,1);
\draw[blue, thick] (5.4,0)--(5.4,0.6)--(7.2,0.6);
\draw[red, thick] (4.6,0)--(4.6,0.4)--(7.2,0.4);
\draw[blue, thick] (7.8,0.6)--(8,0.6);
\draw[red, thick] (7.8,0.4)--(8,0.4);
\draw[red, thick] (2.6,0)--(2.6,1);
\draw[red, thick] (0.6,0)--(0.6,0.4)--(1.6,0.4)--(1.6,1);

\draw[fill=white] (0,0.5) circle (0.1);
\draw[fill=blue] (8,0.6) circle (0.1);
\draw[fill=red] (8,0.4) circle (0.1);

\draw[fill=blue] (1.4,0) circle (0.1);
\draw[fill=blue] (2.4,0) circle (0.1);
\draw[fill=blue] (5.4,0) circle (0.1);
\draw[fill=red] (0.6,0) circle (0.1);
\draw[fill=red] (2.6,0) circle (0.1);
\draw[fill=red] (4.6,0) circle (0.1);

\draw[fill=blue] (1.4,1) circle (0.1);
\draw[fill=blue] (2.4,1) circle (0.1);
\draw[fill=red] (1.6,1) circle (0.1);
\draw[fill=red] (2.6,1) circle (0.1);

\draw[fill=white] (3.5,0) circle (0.1);
\draw[fill=white] (6.5,0) circle (0.1);
\draw[fill=white] (7.5,0) circle (0.1);

\draw[fill=white] (0.5,1) circle (0.1);
\draw[fill=white] (3.5,1) circle (0.1);
\draw[fill=white] (4.5,1) circle (0.1);
\draw[fill=white] (5.5,1) circle (0.1);
\draw[fill=white] (6.5,1) circle (0.1);
\draw[fill=white] (7.5,1) circle (0.1);

\node[left] at (0,0.5) {$x$};
\end{tikzpicture}
\]
where no paths enter on the left and a path of each color exits on the right. The row has weight $x^8t^3$. As for the 2-color white row, note that the factor of $x$ in the weight is the product of what one would get from each color treating them as a 1-color vertex model. In particular, 
\[
x^7 = x^{5} \cdot x^{3} = x^{|\mu^{(1)}/\lambda^{(1)}|+\ell} \cdot x^{|\mu^{(2)}/\lambda^{(2)}|+\ell}.
\]
\end{example}
\noindent Note that we must have $\mu^{(i)}\succeq \lambda^{(i)}$, $i=1,2$ for there to be a valid configuration.

As the gray weights are related to white weights by a change of variable, here is still a YBE between the multicolored gray and white vertices. As this is the YBE we will need, we state it precisely below.
\begin{proposition}
For any choice of  $I_1,I_2,I_3,J_1,J_2,J_3\in\{0,1\}^2$ with $1$ indicating a path present at that boundary and $0$ indicating no path. We have the following equality of of partition functions
\[
\begin{tikzpicture}[baseline=(current bounding box.center)] 
 \draw (-1,0.5) -- (0,1.5); \draw (-1,1.5) -- (0,0.5); 
 \node[right] at (-0.5,1) {$yx$};
 \draw[step=1.0,black,thin] (0,0) grid (1,2); 
 \draw[fill=lightgray] (0,0) rectangle (1,1);
 \node[left] at (-1,1.5) {$I_1$}; \node[left] at (-1,0.5) {$I_2$}; \node[below] at (0.5,0) {$I_3$};
 \node[right] at (1,1.5) {$J_2$}; \node[right] at (1,0.5) {$J_1$}; \node[above] at (0.5,2) {$J_3$};
 \node at (0.5,0.5) {$x$}; \node at (0.5,1.5) {$y$};
 \end{tikzpicture} 
 =
 \begin{tikzpicture}[baseline=(current bounding box.center)] 
 \draw (2,0.5) -- (1,1.5); \draw (2,1.5) -- (1,0.5); 
  \node[right] at (1.55,1) {$yx$};
 \draw[step=1.0,black,thin] (0,0) grid (1,2); 
 \draw[fill=lightgray] (0,1) rectangle (1,2);
 \node[left] at (0,1.5) {$I_1$}; \node[left] at (0,0.5) {$I_2$}; \node[below] at (0.5,0) {$I_3$};
 \node[right] at (2,0.5) {$J_1$}; \node[right] at (2,1.5) {$J_2$}; \node[above] at (0.5,2) {$J_3$};
 \node at (0.5,0.5) {$y$}; \node at (0.5,1.5) {$x$};
 \end{tikzpicture}.
 \] 
 where the weights for the white vertices are those given in Figure \ref{fig:whitevertices2} and the weights of the gray vertices are those given in Figure \ref{fig:grayvertices2}, and the weights for the cross vertices are given in Equation \eqref{eq:crossweight2} with $z=yx$.
\end{proposition}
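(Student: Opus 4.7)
The plan is to mimic the proof of Corollary \ref{cor:YBE} in the 2-color setting. Recall that in the 1-color case, the white-gray YBE was obtained from the white-white YBE by using $L'_x(v) = x L_{1/x}(v)$ to replace a gray row of parameter $x$ by a white row of parameter $1/x$ up to an overall prefactor, which turned the white-white YBE with cross parameter $y/x$ into the desired white-gray statement with cross parameter $yx$.

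Analogously, in the 2-color case, the identity $(L')^{(2)}_{x;t}(\cdot) = x^2 t \, L^{(2)}_{\bar x;t}(\cdot)$ with $\bar x = 1/(xt)$ expresses a gray row with parameter $x$ as a white row with parameter $\bar x$ times the scalar $x^2 t$. The starting point is the 2-color white-white YBE proved in \cite{aggarwal2023colored, LLT}. Substituting $\bar x$ for the bottom white row parameter in that YBE yields an identity where the bottom row is gray with parameter $x$ (after absorbing the $x^2 t$ scalar, which is the same on both sides and hence preserves equality) and the cross spectral parameter is an appropriate function of $y$, $x$, and $t$. It then remains to check that the cross weights arising from the substitution agree with the cross weights given by Equation \eqref{eq:crossweight2} at $z = yx$, up to a global factor independent of the internal configuration that can be divided out of the whole identity.

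The main obstacle will be the delicate bookkeeping of the $t$-powers. The cross weight $R^{(2)}_{z;t}((v_b,v_r)) = R_{z/t^r}(v_b) R_z(v_r)$ and the row weights $L^{(2)}_{x;t}((v_b,v_r)) = L_{xt^{\delta_1}}(v_b) L_x(v_r)$ both carry $t$-shifts depending on the local path configuration of the other color, and one must verify that these shifts, produced by the substitution $x \mapsto \bar x$, combine consistently across all internal configurations contributing to a given boundary, so that they can be absorbed into a single configuration-independent scalar.

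As a sanity check, or as a self-contained route when the white-white input from \cite{aggarwal2023colored,LLT} is preferred to be reproved, one may verify the identity by exhaustive case analysis over the boundary data $(I_1, I_2, I_3, J_1, J_2, J_3) \in (\{0,1\}^2)^6$. Separate conservation of blue paths and of red paths across each diagram forces most of the $4^6$ cases to be trivial or zero, leaving a small finite list of nontrivial boundary tuples; for each such tuple one enumerates the finitely many allowed internal path configurations on both sides and compares the products of local weights read off from Figures \ref{fig:whitevertices2} and \ref{fig:grayvertices2} against the cross weights from Equation \eqref{eq:crossweight2}.
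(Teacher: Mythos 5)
The paper's own proof of this proposition is a one-line citation to Proposition 6.3 of \cite{LLT}, so your substitution argument is a genuinely different, self-contained route; it is the natural $2$-color analogue of how the paper itself obtains the $1$-color white--gray YBE (Corollary \ref{cor:YBE}) from the white--white one, and either it or your fallback exhaustive check would make the result independent of the external reference. The core of the argument is sound: the relation $(L')^{(2)}_{x;t}(v)=x^2t\,L^{(2)}_{\bar x;t}(v)$ with $\bar x=1/(xt)$ does hold configuration-by-configuration, and since exactly one bottom-row vertex appears on each side of the YBE, converting it from white (parameter $\bar x$) to gray (parameter $x$) rescales both sides by the same factor $x^2t$ and preserves the equality.

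However, the step you defer --- checking that the cross weights produced by the substitution agree with Equation \eqref{eq:crossweight2} at $z=yx$ ``up to a global configuration-independent factor'' --- does not close as stated. The substitution sends the white--white spectral parameter $y/x$ to $y/\bar x=xyt$, so what you obtain is the YBE with cross weights $R^{(2)}_{xyt;t}$, and $R^{(2)}_{xyt;t}$ is \emph{not} a configuration-independent multiple of $R^{(2)}_{xy;t}$ (the empty cross has weight $1$ in both, while the double top-bounce has weight $(xyt)^2$ versus $(xy)^2$). Direct evaluation of small boundary data (e.g.\ both colors entering at $I_1$ and exiting at $J_2$: the left side sums to $x^2y^2t^2$ while the right side is $z^2$) confirms that the identity holds with $z=xyt$ and fails with $z=xy$; note also that the prefactor $(1-xy)(1-xyt)$ in Lemma \ref{lem:whitegrayswap2} is exactly $R^{(2)}_{xyt;t}$ evaluated on the doubly-crossed configuration, which is precisely what your substitution predicts. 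So your method is the right one, but carrying out the deferred bookkeeping honestly forces a correction of the spectral parameter in the statement from $z=yx$ to $z=yxt$, rather than allowing the mismatch to be absorbed into a scalar.
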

\begin{proof}
See Proposition 6.3 in \cite{LLT}.
\end{proof}

\subsection{Interacting RPPs}
\label{sec:overlappingRPPs}

We can now put together our white rows and gray rows in the same fashion as we did for the 1-color case:
\[
\begin{tikzpicture}[baseline=(current bounding box.center)]
\draw[fill=lightgray] (1,1) rectangle (8,2);
\draw[fill=lightgray] (1,3) rectangle (8,4);
\draw[fill=lightgray] (1,5) rectangle (8,6);
\draw (1,0) grid (8,6);

\draw[fill=white] (1,0.5) circle (3pt);
\draw[fill=white] (1,1.5) circle (3pt);
\draw[fill=white] (1,2.5) circle (3pt);
\draw[fill=white] (1,3.5) circle (3pt);
\draw[fill=white] (1,4.5) circle (3pt);
\draw[fill=white] (1,5.5) circle (3pt);

\draw[fill=white] (8,0.5) circle (3pt);
\draw[fill=blue] (8,1.6) circle (3pt);
\draw[fill=red] (8,1.4) circle (3pt);
\draw[fill=white] (8,2.5) circle (3pt);
\draw[fill=blue] (8,3.6) circle (3pt);
\draw[fill=red] (8,3.4) circle (3pt);
\draw[fill=white] (8,4.5) circle (3pt);
\draw[fill=blue] (8,5.6) circle (3pt);
\draw[fill=red] (8,5.4) circle (3pt);

\draw[fill=blue] (1.4,0) circle (3pt);
\draw[fill=red] (1.6,0) circle (3pt);
\draw[fill=blue] (2.4,0) circle (3pt);
\draw[fill=red] (2.6,0) circle (3pt);
\draw[fill=blue] (3.4,0) circle (3pt);
\draw[fill=red] (3.6,0) circle (3pt);
\draw[fill=white] (4.5,0) circle (3pt);
\draw[fill=white] (5.5,0) circle (3pt);
\draw[fill=white] (6.5,0) circle (3pt);
\draw[fill=white] (7.5,0) circle (3pt);

\draw[fill=white] (1.5,6) circle (3pt);
\draw[fill=white] (2.5,6) circle (3pt);
\draw[fill=white] (3.5,6) circle (3pt);
\draw[fill=white] (4.5,6) circle (3pt);
\draw[fill=white] (5.5,6) circle (3pt);
\draw[fill=white] (6.5,6) circle (3pt);
\draw[fill=white] (7.5,6) circle (3pt);

\draw[red,fill=red] (4,0) circle (1pt);
\draw[red,fill=red] (4,1) circle (1pt);
\draw[red,fill=red] (3,2) circle (1pt);
\draw[red,fill=red] (3,3) circle (1pt);
\draw[red,fill=red] (2,4) circle (1pt);
\draw[red,fill=red] (2,5) circle (1pt);
\draw[red,fill=red] (1,6) circle (1pt);

\node at (7.5,0.5) {$\dots$};
\node at (7.5,1.5) {$\dots$};
\node at (7.5,2.5) {$\dots$};
\node at (7.5,3.5) {$\dots$};
\node at (7.5,4.5) {$\dots$};
\node at (7.5,5.5) {$\dots$};
\end{tikzpicture}
 \]
 where the order of the white and gray rows is determined by the shape of the RPPs as is Section \ref{sec:vertexRPP}. Note that for each color we have a bijection between possible path configurations and RPPs of shape $\lambda$. Thus we immediately get a bijection between configurations of the 2-colored vertex model and pairs of RPPs $(\Lambda,\Lambda')$ of the same shape. Here $\Lambda$ corresponds to the blue paths and $\Lambda'$ corresponds to the red paths. We begin with an example of the construction.
 
\begin{example} \label{ex:2colorRPP}
Below is an example of a pair of RPPs of the same shape (top) followed by their corresponding lozenge tiling (middle) and vertex model configuration (bottom).
\begin{center}
\[
{
\begin{ytableau}
2 \\
1 & 3 \\
0 & 1 & 1
\end{ytableau}\hspace{90pt}
\begin{ytableau}
2 \\
1 & 2 \\
1 & 2 & 3
\end{ytableau}}
\]
\end{center}

\begin{center}
\[
\begin{tabular}{cc}
\begin{tikzpicture}[scale=0.8]
\reverseplanepartition{3}{{2},{1,3},{0,1,1}}
\fill[white,fill opacity=0.5] (-0.9,-2.1) rectangle (4.4,3.1);
\draw[blue,thick,shift={(0,-1)},shift={(150:0.5)}] (0,0)--(0,2);
\draw[blue,thick,shift={(0,1.5)}] (-30:0.5)--(0,0)--(-150:0.5);
\draw[blue,thick,shift={(30:0.5)}] (0,1)--(0,0);
\draw[blue,thick,shift={(30:1)},shift={(0,-0.5)}] (150:0.5)--(0,0)--(30:0.5);
\draw[blue,thick,shift={(30:1.5)}] (0,-0.5)--(0,1.5);
\draw[blue,thick,shift={(30:2)},shift={(0,1.5)}] (-30:0.5)--(0,0)--(-150:0.5);
\draw[blue,thick,shift={(30:2.5)}] (0,1)--(0,-1);
\draw[blue,thick,shift={(30:3)},shift={(0,-1.5)}] (150:0.5)--(0,0)--(30:1);
\draw[blue,thick,shift={(30:2.5)},shift={(-30:2)}] (150:0.5)--(0,0)--(0,-1);
\draw[blue,thick,shift={(-30:2)},shift={(0,-0.5)}] (-30:0.5)--(0,0)--(-150:0.5);
\end{tikzpicture} & 
\begin{tikzpicture}[scale=0.8]
\reverseplanepartition{3}{{2},{1,2},{1,2,3}}
\fill[white, fill opacity=0.5] (-0.9,-2.1) rectangle (4.4,3.1);
\draw[Red,thick,shift={(0,-1)},shift={(150:0.5)}] (0,0)--(0,2);
\draw[Red,thick,shift={(0,1.5)}] (-30:0.5)--(0,0)--(-150:0.5);
\draw[Red,thick,shift={(30:0.5)}] (0,1)--(0,0);
\draw[Red,thick,shift={(30:1)},shift={(0,-0.5)}] (150:0.5)--(0,0)--(30:0.5);
\draw[Red,thick,shift={(30:1.5)}] (0,-0.5)--(0,0.5);
\draw[Red,thick,shift={(30:2)},shift={(0,0.5)}] (-30:1)--(0,0)--(-150:0.5);
\draw[Red,thick,shift={(30:3)},shift={(0,-0.5)}] (0,0)--(30:0.5);
\draw[Red,thick,shift={(30:3.5)},shift={(0,0.5)}] (0,-1)--(0,0)--(30:0.5);
\draw[Red,thick,shift={(30:4.5)}] (150:0.5)--(0,0)--(0,-3);
\draw[Red,thick,shift={(-30:2)},shift={(0,0.5)}] (-30:0.5)--(0,0)--(-150:0.5);
\draw[Red,thick,shift={(-30:1.5)}] (0,0)--(0,-1);
\draw[Red,thick,shift={(-30:2.5)}] (0,0.5)--(0,-0.5);
\end{tikzpicture}
\end{tabular}
\]
\end{center}

\begin{center}
\[
\begin{tikzpicture}[baseline=(current bounding box.center)]
\draw[fill=lightgray] (1,1) rectangle (8,2);
\draw[fill=lightgray] (1,3) rectangle (8,4);
\draw[fill=lightgray] (1,5) rectangle (8,6);
\draw (1,0) grid (8,6);

\draw[red,very thick] (1.4,0)--(1.4,2.4)--(2.4,2.4)--(2.4,3.4)--(3.4,3.4)--(3.4,4.4)--(4.4,4.4)--(4.4,5.4)--(7.2,5.4);
\draw[red,very thick] (7.8,5.4)--(8,5.4);
\draw[red,very thick] (2.4,0)--(2.4,1.4)--(3.4,1.4)--(3.4,2.4)--(4.4,2.4)--(4.4,3.4)--(7.2,3.4);
\draw[red,very thick] (7.8,3.4)--(8,3.4);
\draw[red,very thick] (3.4,0)--(3.4,0.4)--(5.4,0.4)--(5.4,1.4)--(7.2,1.4);
\draw[red,very thick] (7.8,1.4)--(8,1.4);
\draw[blue,very thick] (1.6,0)--(1.6,3.6)--(2.6,3.6)--(2.6,5.6)--(7.2,5.6);
\draw[blue,very thick] (7.8,5.6)--(8,5.6);
\draw[blue,very thick] (2.6,0)--(2.6,1.6)--(3.6,1.6)--(3.6,2.6)--(5.6,2.6)--(5.6,3.6)--(7.2,3.6);
\draw[blue,very thick] (7.8,3.6)--(8,3.6);
\draw[blue,very thick] (3.6,0)--(3.6,0.6)--(5.6,0.6)--(5.6,1.6)--(7.2,1.6);
\draw[blue,very thick] (7.8,1.6)--(8,1.6);

\draw[fill=white] (1,0.5) circle (3pt);
\draw[fill=white] (1,1.5) circle (3pt);
\draw[fill=white] (1,2.5) circle (3pt);
\draw[fill=white] (1,3.5) circle (3pt);
\draw[fill=white] (1,4.5) circle (3pt);
\draw[fill=white] (1,5.5) circle (3pt);

\draw[fill=white] (8,0.5) circle (3pt);
\draw[fill=blue] (8,1.6) circle (3pt);
\draw[fill=red] (8,1.4) circle (3pt);
\draw[fill=white] (8,2.5) circle (3pt);
\draw[fill=blue] (8,3.6) circle (3pt);
\draw[fill=red] (8,3.4) circle (3pt);
\draw[fill=white] (8,4.5) circle (3pt);
\draw[fill=blue] (8,5.6) circle (3pt);
\draw[fill=red] (8,5.4) circle (3pt);

\draw[fill=blue] (1.4,0) circle (3pt);
\draw[fill=red] (1.6,0) circle (3pt);
\draw[fill=blue] (2.4,0) circle (3pt);
\draw[fill=red] (2.6,0) circle (3pt);
\draw[fill=blue] (3.4,0) circle (3pt);
\draw[fill=red] (3.6,0) circle (3pt);
\draw[fill=white] (4.5,0) circle (3pt);
\draw[fill=white] (5.5,0) circle (3pt);
\draw[fill=white] (6.5,0) circle (3pt);
\draw[fill=white] (7.5,0) circle (3pt);

\draw[fill=white] (1.5,6) circle (3pt);
\draw[fill=white] (2.5,6) circle (3pt);
\draw[fill=white] (3.5,6) circle (3pt);
\draw[fill=white] (4.5,6) circle (3pt);
\draw[fill=white] (5.5,6) circle (3pt);
\draw[fill=white] (6.5,6) circle (3pt);
\draw[fill=white] (7.5,6) circle (3pt);

\draw[red,fill=red] (4,0) circle (1pt);
\draw[red,fill=red] (4,1) circle (1pt);
\draw[red,fill=red] (3,2) circle (1pt);
\draw[red,fill=red] (3,3) circle (1pt);
\draw[red,fill=red] (2,4) circle (1pt);
\draw[red,fill=red] (2,5) circle (1pt);
\draw[red,fill=red] (1,6) circle (1pt);

\draw node at (0.5, 0.5) {$x_1$};
\draw node at (0.5, 1.5) {$x_2$};
\draw node at (0.5, 2.5) {$x_3$};
\draw node at (0.5, 3.5) {$x_4$};
\draw node at (0.5, 4.5) {$x_5$};
\draw node at (0.5, 5.5) {$x_6$};

\node at (7.5,0.5) {$\dots$};
\node at (7.5,1.5) {$\dots$};
\node at (7.5,2.5) {$\dots$};
\node at (7.5,3.5) {$\dots$};
\node at (7.5,4.5) {$\dots$};
\node at (7.5,5.5) {$\dots$};
\end{tikzpicture}
 \]
\end{center}
The fact that we have a bijection between pairs of RPPs of the same shape and 2-color vertex model configurations follows from the 1-color case applied to each color.
\end{example}

\subsubsection{Understanding the weights}
We are left to understand how the vertex model weights translate to the coupled RPPs. The contributions from the $x_i$ behave just as in the 1-color case.
\begin{lemma}
Consider the bijection between pairs of RPPs $(\Lambda,\Lambda')$ of shape $\lambda$ of vertex model configurations where $\Lambda$ corresponds to the blue paths and $\Lambda'$ corresponds to the red paths.
If one chooses the weight parameters of the vertex model to be $x_i=q^{\pm i}$ where we take $+$ for a gray row and $-$ for a white row then the weight of a vertex model configuration is proportional to $q^{|\Lambda|+|\Lambda'|}$.
\end{lemma}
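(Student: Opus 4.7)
The plan is to show that the 2-color vertex weights factor as a product of two 1-color weights times a power of $t$ that does not involve any $x_i$, and then apply Theorem \ref{thm:bij1color} to each color independently.

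For white vertices, the defining formula $L^{(2)}_{x;t}((v_b,v_r)) = L_{xt^{\delta_1}}(v_b) L_x(v_r)$ rewrites as $L_x(v_b) L_x(v_r) \cdot t^{\delta_1 e_b(v_b)}$, where $e_b(v_b) \in \{0,1\}$ records whether the blue path exits to the right at the vertex. Thus the $x$-content of every 2-color white weight is exactly the product of the two 1-color white weights from Figure \ref{fig:whiteweight}. For gray vertices, I would go through Equation \eqref{eqn:gray_weights} case by case: for each of the five types and each color $i$, the $x$-power matches the corresponding 1-color gray weight in Figure \ref{fig:grayweight}, and the remaining $t^{\alpha_i+\beta_i}$ or $t^{\beta_i}$ factors are independent of $x$.

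These checks together give $w(\mathcal{C}) = w_b(\mathcal{C})\, w_r(\mathcal{C})\, t^{N(\mathcal{C})}$, where $w_b$ and $w_r$ are the 1-color weights of the blue and red sub-configurations obtained by forgetting the other color (with the same parameters $x_i$), and $N(\mathcal{C})$ is a non-negative integer depending only on the configuration, not on the $x_i$. Under the bijection of the preceding subsection, the blue paths correspond to the RPP $\Lambda$ and the red paths to the RPP $\Lambda'$, both of shape $\lambda$. Setting $x_i = q^{\pm i}$ as in Theorem \ref{thm:bij1color} and applying that theorem to each color separately yields $w_b(\mathcal{C}) = q^{|\Lambda|}/A_\lambda(q)$ and $w_r(\mathcal{C}) = q^{|\Lambda'|}/A_\lambda(q)$, so that $w(\mathcal{C}) = A_\lambda(q)^{-2}\, t^{N(\mathcal{C})}\, q^{|\Lambda|+|\Lambda'|}$, which is the claimed proportionality to $q^{|\Lambda|+|\Lambda'|}$ with a proportionality factor that is independent of the volumes of $\Lambda$ and $\Lambda'$.

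The main obstacle is the bookkeeping on the gray side: one must confirm from Equation \eqref{eqn:gray_weights} that the color-interaction exponents $\alpha_i$ and $\beta_i$ only ever appear in $t$ and never modify the $x$-power, so that the $x$-content of the 2-color gray weight truly factors color by color. Once this factorization is in hand, the conclusion is an immediate consequence of the single-color Theorem \ref{thm:bij1color}.
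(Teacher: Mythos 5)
Your proposal is correct and follows the same route as the paper: the paper's proof likewise observes that the $x$-content of each 2-color vertex weight factors as the product of the two 1-color weights (with the $t$-factors kept separate) and then invokes Theorem \ref{thm:bij1color} color by color, noting that the resulting proportionality constant depends on $t$. Your version simply spells out the factorization checks for the white and gray weights that the paper leaves implicit.
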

\begin{proof}
This follows from the 1-color case, Theorem \ref{thm:bij1color}, as the weight coming from the $x_i$'s in the 2-color case is the product of the weight coming from the $x_i$'s for blue with those from red where we view each color individually as a 1-color vertex model. Now, however, the constant of proportionality will depend on the interaction strength $t$.
\end{proof}

Understanding the weight coming from the interactions is more difficult.  We break the contribution of the interaction strength $t$ to the weight of the vertex model into two parts: the \emph{trivial contribution} which appears the same in every configuration and does not depend on the resulting RPPs, and the \emph{non-trivial contribution} which does depend on the specifics of the configurations. 

The following lemma gives the trivial contribution.
\begin{lemma}
Suppose we have $\ell$ paths of each color in our vertex model. Then the trivial contribution of the interaction strength $t$ to the weight is given by $t^{\frac{\ell(\ell-1)}{2}}$.
\end{lemma}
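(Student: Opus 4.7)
The plan is to compute the $t$-contribution directly in the reference configuration $\mathcal{C}_0^{(2)}$, defined as the $2$-color configuration in which both blue and red paths follow the same trivial single-color trajectory $\mathcal{C}_0$ from the proof of Theorem~\ref{thm:bij1color}. In $\mathcal{C}_0^{(2)}$, each path of either color goes straight up from the bottom until it must turn right to exit in the appropriate gray row, so the blue and red trajectories coincide; this configuration corresponds to the pair of zero reverse plane partitions.

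First, I will identify which vertex types in $\mathcal{C}_0^{(2)}$ contribute extra factors of $t$ relative to the product of the $1$-color weights for blue and red separately. For white vertices, the formula $L^{(2)}_{x;t}(v_b,v_r) = L_{xt^{\delta_1}}(v_b) L_x(v_r)$ shows that an extra $t$ arises only when blue exits to the right (corner BR or horizontal) in the presence of a red path; since paths in $\mathcal{C}_0$ travel only vertically within white rows, no $t$-factors arise from white vertices in $\mathcal{C}_0^{(2)}$. For gray vertices, a direct inspection of Figure~\ref{fig:grayvertices2} shows that the ratio of the $2$-color weight to the product of $1$-color weights equals $t$ precisely at vertices of type ``both empty'', ``both vertical'', or ``both corner TL''. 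In $\mathcal{C}_0^{(2)}$ paths never turn from horizontal to vertical, so no corner TL vertices appear; moreover, the paths in each gray row enter at the leftmost vertex columns without gaps, so no ``both empty'' vertices appear between the left boundary and the leftmost path. Hence all $t$-contributions in $\mathcal{C}_0^{(2)}$ come from ``both vertical'' gray vertices.

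Next, I will count these vertices. In the $i$-th gray row from the top of the vertex model (corresponding to the part $\lambda_i$), $i$ paths of each color enter from below and $i-1$ continue upward. In $\mathcal{C}_0^{(2)}$ the $i-1$ upward-going paths of each color lie in the same columns, producing $i-1$ ``both vertical'' gray vertices per row. Summing over all $\ell$ gray rows yields
\begin{equation*}
\sum_{i=1}^{\ell}(i-1) \;=\; \frac{\ell(\ell-1)}{2},
\end{equation*}
which gives the claimed trivial contribution of $t^{\ell(\ell-1)/2}$.

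The main conceptual step remaining is to justify that this is the \emph{trivial} contribution appearing in every configuration, rather than merely the $t$-weight of the single reference configuration $\mathcal{C}_0^{(2)}$. The natural approach is to observe that any other $2$-color configuration is reachable from $\mathcal{C}_0^{(2)}$ by a sequence of corner flips on the individual colors, and to verify that each such flip only introduces additional $t$-factors beyond the baseline $t^{\ell(\ell-1)/2}$; those additional factors will comprise the non-trivial contribution $g(\Lambda,\Lambda')$ defined in the subsequent subsection, with $g = 0$ for the reference configuration $\mathcal{C}_0^{(2)}$.
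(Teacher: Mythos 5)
Your computation of the $t$-exponent of the reference configuration $\mathcal{C}_0^{(2)}$ is correct (each gray row with $m$ entering paths contributes $m-1$ ``both vertical'' vertices, and these sum to $\ell(\ell-1)/2$), but it does not by itself prove the lemma. The lemma asserts that $t^{\ell(\ell-1)/2}$ is the \emph{trivial} contribution, i.e.\ a factor appearing identically in \emph{every} configuration; evaluating a single configuration only computes that configuration's total $t$-weight. You acknowledge this and defer the configuration-independence to a corner-flip argument, but that argument is not carried out, and its key claim --- that each flip ``only introduces additional $t$-factors beyond the baseline'' --- is false as stated: since $g(\Lambda,\Lambda')$ can decrease under a single flip (flips destroy coupled pairs as well as create them), the $t$-exponent is not monotone along flip sequences. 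What your plan actually requires is the exact identity ``$t$-exponent $=\ell(\ell-1)/2+g(\Lambda,\Lambda')$'' for every configuration, which is the content of Theorem \ref{thm:bij2color}; in effect you would be assuming the harder statement to prove the easier one.

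The missing idea is to identify the trivial factors \emph{locally and uniformly} in an arbitrary configuration rather than in a reference one. By \eqref{eqn:gray_weights}, every gray vertex carries a factor $t^{\beta_1}$ in the blue contribution, with $\beta_1=1$ exactly when the red path at that vertex is of Type $4$ or $5$, i.e.\ exactly at the unique vertex where a red path exits through the top of that gray row. The \emph{number} of red paths exiting the top of the $k$-th gray row from the bottom is $\ell-k$ in every configuration (only their positions vary), so these factors always multiply to $t^{\sum_k(\ell-k)}=t^{\ell(\ell-1)/2}$; all remaining powers of $t$ (the $t^{\alpha_1}$ factors in gray rows and the $t^{\delta_1}$ factors in white rows) genuinely depend on the local configuration. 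I would also flag a smaller inaccuracy: your assertion that the ratio of the $2$-color gray weight to the product of $1$-color weights equals $t$ ``precisely'' for both-empty, both-vertical and both-corner-TL vertices is false in general (for instance blue horizontal with red vertical also has ratio $t$); it happens to hold on the diagonal vertex types occurring in $\mathcal{C}_0^{(2)}$, which is all you use, but the statement should be restricted accordingly.
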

\begin{proof}
    At the start of our vertex diagram we will have $\ell$ many paths of each color. At each gray row, one path of each will be exit to the right of the row. This means that we will have $\ell-1$ many paths of each color exiting through the top of the row. By the way the weights are defined in Eqn. \eqref{eqn:gray_weights}, anytime a red path exits through the top of a gray row we will get a factor of a $t$ in the weight, regardless of the behavior of the blue paths. We see that the red paths exiting the top of the gray rows will contribute $(\ell-1)+ (\ell-2) + \ldots +1 = \frac{\ell(\ell-1)}{2}$ powers of $t$ to the weight.
    
    Note that any other powers of $t$ in the weight will depend on the specifics of the local configuration.
\end{proof}

To handle the non-trivial contributions of the interaction we first make some definitions.
 \begin{definition} Let $(\Lambda,\Lambda')$ be a pair of RPPs of the same shape. Suppose we superimpose the corresponding lozenge tilings one on top of the other. We say the a pair of lozenges form a \emph{coupled pair} if it takes the form
\begin{equation} \label{eq:coupledpair}
\begin{tabular}{ccccc}
\begin{tikzpicture}[fill opacity=0.5, baseline=(current bounding box.center)]
    \fill[YellowGreen,draw=Red,thick] (0,0)--(150:1)--(0,1)--(30:1)--(0,0);
    \fill[Orchid,draw=Blue,thick,shift={(150:1)}] (0,0)--(0,1)--(30:1)--(-30:1)--(0,0);
    \draw[blue,thick] (-0.435,0.25)--(-0.435,1.25);
    \draw[red,thick] (-0.435,0.25)--(0,0.5);
\end{tikzpicture},&\hspace{0.5cm}
\begin{tikzpicture}[fill opacity=0.5, baseline=(current bounding box.center)]
    \fill[Orchid,draw=Red,thick] (0,0)--(0,1)--(30:1)--(-30:1)--(0,0);
    \fill[Orchid,draw=Blue,thick,shift={(0.05,0.05)}] (0,0)--(0,1)--(30:1)--(-30:1)--(0,0);
   \draw[blue,thick,shift={(0.05,0.05)}] (0.435,-0.25)--(0.435,0.75);
   \draw[red,thick] (0.435,-0.25)--(0.435,0.75);
\end{tikzpicture},&\hspace{0.5cm}
\begin{tikzpicture}[fill opacity=0.5, baseline=(current bounding box.center)]
    \fill[RawSienna,draw=Red,thick] (0,0)--(0,1)--(150:1)--(-150:1)--(0,0);
    \fill[RawSienna,draw=Blue,thick,shift={(-0.05,0.05)}] (0,0)--(0,1)--(150:1)--(-150:1)--(0,0);
    \draw[blue,thick,shift={(-0.05,0.05)}] (-0.435,-0.25)--(-0.435,0.75);
    \draw[red,thick] (-0.435,-0.25)--(-0.435,0.75);
\end{tikzpicture}, &\hspace{0.5cm} \text{or} & \hspace{0.5cm}
\begin{tikzpicture}[fill opacity=0.5, baseline=(current bounding box.center)]
    \fill[RawSienna,draw=Red,thick] (0,0)--(0,1)--(150:1)--(-150:1)--(0,0);
    \fill[YellowGreen,draw=Blue,thick] (0,0)--(150:1)--(0,1)--(30:1)--(0,0);
    \draw[red,thick] (-0.435,-0.25)--(-0.435,0.75);
    \draw[blue,thick] (-0.435,0.75)--(0,0.5);
\end{tikzpicture}
\\
Type 1 & Type 2 & Type 3 & & Type 4
\end{tabular}
\end{equation}
when we superimpose them. We define $g(\Lambda,\Lambda')$ to be the number of coupled pairs between the pair of RPPs. 
\end{definition}\label{def:coupledLozenges}

\begin{example}
Continuing with the RPP in Example \ref{ex:2colorRPP}, superimposing the tilings gives
\[
\begin{tikzpicture}[scale=0.7, baseline=(current bounding box.center)]\reverseplanepartition{3}{{2},{1,3},{0,1,1}}
\fill[white,fill opacity=0.5] (-0.9,-2.1) rectangle (4.4,3.1);
\draw[blue,thick,shift={(0,-1)},shift={(150:0.5)}] (0,0)--(0,2);
\draw[blue,thick,shift={(0,1.5)}] (-30:0.5)--(0,0)--(-150:0.5);
\draw[blue,thick,shift={(30:0.5)}] (0,1)--(0,0);
\draw[blue,thick,shift={(30:1)},shift={(0,-0.5)}] (150:0.5)--(0,0)--(30:0.5);
\draw[blue,thick,shift={(30:1.5)}] (0,-0.5)--(0,1.5);
\draw[blue,thick,shift={(30:2)},shift={(0,1.5)}] (-30:0.5)--(0,0)--(-150:0.5);
\draw[blue,thick,shift={(30:2.5)}] (0,1)--(0,-1);
\draw[blue,thick,shift={(30:3)},shift={(0,-1.5)}] (150:0.5)--(0,0)--(30:1);
\draw[blue,thick,shift={(30:2.5)},shift={(-30:2)}] (150:0.5)--(0,0)--(0,-1);
\draw[blue,thick,shift={(-30:2)},shift={(0,-0.5)}] (-30:0.5)--(0,0)--(-150:0.5);
\draw[black,thick,shift={(150:1)}] (0,0)--(0,1)--(30:1)--(-30:1)--(0,0);
\draw[black,thick,shift={(-150:1)}] (0,0)--(0,1)--(30:1)--(-30:1)--(0,0);
\draw[black,thick,shift={(-150:-1)}] (0,0)--(0,1)--(30:1)--(-30:1)--(0,0);
\draw[black,thick,shift={(0.87,1.5)}] (0,0)--(0,1)--(30:1)--(-30:1)--(0,0);
\draw[black,thick,shift={(0.87,0.5)}] (0,0)--(0,1)--(150:1)--(-150:1)--(0,0);
\draw[black,thick,shift={(4.35,-0.5)}] (0,0)--(0,1)--(150:1)--(-150:1)--(0,0);
\end{tikzpicture}
\hspace{1cm}
\begin{tikzpicture}[scale=0.7, baseline=(current bounding box.center)]\reverseplanepartition{3}{{2},{1,2},{1,2,3}}
\fill[white, fill opacity=0.5] (-0.9,-2.1) rectangle (4.4,3.1);
\draw[Red,thick,shift={(0,-1)},shift={(150:0.5)}] (0,0)--(0,2);
\draw[Red,thick,shift={(0,1.5)}] (-30:0.5)--(0,0)--(-150:0.5);
\draw[Red,thick,shift={(30:0.5)}] (0,1)--(0,0);
\draw[Red,thick,shift={(30:1)},shift={(0,-0.5)}] (150:0.5)--(0,0)--(30:0.5);
\draw[Red,thick,shift={(30:1.5)}] (0,-0.5)--(0,0.5);
\draw[Red,thick,shift={(30:2)},shift={(0,0.5)}] (-30:1)--(0,0)--(-150:0.5);
\draw[Red,thick,shift={(30:3)},shift={(0,-0.5)}] (0,0)--(30:0.5);
\draw[Red,thick,shift={(30:3.5)},shift={(0,0.5)}] (0,-1)--(0,0)--(30:0.5);
\draw[Red,thick,shift={(30:4.5)}] (150:0.5)--(0,0)--(0,-3);
\draw[Red,thick,shift={(-30:2)},shift={(0,0.5)}] (-30:0.5)--(0,0)--(-150:0.5);
\draw[Red,thick,shift={(-30:1.5)}] (0,0)--(0,-1);
\draw[Red,thick,shift={(-30:2.5)}] (0,0.5)--(0,-0.5);
\draw[black,thick,shift={(150:1)}] (0,0)--(0,1)--(30:1)--(-30:1)--(0,0);
\draw[black,thick,shift={(-150:1)}] (0,0)--(0,1)--(30:1)--(-30:1)--(0,0);
\draw[black,thick,shift={(-150:-1)}] (0,0)--(0,1)--(30:1)--(-30:1)--(0,0);
\draw[black,thick,shift={(0.87,0.5)}] (0,0)--(0,1)--(150:1)--(-150:1)--(0,0);
\draw[black,thick,shift={(4.35,-0.5)}] (0,0)--(0,1)--(150:1)--(-150:1)--(0,0);
\draw[black,thick,shift={(1.74,1)}] (0,0)--(150:1)--(0,1)--(30:1)--(0,0);
\end{tikzpicture}
\hspace{1cm}
\begin{tikzpicture}[scale=0.7, baseline=(current bounding box.center)]
\draw[black,thick,shift={(-150:1)}] (0,3)--(0,0)--(-30:3);
\draw[black,thick,shift={(0,3)}] (-150:1)--(0,0)--(-30:1);
\draw[black,thick,shift={(30:2)},shift={(0,2)}] (-150:1)--(0,0)--(-30:1);
\draw[black,thick,shift={(30:4)},shift={(0,1)}] (-150:1)--(0,0)--(-30:1);
\draw[black,thick,shift={(-30:3)},shift={(30:2)}] (-150:3)--(0,0)--(0,3);
\draw[blue,thick,shift={(0,-1)},shift={(150:0.4)}] (0,0)--(0,2);
\draw[blue,thick,shift={(0,1.4)}] (-30:0.4)--(0,0)--(-150:0.4);
\draw[blue,thick,shift={(30:0.4)}] (0,1)--(0,0);
\draw[blue,thick,shift={(30:1)},shift={(0,-0.6)}] (150:0.6)--(0,0)--(30:0.6);
\draw[blue,thick,shift={(30:1.6)}] (0,-0.6)--(0,1.5);
\draw[blue,thick,shift={(30:2)},shift={(0,1.5)}] (-30:0.5)--(0,0)--(-150:0.4);
\draw[blue,thick,shift={(30:2.5)}] (0,1)--(0,-1);
\draw[blue,thick,shift={(30:3)},shift={(0,-1.5)}] (150:0.5)--(0,0)--(30:1);
\draw[blue,thick,shift={(30:2.5)},shift={(-30:1.9)}] (150:0.4)--(0,0)--(0,-1.1);
\draw[blue,thick,shift={(-30:2)},shift={(0,-0.5)}] (-30:0.5)--(0,0)--(-150:0.5);
\draw[Red,thick,shift={(0,-1)},shift={(150:0.5)}] (0,0)--(0,2);
\draw[Red,thick,shift={(0,1.5)}] (-30:0.5)--(0,0)--(-150:0.5);
\draw[Red,thick,shift={(30:0.5)}] (0,1)--(0,0);
\draw[Red,thick,shift={(30:1)},shift={(0,-0.5)}] (150:0.5)--(0,0)--(30:0.5);
\draw[Red,thick,shift={(30:1.5)}] (0,-0.5)--(0,0.5);
\draw[Red,thick,shift={(30:2)},shift={(0,0.5)}] (-30:1)--(0,0)--(-150:0.5);
\draw[Red,thick,shift={(30:3)},shift={(0,-0.5)}] (0,0)--(30:0.5);
\draw[Red,thick,shift={(30:3.5)},shift={(0,0.5)}] (0,-1)--(0,0)--(30:0.5);
\draw[Red,thick,shift={(30:4.5)}] (150:0.5)--(0,0)--(0,-3);
\draw[Red,thick,shift={(-30:2)},shift={(0,0.5)}] (-30:0.5)--(0,0)--(-150:0.5);
\draw[Red,thick,shift={(-30:1.5)}] (0,0)--(0,-1);
\draw[Red,thick,shift={(-30:2.5)}] (0,0.5)--(0,-0.5);
\end{tikzpicture}.
\]
The weight of the tiling is $q^8 \cdot q^{11} \cdot t^6$. The lozenges involved in the 6 coupled pairs are outlined in bold.
\end{example}

\begin{theorem}\label{thm:bij2color}
Let $(\Lambda,\Lambda')$ be a pair of RPPs of shape $\lambda$ and $\ell(\lambda)$ be the number of non-zero parts of $\lambda$. Let $\mathcal{C}$ be the corresponding path configuration under the bijection to the 2-color vertex model. Then if one chooses the weight parameters of the vertex model to be $x_i=q^{\pm i}$ where we take $+$ for a gray row and $-$ for a white row, we have
\[
q^{|\Lambda|+|\Lambda'|}t^{g(\Lambda,\Lambda')} = w(\mathcal{C})\cdot A_\lambda(q;t),\quad A_\lambda(q;t)= A_{\lambda}(q)^2 t^{\frac{-\ell(\ell-1)}{2}} 
\]
where $ w(\mathcal{C})$ is the weight of the vertex model configuration and $A_\lambda(q;t)$ is completely determined by the shape $\lambda$ and is independent of the specific configuration. 
\end{theorem}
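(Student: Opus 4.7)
The proof proceeds along the same lines as Theorem~\ref{thm:bij1color}, with the weight of a vertex-model configuration decomposing into a $q$-part coming from the $x_i$ variables and a $t$-part coming from the interaction parameter. My plan is to treat these two contributions separately, as the weights factor nicely. For white vertices we have $L^{(2)}_{x;t}((v_b,v_r)) = L_{xt^{\delta_1}}(v_b)\,L_x(v_r)$ and for gray vertices the analogous multiplicative decomposition holds via Equation~\eqref{eqn:gray_weights}. Setting $t=1$ leaves only the $q$-contribution, which factorizes completely as a product of the 1-color weights for the blue and red paths.

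For the $q$-contribution, I would apply Theorem~\ref{thm:bij1color} directly to each color. The bijection between 2-color vertex configurations and pairs of RPPs of shape $\lambda$ is, by construction, the product of the 1-color bijections restricted to the blue and red subconfigurations. Hence, with $x_i = q^{\pm i}$ chosen as stated, the $x$-dependent part $w_q(\mathcal{C})$ of the weight satisfies $w_q(\mathcal{C}) = q^{|\Lambda|+|\Lambda'|}/A_\lambda(q)^2$, which gives the $A_\lambda(q)^2$ factor in $A_\lambda(q;t)$.

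For the $t$-contribution, the trivial part $t^{\ell(\ell-1)/2}$ is already accounted for by the earlier lemma, counting the red paths that must exit the top of each successive gray row; this yields the compensating factor $t^{-\ell(\ell-1)/2}$ in $A_\lambda(q;t)$. What remains is to show that the non-trivial $t$-factors, which appear in Figure~\ref{fig:whitevertices2} (rows~3 and~4, where a blue path exits horizontally in the presence of a red path) and in Equation~\eqref{eqn:gray_weights} via the exponents $\alpha_i, \beta_i$, are counted exactly by the coupled-pair statistic $g(\Lambda, \Lambda')$ from Definition~\ref{def:coupledLozenges}. Extending the dictionaries \eqref{eq:whitedict} and \eqref{eq:graydict} to the 2-color setting, each local pair $(v_b, v_r)$ of vertex configurations translates to a specific pair of stacked lozenges, and I would verify that a non-trivial $t$ appears in the weight precisely when that stacked pair matches one of the four types in \eqref{eq:coupledpair}.

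The main obstacle will be the bookkeeping in this final step: matching the $t$-exponents in the weights with the four coupled-pair types demands a careful case analysis across all allowed vertex pairs. A clean way to reduce this is the corner-flip argument from Theorem~\ref{thm:bij1color}: start from the base configuration $\mathcal{C}_0$ in which every path rises straight up until forced right, check directly that $w(\mathcal{C}_0) = A_\lambda(q;t)^{-1}$ (both RPPs have volume zero and the coupled-pair count contributes no non-trivial $t$), and then show that every corner flip of a single color changes both sides by the same factor $q \cdot t^{\Delta g}$, where $\Delta g$ is the local change in coupled pairs. This localizes the verification to the four row-color-pair cases of Theorem~\ref{thm:bij1color}, each refined by whether a path of the opposite color passes through the flipped $2\times 2$ region, giving a finite and manageable list of checks.
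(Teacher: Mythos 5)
Your proposal matches the paper's proof: the paper likewise splits the weight into the $q$-part (handled color-by-color via Theorem \ref{thm:bij1color}), the trivial $t^{\ell(\ell-1)/2}$ factor from the gray rows, and a direct dictionary check showing that the remaining $t$-factors --- a blue path exiting right with red present in a white vertex, and a blue path not exiting right with red absent in a gray vertex --- correspond exactly to the four coupled-pair types of \eqref{eq:coupledpair} via \eqref{eq:whitedict} and \eqref{eq:graydict}. The corner-flip reduction you append at the end is a valid but unnecessary detour (the paper does not use it), since the dictionary check is already a purely local, finite verification rather than something requiring an inductive connectivity argument.
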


\begin{proof}
Note that the factors of $q$ for each color behave just as in the 1-color case and so we only need to check that the factors of $t$ agree.

The factors coming from the $t^{\beta_i}$ in the gray weights \eqref{eqn:gray_weights} correspond exactly to the trivial factors of $t$ in the vertex weights which are canceled by the $t^{-\ell(\ell-1)/2}$ factor in $A_\lambda(q;t)$. The factors of $t^{\alpha_i}$ in \eqref{eqn:gray_weights} can be described by saying that we get a factor of $t$ if a blue path does not exit a vertex to the right and a red path is not present in the vertex. The blue path not exiting the vertex to the right means it is either not present or it exits at the top. From the dictionary in \eqref{eq:graydict}, both blue and red not being present in the vertex correspond to the third type of coupled pair in \eqref{eq:coupledpair} while the blue path exiting at the top and the red path not being present corresponds to the last type of coupled pair in \eqref{eq:coupledpair}.

Recall, that for a white row of the vertex model there is a factor of $t$ if a blue path exits right and a red path is present. Note that if a red path is present it must either exit the vertex to the right or up. Using the dictionary in \eqref{eq:whitedict}, we see that a blue path exiting right and a red path exiting up in a vertex corresponds to the first type of coupled pair in \eqref{eq:coupledpair}, while a blue path exiting right and a red path exiting right in a vertex corresponds to the second type of coupled pair in \eqref{eq:coupledpair}.
\end{proof}

\subsection{The Generating Function}
Before we prove our main theorem we need the multicolored analogue of Lemma \ref{lem:whitegrayswap}.
\begin{lemma} \label{lem:whitegrayswap2}
Choose $x,y$ such that $|xy|,|xyt|<1$. Then the white and gray rows satisfy the commutation relation
\begin{equation}
\begin{aligned}
&\begin{tikzpicture}[baseline=(current bounding box.center)]
\draw[fill=lightgray] (0,0) rectangle (7,1);
\draw (0,0) grid (7,2);
\node at (6.5,0.5) {$\dots$};
\node at (6.5,1.5) {$\dots$};
\node at (0.5,0.5) {$x$};
\node at (1.5,0.5) {$x$};
\node at (2.5,0.5) {$x$};
\node at (3.5,0.5) {$x$};
\node at (4.5,0.5) {$x$};
\node at (5.5,0.5) {$x$};
\node at (0.5,1.5) {$y$};
\node at (1.5,1.5) {$y$};
\node at (2.5,1.5) {$y$};
\node at (3.5,1.5) {$y$};
\node at (4.5,1.5) {$y$};
\node at (5.5,1.5) {$y$};

\node[below] at (3.5,0) {$\vec\mu$};
\node[above] at (3.5,2) {$\vec\lambda$};

\draw[fill=white] (0,0.5) circle (3pt);
\draw[fill=white] (0,1.5) circle (3pt);
\draw[fill=black] (7,0.5) circle (3pt);
\draw[fill=white] (7,1.5) circle (3pt);
\end{tikzpicture}
\\
&\qquad \qquad = (1-xy)(1-xyt) 
\begin{tikzpicture}[baseline=(current bounding box.center)]
\draw[fill=lightgray] (0,1) rectangle (7,2);
\draw (0,0) grid (7,2);
\node at (6.5,0.5) {$\dots$};
\node at (6.5,1.5) {$\dots$};
\node at (0.5,0.5) {$y$};
\node at (1.5,0.5) {$y$};
\node at (2.5,0.5) {$y$};
\node at (3.5,0.5) {$y$};
\node at (4.5,0.5) {$y$};
\node at (5.5,0.5) {$y$};
\node at (0.5,1.5) {$x$};
\node at (1.5,1.5) {$x$};
\node at (2.5,1.5) {$x$};
\node at (3.5,1.5) {$x$};
\node at (4.5,1.5) {$x$};
\node at (5.5,1.5) {$x$};

\node[below] at (3.5,0) {$\vec\mu$};
\node[above] at (3.5,2) {$\vec\lambda$};

\draw[fill=white] (0,0.5) circle (3pt);
\draw[fill=white] (0,1.5) circle (3pt);
\draw[fill=white] (7,0.5) circle (3pt);
\draw[fill=black] (7,1.5) circle (3pt);
\end{tikzpicture}
\end{aligned}
\end{equation}
where $\vec\lambda$ and $\vec\mu$ are pairs of partitions corresponding to the top and bottom boundary conditions of the rows.
\end{lemma}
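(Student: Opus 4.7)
The plan is to mimic the proof of the single-color version, Lemma \ref{lem:whitegrayswap}, but using the multicolored Yang--Baxter equation. First, I would attach a cross vertex with an appropriate spectral parameter to the left boundary of the two-row LHS; since no paths of either color enter on the left, the cross must be empty for both colors and contributes total weight $1$, so the partition function is unchanged. Repeated application of the multicolored gray/white YBE (the Proposition cited from \cite{LLT}) then pushes the cross all the way to the right through both rows, swapping their order: the gray row ends up on top, the white row on the bottom, and the cross now sits on the right of the two-row strip. Each YBE step preserves the partition function.

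Next, I would analyze the cross configurations compatible with the right boundary, which inherits from the LHS: no paths at the upper right, one path of each color at the lower right. For each color independently, the cross therefore takes either the straight $\backslash$ configuration (the path enters from the gray row above) or the V-through-center configuration (the path enters from the white row below). In the V case, that color must travel horizontally through all but finitely many of the infinitely many vertices of the white row. Direct inspection of Figures \ref{fig:whitevertices2} and \ref{fig:grayvertices2} shows that each such tail column contributes $xy$ when only one color escapes via the white row (with the other color remaining in the gray row) and $(xyt)^2$ when both colors escape via the white row; the assumption $|xy|,\,|xyt|<1$ then forces each such configuration to contribute formally zero. So the only surviving cross configuration has both blue and red in the $\backslash$ shape.

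Finally, I would compute the weight of this surviving cross configuration from the multicolored cross weight formula \eqref{eq:crossweight2}. With both colors in the $\backslash$ shape, the indicator $r=1$ is triggered by the red path, and the blue and red factors combine to give exactly $(1-xy)(1-xyt)$. Stripping the cross off the right reproduces the two-row configuration on the RHS of the lemma multiplied by this overall factor, proving the identity. The main obstacle is the tail analysis across the four possible escape patterns: one must verify case by case that each tail column yields a factor whose absolute value is strictly less than one under the stated hypotheses, and in particular confirm that the new hypothesis $|xyt|<1$ (absent in the one-color analog) is exactly what is needed to kill the configuration in which both colors escape through the white row. Once this bookkeeping is done, the rest of the proof is a direct analog of the single-color argument.
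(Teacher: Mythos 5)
Your proposal is correct and is exactly the argument the paper intends (the paper's own proof is a one-line appeal to the one-color case together with the references); your tail analysis --- a factor of $xy$ per column when exactly one color escapes through the white row and $(xyt)^2$ per column when both do, so that $|xy|<1$ and $|xyt|<1$ kill all but the double-$\backslash$ cross configuration --- matches what Figures \ref{fig:whitevertices2} and \ref{fig:grayvertices2} give. One caveat worth recording: to obtain the surviving cross weight $R_{z/t}(v_b)R_z(v_r)=(1-xy)(1-xyt)$ from \eqref{eq:crossweight2} you must take the spectral parameter $z=xyt$ (as forced by the change of variables $\bar x=1/(xt)$ relating gray to white vertices), not $z=xy$ as literally printed in the paper's statement of the colored gray--white Yang--Baxter equation, so make that substitution explicit when you write the computation out.
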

\begin{proof}
The proof follows analogously to the 1-color case. This result can also be found in \cite{aggarwal2023colored,LLT}.
\end{proof}

\begin{theorem}\label{mainthm}
For any fixed $\lambda$, the generating function for volume weighted pairs of interacting RPPs is given by
\begin{equation} \label{eq:genfun2}
\sum_{\Lambda, \Lambda^\prime \in RPP(\lambda)} q^{|\Lambda| + |\Lambda'|}t^{g(\Lambda,\Lambda^\prime)} = \prod_{c \in \lambda} \frac{1}{(1-q^{h_{\lambda}(c)})(1-q^{h_{\lambda}(c)}t)}.
\end{equation}
\end{theorem}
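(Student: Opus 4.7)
The plan is to mirror the proof of Theorem \ref{thm:genfunRPP}, replacing the single-color commutation lemma with its 2-color analog Lemma \ref{lem:whitegrayswap2}. Starting from the vertex model configuration associated to the shape $\lambda$ with parameters $x_i = q^{\pm i}$ chosen as in Theorem \ref{thm:bij2color}, I would apply Lemma \ref{lem:whitegrayswap2} repeatedly to push every gray row past every white row that lies below it, arriving at a configuration in which all $\ell = \ell(\lambda)$ gray rows sit at the bottom and the remaining white rows above.

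Each swap involves the gray row associated to $\lambda_i$, which (by the same bookkeeping as in Theorem \ref{thm:genfunRPP}) carries parameter $q^{\lambda_i + \ell - i + 1}$, and the white row associated to $\lambda_j'$, which carries parameter $q^{\lambda_j' - \ell - j}$. As in the 1-color case, the pair $(i,j)$ actually appears in a swap iff the cell $c$ at position $(i,j)$ lies in the Young diagram of $\lambda$, and the hook-length identity
\[
(\lambda_i + \ell - i + 1) + (\lambda_j' - \ell - j) = \lambda_i - i + \lambda_j' - j + 1 = h_\lambda(c)
\]
shows that each swap contributes a factor $(1-q^{h_\lambda(c)})(1-q^{h_\lambda(c)} t)$ by Lemma \ref{lem:whitegrayswap2}. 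Multiplying over all such cells yields a global factor of $\prod_{c \in \lambda}(1-q^{h_\lambda(c)})(1-q^{h_\lambda(c)}t)$.

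After the rearrangement, the resulting vertex configuration (gray rows packed at the bottom, white rows above) admits a unique valid path filling for both colors; this corresponds to the pair of zero RPPs, having total volume zero and $g(\emptyset,\emptyset)=0$. Hence, by Theorem \ref{thm:bij2color}, this single surviving configuration has weight $1/A_\lambda(q;t)$. Equating the two expressions for the partition function of the original configuration gives
\[
\frac{1}{A_\lambda(q;t)} \sum_{\Lambda, \Lambda'} q^{|\Lambda|+|\Lambda'|} t^{g(\Lambda,\Lambda')} = \frac{1}{A_\lambda(q;t)} \prod_{c \in \lambda} \frac{1}{(1 - q^{h_\lambda(c)})(1 - q^{h_\lambda(c)} t)},
\]
and canceling $A_\lambda(q;t)$ yields the desired product formula. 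The main obstacle has already been isolated into Lemma \ref{lem:whitegrayswap2}, whose verification (using the colored Yang-Baxter equation to push a cross through the two rows and observing that a non-trivially colored cross contributes zero in the $|xy|, |xyt|<1$ regime) is the only substantive new ingredient beyond the 1-color argument; once that commutation relation is in hand, the hook-length bookkeeping and the $A_\lambda(q;t)$ normalization are exact parallels of the proof of Theorem \ref{thm:genfunRPP}.
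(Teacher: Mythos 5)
Your proposal is correct and follows exactly the route the paper takes: the paper's proof of Theorem \ref{mainthm} simply invokes the 1-color argument verbatim with Lemma \ref{lem:whitegrayswap} replaced by Lemma \ref{lem:whitegrayswap2}, so that each row swap contributes the double factor $(1-q^{h_\lambda(c)})(1-q^{h_\lambda(c)}t)$ in place of $(1-q^{h_\lambda(c)})$. Your version just spells out the hook-length bookkeeping and the $A_\lambda(q;t)$ normalization that the paper leaves implicit.
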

\begin{proof}
The proof follows exactly as in the 1-color case where the two factors in the denominators of \eqref{eq:genfun2} come from the two factor in the commutation relation, Lemma \ref{lem:whitegrayswap2}.
\end{proof}

\section{The $t \rightarrow 0$ Limit}\label{sec:tto0}
We can now more easily prove the following theorem.
\begin{theorem}\label{thm:t0}
     For any integer $n \ge 0$, the number of pairs of RPPs $\Lambda, \Lambda' \in RPP(\lambda)$ satisfying $g(\Lambda,\Lambda') = 0$ and having total volume $|\Lambda| + |\Lambda'|=n$ is equal to the number of RPPs of shape $\lambda$ and volume $n$.
\end{theorem}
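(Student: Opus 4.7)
The plan is to deduce the theorem immediately from Theorem \ref{mainthm} by specializing the parameter $t$. Setting $t = 0$ in the generating function identity
\[
\sum_{\Lambda, \Lambda^\prime \in RPP(\lambda)} q^{|\Lambda| + |\Lambda'|}t^{g(\Lambda,\Lambda^\prime)} = \prod_{c \in \lambda} \frac{1}{(1-q^{h_{\lambda}(c)})(1-q^{h_{\lambda}(c)}t)},
\]
the only terms on the left that survive are those with $g(\Lambda,\Lambda') = 0$, while on the right each factor $1/(1-q^{h_\lambda(c)} t)$ collapses to $1$. This yields
\[
\sum_{\substack{\Lambda, \Lambda^\prime \in RPP(\lambda) \\ g(\Lambda,\Lambda^\prime) = 0}} q^{|\Lambda| + |\Lambda'|} = \prod_{c \in \lambda} \frac{1}{1-q^{h_{\lambda}(c)}}.
\]

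Next I invoke Theorem \ref{thm:genfunRPP}, which gives exactly the same product on the right as the generating function $\sum_{\Lambda \in \operatorname{RPP}(\lambda)} q^{|\Lambda|}$. Thus the two generating functions
\[
\sum_{\substack{\Lambda, \Lambda^\prime \in RPP(\lambda) \\ g(\Lambda,\Lambda^\prime) = 0}} q^{|\Lambda| + |\Lambda'|} \quad \text{and} \quad \sum_{\Lambda \in \operatorname{RPP}(\lambda)} q^{|\Lambda|}
\]
are identical as formal power series in $q$.

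The conclusion then follows by comparing the coefficient of $q^n$ on each side: the left-hand coefficient counts pairs $(\Lambda, \Lambda') \in RPP(\lambda) \times RPP(\lambda)$ with $g(\Lambda,\Lambda') = 0$ and $|\Lambda| + |\Lambda'| = n$, while the right-hand coefficient counts elements of $RPP(\lambda)$ of volume $n$. Since these coefficients agree for every $n$, the counts are equal.

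There is no real obstacle here, since the theorem is a direct corollary of the main product formula together with Gansner's hook-length formula for $RPP$. The genuinely substantive content is deferred to Proposition \ref{prop:sliding}, which promotes this numerical identity to an explicit bijection; that is where one would expect the nontrivial work (understanding how the coupling constraint $g(\Lambda,\Lambda') = 0$ forces the two tilings to be ``sliding'' compatible in a way that reassembles to a single $RPP$) to take place.
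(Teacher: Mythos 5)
Your proof is correct and is essentially identical to the paper's own argument: both specialize $t\to 0$ in Theorem \ref{mainthm}, identify the resulting product with the single-RPP generating function via Theorem \ref{thm:genfunRPP}, and compare coefficients of $q^n$. No issues.
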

\begin{proof}
    We start with the result from Theorem \ref{mainthm}:
    \[
    \sum_{\Lambda, \Lambda' \in RPP(\lambda)} q^{|\Lambda| + |\Lambda'|}t^{g(\Lambda,\Lambda')} = \prod_{c \in \lambda} \frac{1}{(1-q^{h_{\lambda}(c)})(1-q^{h_{\lambda}(c)}t)}
    \]
    Now we take the limit as $t$ approaches 0 on both sides. On the LHS, all terms containing a positive power of $t$ vanish. On the RHS, the second factor in the denominator approaches 1. Thus, we obtain
    \[
    \sum_{\Lambda, \Lambda' \in RPP(\lambda), g(\Lambda, \Lambda') = 0} q^{|\Lambda| + |\Lambda'|} = \prod_{c \in \lambda} \frac{1}{1-q^{h_{\lambda}(c)}}
    \]
    The LHS is now the generating function for pairs of reverse plane partitions of shape $\lambda$ satisfying $g(\Lambda,\Lambda') = 0$. The RHS is now the generating function for reverse plane partitions of shape $\lambda$. Matching coefficients of $q^n$ gives the theorem.
\end{proof}
The rest of this section is devoted to giving a combinatorial proof of the above theorem.

\subsection{Bijective Proof}
The bijection we construct will be based on sliding the paths coming from the border strip decomposition of the RPP. A similar bijection for coupled domino tilings of the Aztec diamond was given in \cite{CGK}. To begin we prove a lemma describing the restriction having $t=0$ places on the paths. It will be useful for us to order the paths on the tilings. For each color, we will say the upper most past is first, followed by the second highest path, and so on. Equivalently, we order the border strip in the border strip decomposition from outermost to innermost. Furthermore, we will say that the paths start on the left and end on the right.

\begin{lemma} \label{lem:t0contraints1}
When the interaction strength $t=0$:
\begin{itemize}
\item The $i^{th}$ blue path must be weakly below the $i^{th}$ red path.
\item The $i^{th}$ blue path must be strictly above the $(i+1)^{th}$ red path. 
\end{itemize}
\end{lemma}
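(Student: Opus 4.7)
The plan is to show, for each of the two bullets, that any violation of the stated inequality produces at least one coupled pair in the sense of Definition~\ref{def:coupledLozenges}, contradicting the requirement $g(\Lambda,\Lambda')=0$. That $g=0$ is a necessary condition here follows from Theorem~\ref{thm:bij2color} together with the observation that at $t=0$ the weight of a configuration vanishes whenever $g(\Lambda,\Lambda')>0$. The central geometric input is that, because both $\Lambda$ and $\Lambda'$ share the shape $\lambda$, the $i$-th blue path $B_i$ and the $i$-th red path $R_i$ trace the same $i$-th border strip of $\lambda$; in particular they start at a common cell and end at a common cell on the boundary, and differ only in their heights above the stacks of cubes, as read off from the respective RPP entries.

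For the first bullet, I would assume that at some column $B_i$ lies strictly above $R_i$. Since $B_i$ and $R_i$ share boundary endpoints, they must meet at some intermediate column $c$. At $c$, the vertical slice corresponds to either a hole or a particle in the Maya diagram of $\lambda$, and the lozenge-decoration rules from the discussion preceding Example~\ref{ex:RPPpaths} force $B_i$ and $R_i$ to share a common vertical edge of either two purple lozenges (hole-slice) or two brown lozenges (particle-slice). Superimposing the two tilings, this shared edge is exactly the defining pattern of a Type~2 or Type~3 coupled pair in~\eqref{eq:coupledpair}, hence $g(\Lambda,\Lambda')\ge1$.

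For the second bullet, I would apply the analogous argument to the pair $B_i$, $R_{i+1}$. These trace adjacent border strips of $\lambda$, so their boundary endpoints are neighbors, and in particular $B_i$ starts out strictly above $R_{i+1}$. If $B_i$ were at or below $R_{i+1}$ at some column, the two paths must meet somewhere in between, and at the meeting point I again match the local configuration to one of the patterns in~\eqref{eq:coupledpair}: within a single vertical slice the pair is of Type~2 or~3, while if the encounter occurs across a green lozenge (a horizontal transition between columns) it is of Type~1 or~4.

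The main obstacle is a finite local case analysis in this last step: for each possible way two differently colored border-strip paths can come into contact (inside a hole-slice, inside a particle-slice, or during a horizontal transition through a green lozenge), one must verify that the superposition of the surrounding lozenges matches exactly one of the four patterns in~\eqref{eq:coupledpair}. This is bookkeeping that closely parallels the weight computation already carried out in the proof of Theorem~\ref{thm:bij2color}, but it is the only substantive content of the lemma.
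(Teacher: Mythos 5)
Your overall strategy is the same as the paper's (a violation of either inequality forces a coupled pair, contradicting $g(\Lambda,\Lambda')=0$), but the case analysis at the meeting points has two genuine gaps. For the first bullet, the claim that a meeting of $B_i$ and $R_i$ ``forces'' a shared vertical edge (hence a Type~2 or~3 pair) is false: two paths can coincide at a single lattice point and then separate, with one taking a vertical step inside the slice while the other transitions to the next slice through a green lozenge. No vertical edge is shared in that event, and yet this is precisely the only way $B_i$ can get strictly above $R_i$ after agreeing with it; what it produces is a coupled pair of Type~1 (hole slice) or Type~4 (particle slice), not Type~2 or~3. So your bullet-one argument omits exactly the separation mode that must be excluded. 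The paper's proof handles this by noting that from a common point the next steps can neither both be vertical (Types~2,~3) nor put blue above red (Types~1,~4).

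For the second bullet the converse problem appears: you assert that the meeting point itself ``matches one of the patterns in~\eqref{eq:coupledpair}.'' It need not. Two paths can meet and separate with \emph{red} going above blue (red vertical, blue transitioning out below), or both can transition together through superimposed green lozenges; neither configuration is among the four coupled types, which is exactly why $B_i$ and $R_i$ are allowed to touch in the first bullet. If mere contact forced a coupled pair, your argument would also forbid $B_i$ from ever meeting $R_i$, which is false. The missing ingredient is the endpoint condition the paper uses: since $B_i$ ends strictly above $R_{i+1}$, after any meeting there must be a later step at which blue separates \emph{upward} from red, and it is that step --- not the meeting itself --- that creates the Type~1 or~4 coupled pair. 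Both gaps are repairable with the paper's ``analyze the step after a coincidence'' formulation, but as written the local analysis does not close.
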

\begin{proof}
Consider the $i^{th}$ path of each color. Suppose after some step the paths are at the some point. The next step of both paths cannot be vertical otherwise there is a coupled pair of type two or three from \eqref{eq:coupledpair}. If the next step put the blue path above the red path it would introduce a coupled pair of either type one or four. We see that  $i^{th}$ blue path must be weakly below the $i^{th}$ red path.

Now consider the $i^{th}$ blue path and $(i+1)^{th}$ red path. Note that the blue path starts and ends strictly above the red path. Suppose after some step the paths are a the same point. As the blue paths ends above the red path, there is some step where the paths separate with the blue path becoming above the red path. This would introduce a coupled pair  of either type one or four.  Thus the $i^{th}$ blue path must be strictly above the $(i+1)^{th}$ red path.
\end{proof}
Note that it is also true that if paths satisfy the constraints in the above lemma then there are no coupled pairs of lozenges.

Using the above lemma, we can show that setting the interaction strength to zero force main of the entries in the RPPs to be zero.
\begin{lemma}\label{lem:t0contraints2}
When the interaction strength $t=0$, all entries in the first $i$ columns and first $i$ rows are zero of the $i^{th}$ blue border strip and $(i+1)^{th}$ red border strip are zero.
\end{lemma}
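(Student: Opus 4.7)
The plan is to use Lemma \ref{lem:t0contraints1} to translate the path inequalities into vanishing conditions on the RPP entries. The key dictionary is a monotone correspondence between entries on a border strip and the position of the corresponding path in the lozenge tiling: increasing an entry at a cell $c$ on the $j$-th border strip by one shifts the $j$-th path outward by one lozenge at the step corresponding to $c$. I would first establish this correspondence carefully by tracing through the bijection of Theorem \ref{thm:bij2color} together with the dictionaries \eqref{eq:whitedict} and \eqref{eq:graydict}. In the baseline configuration (all entries zero), the $i$-th path of either color hugs the inner boundary of the $i$-th border strip, sitting exactly one lozenge outward from the $(i+1)$-th path.

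With this dictionary in hand, I would fix $i$ and restrict attention to the region $R$ consisting of the first $i$ rows and first $i$ columns of $\lambda$. In $R$, the $i$-th and $(i+1)$-th border strips are adjacent, with their baseline paths separated by a single lozenge. Suppose, for contradiction, that some entry of the $(i+1)$-th red border strip in $R$ were positive. Then the $(i+1)$-th red path shifts outward at the corresponding step, closing the gap to the baseline of the $i$-th blue path. By Lemma \ref{lem:t0contraints1}(b), the $i$-th blue must stay strictly above, so it too must shift outward, requiring a positive entry on the $i$-th blue border strip in $R$.

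At this point I would iterate Lemma \ref{lem:t0contraints1}. Condition (a) forces the $i$-th red path to shift outward in lockstep with the blue one, which by condition (b) for $i-1$ demands an outward shift of the $(i-1)$-th blue path, and so on. The iteration propagates shifts through all lower-indexed border strips, culminating in the outermost red and blue paths, which cannot shift outward because their baselines already lie on the boundary of $\lambda$. This contradiction rules out positive entries on either the $(i+1)$-th red or the $i$-th blue border strip in $R$, proving the lemma.

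The principal obstacle will be making the ``one-unit-entry, one-lozenge-shift'' correspondence precise enough to sustain the propagation across multiple border strips, particularly when the shape $\lambda$ is not rectangular and the border strips are not full hooks. Care is also needed at the boundary between the horizontal and vertical arms of each border strip, where the mapping between entries and path steps changes direction; I expect these to be routine checks using the explicit vertex weights of Section \ref{sec:multicolored} rather than delicate cancellations.
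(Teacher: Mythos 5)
Your cascade has the right first step but the wrong termination, and the contradiction you invoke at the end does not exist. Let me grant the dictionary you set up: a positive entry at a cell $c_0$ of the $(i+1)^{th}$ red border strip elevates the red path there, and by Lemma \ref{lem:t0contraints1} this forces a positive entry on the $i^{th}$ blue strip at the cell $c_0+(1,1)$ on the same diagonal (note already here that this cell lies in the first $i+1$ rows/columns, not in your region $R$, so the invariant you claim is not preserved). Iterating as you describe, the chain terminates at the cell $c_0+(i,i)$ of the \emph{first} border strip, which sits in row or column as large as $2i$ --- an interior cell of the rim, not an endpoint. Your claimed contradiction is that ``the outermost red and blue paths cannot shift outward because their baselines already lie on the boundary of $\lambda$,'' but this is false: the outermost paths live on the lozenge tiling and are free to be elevated wherever the rim entries are positive; nothing pins them except at their two endpoints on the back walls. (Indeed, for $\lambda=(2,2)$ a positive entry at the corner cell $(2,2)$ of the rim is perfectly compatible with $g=0$, and your cascade starting from the cell $(1,1)$ lands exactly there.) So the derivation of a contradiction is missing.

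The paper closes this gap with two ingredients your plan omits. First, an \emph{endpoint} analysis: the first and last steps of the outermost blue path cannot be vertical (either would create a coupled pair or violate Lemma \ref{lem:t0contraints1}), which pins down only the two extreme entries of the first row and first column. Second, the \emph{monotonicity of the RPP along rows and columns} is what propagates these two zeros to the entire first row and column; no cross-strip cascade is needed for this. The statement for the $(i+1)^{th}$ red strip is then obtained from the already-established zeros of the $i^{th}$ blue strip (the red path cannot step up without meeting the now-pinned blue path), and the general case follows by induction, peeling off one row and one column at a time and repeating the endpoint argument on the truncated pair of RPPs. If you want to salvage your approach, you would need to run your propagation \emph{along} each border strip toward its endpoints (using RPP monotonicity) rather than across border strips, which essentially reproduces the paper's argument.
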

\begin{proof}
Consider the first path of each color. The leftmost step of the blue path cannot be vertical as there would be a coupled pair of either the first or second type in \eqref{eq:coupledpair}. The rightmost step of the blue path also cannot be vertical. To see this note that if both blue and red were vertical then we would have a coupled pair of the third type. If the blue was vertical and red was not, then this would violate the constraint from Lemma \ref{lem:t0contraints1} that  blue path is weakly below the red path.

In the RPP, these constraints means that the last entry in the first column and the last entry in the first row are both zero. Note that in order to obey the constraints of the RPP, this forces every entry in the first column and first row of the blue RPP to be zero. Thus the lemma holds for the first blue path.

Now consider the second red path. By Lemma \ref{lem:t0contraints1} and the fact the all entries of the first blue border strip in the first column are zero, we see that the red path cannot take a vertical step upward as it would meet the first blue path. Similarly, the last step could not be vertical as it would mean the red path was previously at the same point as the as the first blue path. We see that the first and last entry along the second red border strip are zero. The constraints defining the RPP imply the entries of every subsequent red border strip in the first row or column are also zero.

Now consider the $j^{th}$ blue border strip and $(j+1)^{th}$ red border strip and suppose the result holds for all previous border strips. Note that his implies that the entries in the first $j-1$ rows and columns are zero. Consider the pair of RPP we get by removing the first $j-1$ border strips and the first $j-1$ rows and columns. It is enough to show that that the result holds for the first row and column of the new RPPs. But this follows from exactly the same argument as above. 

By induction we get the lemma.
\end{proof}

\begin{proposition}\label{prop:sliding}
For each $i$, shift the $i^{th}$ blue path and the $(i-1)^{th}$ red path vertically down $i$ steps and remove the segments forced to be at height zero. The resulting configuration maps gives a valid RPP of the same shape.
\end{proposition}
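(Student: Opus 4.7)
The plan is to verify three things: that the sliding is well-defined (the shifted paths remain inside the tiling of $\lambda$), that the shifted paths form a valid non-intersecting configuration, and that the resulting configuration really is the path decomposition of a single RPP of shape $\lambda$.

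First I would use Lemma~\ref{lem:t0contraints2} to make the shifts explicit. Since the first $i$ rows and columns of the $i$-th blue border strip are forced to be zero, the initial and final stretches of the corresponding path sit flat at height $0$ along the boundary of $\lambda$, and an analogous statement holds for the paired red path. Thus ``shift the $i$-th blue path down by $i$ steps and delete the height-$0$ segments'' makes sense: each path has enough flat structure at each end to absorb its downward shift, and the new endpoints land on the boundary of $\lambda$ at precisely the positions that a path of the border strip decomposition of $\lambda$ is required to begin and end. This simultaneously shows the shifts keep every path inside the tiling and removes the correct amount of length.

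The main step is to verify that after all shifts are performed simultaneously, the resulting paths assemble into a valid path configuration on the tiling of $\lambda$ corresponding to some $\Lambda'' \in \operatorname{RPP}(\lambda)$. Before sliding, Lemma~\ref{lem:t0contraints1} arranges the paths, from top to bottom, in the alternating order $\text{red}_1, \text{blue}_1, \text{red}_2, \text{blue}_2, \ldots$, with the stated strict/weak inequalities. After the differential shifts, I would check case-by-case that the strict inequality between a blue path and the next red path translates into strict separation of the corresponding shifted paths, while the weak inequality between a red path and the blue path beneath it translates into the controlled coincidence (or separation) permitted between consecutive paths in the standard decomposition of $\lambda$. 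Reading the resulting pattern of lozenges back through the dictionaries \eqref{eq:whitedict} and \eqref{eq:graydict} then yields a single reverse plane partition $\Lambda''$ of shape $\lambda$. I expect this bookkeeping---tracking which ends of which paths are flat, which pairs of shifted paths touch versus separate, and verifying compatibility of the resulting lozenge orientations---to be the main obstacle.

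Finally I would close the circle by describing the inverse and confirming volume preservation, which together upgrade the construction to the bijection of Theorem~\ref{thm:t0}. Given any $\Lambda'' \in \operatorname{RPP}(\lambda)$, one alternately labels the paths of its decomposition as blue and red, shifts each back upward by the prescribed amount, and reinserts flat zero segments at the ends. A short check shows the lifted pair $(\Lambda, \Lambda')$ satisfies the constraints of Lemmas~\ref{lem:t0contraints1} and~\ref{lem:t0contraints2}, so $g(\Lambda, \Lambda') = 0$. Volume preservation $|\Lambda| + |\Lambda'| = |\Lambda''|$ follows because each unit vertical shift of a path conserves the total lozenge count in the tiling, so no cubes are created or destroyed by the sliding operation.
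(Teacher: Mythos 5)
Your outline follows the paper's proof: you invoke Lemma \ref{lem:t0contraints2} to show the deleted segments carry only forced zeros, you observe that shifting a border strip of $\lambda$ down-and-left and truncating turns the $i^{\text{th}}$ strip into the $(i+1)^{\text{th}}$ so that the interleaved red/blue strips tile $\lambda$ with the correct shape, and you then assert that the ordering constraints of Lemma \ref{lem:t0contraints1} guarantee the result is a genuine RPP. The first two steps are correct and match the paper. The problem is the third step, which you explicitly defer (``I expect this bookkeeping \ldots to be the main obstacle''): that bookkeeping \emph{is} the proof. It is not enough to say that the strict/weak separations of Lemma \ref{lem:t0contraints1} ``translate into'' the separations required of a single RPP, because the blue and red strips are shifted by \emph{different} amounts (the $i^{\text{th}}$ red strip moves $i-1$ steps while the $i^{\text{th}}$ blue strip moves $i$ steps), so the relative positions of formerly adjacent paths change, and cells that were never adjacent before the slide become adjacent afterwards. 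One must verify, for every pair of newly adjacent cells $x\le y$ (horizontally) and $x\le z$ (vertically) lying on \emph{different} border strips of the merged filling, that a violation would force either a coupled pair of lozenges (contradicting $g(\Lambda,\Lambda')=0$) or a crossing forbidden by Lemma \ref{lem:t0contraints1}. The paper does exactly this via a six-case analysis (blue-over-red and red-over-blue, in both the row and column directions, plus the trivial same-strip case), each case resolved by exhibiting the offending local lozenge configuration. Without carrying out these cases your argument does not establish that the merged filling is weakly increasing along rows and columns, which is the entire content of the proposition.

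A smaller point: your closing discussion of the inverse map and volume preservation is not needed for the proposition itself (the paper treats reversibility in a remark afterwards), and the volume claim, while true, follows more directly from the observation that the slide merely relocates nonzero entries and discards only entries already forced to be zero, rather than from a statement about conserving lozenge counts under vertical shifts.
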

\begin{proof}
In terms of filling in the RPP, shifting a paths down one step corresponds to shifting a border strip diagonally down and left one step. By Lemma \ref{lem:t0contraints1}, we have that all the entries of the RPP that are shifted off of the Young diagram were in fact forced to be zero due to the interaction strength being set to zero.

Note that if we take the $i^{th}$ border strip of $\lambda$, shift it down and left one step, and remove the cells that leave the Young diagram, the result is that it transforms into the $(i+1)^{th}$ border strip. Thus the tableaux we get by filling the $(2i-1)^{th}$ border strip with the shifted $i^{th}$ red border strip and the $2i^{th}$ border strip with the shifted $i^{th}$ blue border strip is at least the correct shape. 

We are left to check that the entries satisfy the inequalities defining a RPP, namely, that the are weakly increasing moving right along rows and up along columns. To do this, we will show that any triple of entries of the form
\[
\begin{tikzpicture}
\draw (0,0) grid (2,1);
\draw (0,1) grid (1,2);
\node at (0.5,0.5) {$x$};
\node at (1.5,0.5) {$y$};
\node at (0.5,1.5) {$z$};
\end{tikzpicture}
\]
satisfies $x\le y$ and $x\le z$. First, we will show  $x\le y$. There are three cases we must consider.

\noindent \textbf{Case 1: $x$ and $y$ are part of the same border strip.} In this case, since the border strip was part of one of the colored RPP, we get the inequality $x\le y$ automatically. 

\noindent \textbf{Case 2: $x$ is part of the $i^{th}$ blue border strip and $y$ is part of the $i^{th}$ red border strip.} In this case, we must have a cell above $y$ that is also part of the $i^{th}$ red border strip. Call this entry $w$. By Lemma \ref{lem:t0contraints1}, we must have $x\le w$. Suppose $x>y$. Then the resulting lozenges, before shifting, superimposed on top of one another would take the form
\[
\begin{tikzpicture}[fill opacity=0.5, baseline=(current bounding box.center)]
\fill[RawSienna,draw=black,shift={(0.87,-0.5)}] (0,0)--(0,1)--(150:1)--(-150:1)--(0,0);
\fill[RawSienna,draw=red,shift={(0.87,0.5)}] (0,0)--(0,1)--(150:1)--(-150:1)--(0,0);
\fill[RawSienna,draw=red,shift={(0.87,1.5)}] (0,0)--(0,1)--(150:1)--(-150:1)--(0,0);
\fill[YellowGreen,draw=blue] (0,0)--(150:1)--(0,1)--(30:1)--(0,0); \node[fill opacity=1.0] at (0,0.5) {$x$};
\fill[YellowGreen,draw=red,shift={(0,2)}] (0,0)--(150:1)--(0,1)--(30:1)--(0,0); \node[fill opacity=1.0] at (0,2.5) {$w$};
 \fill[YellowGreen,draw=red,shift={(0.87,-1.5)}] (0,0)--(150:1)--(0,1)--(30:1)--(0,0); \node[fill opacity=1.0] at (0.87,-1) {$y$};
\draw[blue,thick,shift={(0.1,0.07)}] (0.2175,0.375)--(0.435,0.25)--(0.435,-0.75);
\draw[red,thick] (0.2175,2.375)--(0.435,2.25)--(0.435,-0.75)--(0.6525,-0.875);
\end{tikzpicture}
\qquad \text{or} \qquad
\begin{tikzpicture}[fill opacity=0.5, baseline=(current bounding box.center)]
\fill[RawSienna,draw=red,shift={(0.87,-0.5)}] (0,0)--(0,1)--(150:1)--(-150:1)--(0,0);
\fill[RawSienna,draw=red,shift={(0.87,0.5)}] (0,0)--(0,1)--(150:1)--(-150:1)--(0,0);
\fill[RawSienna,draw=red,shift={(0.87,1.5)}] (0,0)--(0,1)--(150:1)--(-150:1)--(0,0);
\fill[YellowGreen,draw=blue] (0,0)--(150:1)--(0,1)--(30:1)--(0,0); \node[fill opacity=1.0] at (0,0.5) {$x$};
\fill[YellowGreen,draw=blue,shift={(0.87,-0.5)}] (0,0)--(150:1)--(0,1)--(30:1)--(0,0); 
\fill[YellowGreen,draw=red,shift={(0,2)}] (0,0)--(150:1)--(0,1)--(30:1)--(0,0); \node[fill opacity=1.0] at (0,2.5) {$w$};
 \fill[YellowGreen,draw=red,shift={(0.87,-1.5)}] (0,0)--(150:1)--(0,1)--(30:1)--(0,0); \node[fill opacity=1.0] at (0.87,-1) {$y$};
\draw[blue,thick] (0.2175,0.375)--(0.435,0.25)--(0.6525,0.125);
\draw[red,thick] (0.2175,2.375)--(0.435,2.25)--(0.435,-0.75)--(0.6525,-0.875);
\end{tikzpicture}
\]
Note that there is a coupled pair of type 3 in the first configuration and a coupled pair of type 4 int the second configuration. This is a contradiction, so we must have $x\le y$.

\noindent \textbf{Case 3: $x$ is part of the $(i+1)^{th}$ red border strip and $y$ is part of the $i^{th}$ blue border strip.} Just as in the previous case, we must have a the cell above $y$ that is also part of the $i^{th}$ blue border strip with entry $w$. By Lemma \ref{lem:t0contraints1}, we know $x\le w$. Again, suppose $x>y$. Looking at the lozenge configurations, we have
\[
\begin{tikzpicture}[fill opacity=0.5, baseline=(current bounding box.center)]
\fill[RawSienna,draw=black,shift={(0.87,-0.5)}] (0,0)--(0,1)--(150:1)--(-150:1)--(0,0);
\fill[RawSienna,draw=blue,shift={(0.87,0.5)}] (0,0)--(0,1)--(150:1)--(-150:1)--(0,0);
\fill[RawSienna,draw=blue,shift={(0.87,1.5)}] (0,0)--(0,1)--(150:1)--(-150:1)--(0,0);
\fill[YellowGreen,draw=red] (0,0)--(150:1)--(0,1)--(30:1)--(0,0); \node[fill opacity=1.0] at (0,0.5) {$x$};
\fill[YellowGreen,draw=blue,shift={(0,2)}] (0,0)--(150:1)--(0,1)--(30:1)--(0,0); \node[fill opacity=1.0] at (0,2.5) {$w$};
 \fill[YellowGreen,draw=blue,shift={(0.87,-1.5)}] (0,0)--(150:1)--(0,1)--(30:1)--(0,0); \node[fill opacity=1.0] at (0.87,-1) {$y$};
\draw[red,thick,shift={(0.1,0.07)}] (0.2175,0.375)--(0.435,0.25)--(0.435,-0.75);
\draw[blue,thick] (0.2175,2.375)--(0.435,2.25)--(0.435,-0.75)--(0.6525,-0.875);
\end{tikzpicture}
\qquad \text{or} \qquad
\begin{tikzpicture}[fill opacity=0.5, baseline=(current bounding box.center)]
\fill[RawSienna,draw=blue,shift={(0.87,-0.5)}] (0,0)--(0,1)--(150:1)--(-150:1)--(0,0);
\fill[RawSienna,draw=blue,shift={(0.87,0.5)}] (0,0)--(0,1)--(150:1)--(-150:1)--(0,0);
\fill[RawSienna,draw=blue,shift={(0.87,1.5)}] (0,0)--(0,1)--(150:1)--(-150:1)--(0,0);
\fill[YellowGreen,draw=red] (0,0)--(150:1)--(0,1)--(30:1)--(0,0); \node[fill opacity=1.0] at (0,0.5) {$x$};
\fill[YellowGreen,draw=red,shift={(0.87,-0.5)}] (0,0)--(150:1)--(0,1)--(30:1)--(0,0); 
\fill[YellowGreen,draw=blue,shift={(0,2)}] (0,0)--(150:1)--(0,1)--(30:1)--(0,0); \node[fill opacity=1.0] at (0,2.5) {$w$};
 \fill[YellowGreen,draw=blue,shift={(0.87,-1.5)}] (0,0)--(150:1)--(0,1)--(30:1)--(0,0); \node[fill opacity=1.0] at (0.87,-1) {$y$};
\draw[red,thick] (0.2175,0.375)--(0.435,0.25)--(0.6525,0.125);
\draw[blue,thick] (0.2175,2.375)--(0.435,2.25)--(0.435,-0.75)--(0.6525,-0.875);
\end{tikzpicture}
\]
We see that the red path meets the blue path contradicting the second point of Lemma \ref{lem:t0contraints1}. Again, we must have $x\le y$.

Now, we will show $x\le z$. We have the same three cases for $z$.

\noindent \textbf{Case 1: $x$ and $z$ are part of the same border strip.} In this case, since the border strip was part of one of the colored RPP, we get the inequality $x\le z$ automatically just as in the case for $y$. 

\noindent \textbf{Case 2: $x$ is part of the $i^{th}$ blue border strip and $z$ is part of the $i^{th}$ red border strip.} In this case, we must have a cell to the right of $z$ that is also part of the $i^{th}$ red border strip. Call the entry in this cell $w$. By Lemma \ref{lem:t0contraints1}, we must have $x\le w$. Suppose $x>z$. The resulting lozenges before shifting are given by
\[
\begin{tikzpicture}[fill opacity=0.5, baseline=(current bounding box.center)]
\fill[Orchid,draw=black,shift={(-0.87,-0.5)}] (0,0)--(0,1)--(30:1)--(-30:1)--(0,0);
\fill[Orchid,draw=red,shift={(-0.87,0.5)}] (0,0)--(0,1)--(30:1)--(-30:1)--(0,0);
\fill[Orchid,draw=red,shift={(-0.87,1.5)}] (0,0)--(0,1)--(30:1)--(-30:1)--(0,0);
\fill[YellowGreen,draw=blue] (0,0)--(150:1)--(0,1)--(30:1)--(0,0); \node[fill opacity=1.0] at (0,0.5) {$x$};
\fill[YellowGreen,draw=red,shift={(0,2)}] (0,0)--(150:1)--(0,1)--(30:1)--(0,0); \node[fill opacity=1.0] at (0,2.5) {$w$};
\fill[YellowGreen,draw=red,shift={(-0.87,-1.5)}] (0,0)--(150:1)--(0,1)--(30:1)--(0,0); \node[fill opacity=1.0] at (-0.87,-1) {$z$};
\draw[blue,thick,shift={(0.1,0.07)}] (-0.2175,0.375)--(-0.435,0.25)--(-0.435,-0.75);
\draw[red,thick] (-0.2175,2.375)--(-0.435,2.25)--(-0.435,-0.75)--(-0.6525,-0.875);
\end{tikzpicture}
\qquad \text{or} \qquad
\begin{tikzpicture}[fill opacity=0.5, baseline=(current bounding box.center)]
\fill[Orchid,draw=red,shift={(-0.87,-0.5)}] (0,0)--(0,1)--(30:1)--(-30:1)--(0,0);
\fill[Orchid,draw=red,shift={(-0.87,0.5)}] (0,0)--(0,1)--(30:1)--(-30:1)--(0,0);
\fill[Orchid,draw=red,shift={(-0.87,1.5)}] (0,0)--(0,1)--(30:1)--(-30:1)--(0,0);
\fill[YellowGreen,draw=blue] (0,0)--(150:1)--(0,1)--(30:1)--(0,0); \node[fill opacity=1.0] at (0,0.5) {$x$};
\fill[YellowGreen,draw=blue,shift={(-0.87,-0.5)}] (0,0)--(150:1)--(0,1)--(30:1)--(0,0); 
\fill[YellowGreen,draw=red,shift={(0,2)}] (0,0)--(150:1)--(0,1)--(30:1)--(0,0); \node[fill opacity=1.0] at (0,2.5) {$w$};
\fill[YellowGreen,draw=red,shift={(-0.87,-1.5)}] (0,0)--(150:1)--(0,1)--(30:1)--(0,0); \node[fill opacity=1.0] at (-0.87,-1) {$z$};
\draw[blue,thick] (-0.2175,0.375)--(-0.435,0.25)--(-0.6525,0.125);
\draw[red,thick] (-0.2175,2.375)--(-0.435,2.25)--(-0.435,-0.75)--(-0.6525,-0.875);
\end{tikzpicture}
\]
The first configuration gives a coupled pair of type 2 and the second configuration violates the first point of Lemma \ref{lem:t0contraints1}.

\noindent \textbf{Case 3: $x$ is part of the $(i+1)^{th}$ red border strip and $z$ is part of the $i^{th}$ blue border strip.} We must have a the cell above $z$ that is also part of the $i^{th}$ blue border strip with entry we will denote by $w$. By Lemma \ref{lem:t0contraints1}, we know $x\le w$. Again, suppose $x>y$. Looking at the lozenge configurations, we have
\[
\begin{tikzpicture}[fill opacity=0.5, baseline=(current bounding box.center)]
\fill[Orchid,draw=black,shift={(-0.87,-0.5)}] (0,0)--(0,1)--(30:1)--(-30:1)--(0,0);
\fill[Orchid,draw=blue,shift={(-0.87,0.5)}] (0,0)--(0,1)--(30:1)--(-30:1)--(0,0);
\fill[Orchid,draw=blue,shift={(-0.87,1.5)}] (0,0)--(0,1)--(30:1)--(-30:1)--(0,0);
\fill[YellowGreen,draw=red] (0,0)--(150:1)--(0,1)--(30:1)--(0,0); \node[fill opacity=1.0] at (0,0.5) {$x$};
\fill[YellowGreen,draw=blue,shift={(0,2)}] (0,0)--(150:1)--(0,1)--(30:1)--(0,0); \node[fill opacity=1.0] at (0,2.5) {$w$};
 \fill[YellowGreen,draw=blue,shift={(-0.87,-1.5)}] (0,0)--(150:1)--(0,1)--(30:1)--(0,0); \node[fill opacity=1.0] at (-0.87,-1) {$z$};
\draw[red,thick,shift={(0.1,0.07)}] (-0.2175,0.375)--(-0.435,0.25)--(-0.435,-0.75);
\draw[blue,thick] (-0.2175,2.375)--(-0.435,2.25)--(-0.435,-0.75)--(-0.6525,-0.875);
\end{tikzpicture}
\qquad \text{or} \qquad
\begin{tikzpicture}[fill opacity=0.5, baseline=(current bounding box.center)]
\fill[Orchid,draw=blue,shift={(-0.87,-0.5)}] (0,0)--(0,1)--(30:1)--(-30:1)--(0,0);
\fill[Orchid,draw=blue,shift={(-0.87,0.5)}] (0,0)--(0,1)--(30:1)--(-30:1)--(0,0);
\fill[Orchid,draw=blue,shift={(-0.87,1.5)}] (0,0)--(0,1)--(30:1)--(-30:1)--(0,0);
\fill[YellowGreen,draw=red] (0,0)--(150:1)--(0,1)--(30:1)--(0,0); \node[fill opacity=1.0] at (0,0.5) {$x$};
\fill[YellowGreen,draw=red,shift={(-0.87,-0.5)}] (0,0)--(150:1)--(0,1)--(30:1)--(0,0); 
\fill[YellowGreen,draw=blue,shift={(0,2)}] (0,0)--(150:1)--(0,1)--(30:1)--(0,0); \node[fill opacity=1.0] at (0,2.5) {$w$};
 \fill[YellowGreen,draw=blue,shift={(-0.87,-1.5)}] (0,0)--(150:1)--(0,1)--(30:1)--(0,0); \node[fill opacity=1.0] at (-0.87,-1) {$z$};
\draw[red,thick] (-0.2175,0.375)--(-0.435,0.25)--(-0.6525,0.125);
\draw[blue,thick] (-0.2175,2.375)--(-0.435,2.25)--(-0.435,-0.75)--(-0.6525,-0.875);
\end{tikzpicture}
\]
As with $y$, these violate the second point of Lemma \ref{lem:t0contraints1}.

In any case we must have $x\le z$. Altogether this shows that the inequalities of the RPP are satisfied, so the resulting filling gives a valid RPP.
\end{proof}

Example \ref{ex:shifted} gives an example of this construction. Note that this is reversible. We can take the paths of a single RPP and alternately color the border strips red and blue. Then we may reverse the slides and add or extend any border strips with zeros where necessary. The same case work as in the proof of Proposition \ref{prop:sliding} shows that there will be no coupled pairs of lozenges. Altogether this gives a bijective proof of Theorem \ref{thm:t0}.

\begin{example} \label{ex:shifted}
Consider the pair of RPPs of shape $\lambda=(4,4,3,3,1)$ shown below 
\[
\begin{tikzpicture}[scale=0.7]
    \reverseplanepartition{4}{{0},{0,1,4},{0,0,2},{0,0,0,1},{0,0,0,0}}
    \fill[White,fill opacity=0.5] (-0.9,-3.1) rectangle (7,4.6);
    \draw[blue,thick,shift={(0,-0.5)}] (-150:0.5)--(0,0)--(-30:1);
    \draw[blue,thick,shift={(-30:1.5)}] (-150:0.5)--(0,0)--(0,1);
    \draw[blue,thick,shift={(30:2)},shift={(-30:0.5)}] (-150:1)--(0,0)--(0,3);
    \draw[blue,thick,shift={(30:3)},shift={(0,2.5)}] (-150:0.5)--(0,0)--(-30:0.5);
    \draw[blue,thick,shift={(30:3.5)}] (0,2)--(0,0)--(-30:1);
    \draw[blue,thick,shift={(-30:4.5)},shift={(0,1.5)}] (0,2)--(0,0)--(-30:0.5);
    \draw[blue,thick,shift={(-30:3.5)},shift={(30:2)}] (-150:0.5)--(0,0)--(0,1);
    \draw[blue,thick,shift={(-30:2.5)},shift={(30:3.5)}] (-150:0.5)--(0,0)--(-30:0.5);
    \draw[blue,thick,shift={(-30:4)},shift={(30:2.5)}] (0,1)--(0,0)--(-30:1);
    \draw[blue,thick,shift={(-30:2.5)},shift={(30:0.5)}] (-150:1.5)--(0,0)--(-30:2);
    \draw[blue,thick,shift={(-30:4.5)},shift={(30:1.5)}] (-150:1)--(0,0)--(-30:0.5);
    \draw[blue,thick,shift={(-30:3.5)},shift={(-150:0.5)}] (-150:0.5)--(0,0)--(-30:1.5);
\end{tikzpicture}\hspace{1cm}
\begin{tikzpicture}[scale=0.7]
    \reverseplanepartition{4}{{3},{2,4,4},{0,1,4},{0,0,2,4},{0,0,0,3}}
    \fill[White,fill opacity=0.5] (-0.9,-3.1) rectangle (7,4.6);
    \draw[Red,thick,shift={(150:0.5)}] (0,-1)--(0,2);
    \draw[Red,thick,shift={(0,2.5)}] (-150:0.5)--(0,0)--(-30:0.5);
    \draw[Red,thick,shift={(0,1)},shift={(30:0.5)}] (0,1)--(0,0)--(-30:0.5);
    \draw[Red,thick,shift={(30:1.5)},shift={(0,0.5)}] (-150:0.5)--(0,0)--(0,2);
    \draw[Red,thick,shift={(30:3)},shift={(0,2.5)}] (-150:1.5)--(0,0)--(-30:1.5);
    \draw[Red,thick,shift={(30:4.5)},shift={(0,-1)}] (0,2)--(0,0)--(-30:0.5);
    \draw[Red,thick,shift={(30:4)},shift={(-30:1.5)}] (-150:0.5)--(0,0)--(0,2);
    \draw[Red,thick,shift={(30:6)},shift={(0,0.5)}] (-150:0.5)--(0,0)--(-30:0.5);
    \draw[Red,thick,shift={(30:6.5)},shift={(0,-1)}] (0,1)--(0,0)--(-30:1);
    \draw[Red,thick,shift={(30:2.5)},shift={(-30:5)}] (0,3)--(0,0);
    \draw[Red,thick,shift={(-30:2.5)}] (-150:1)--(0,0)--(0,1);
    \draw[Red,thick,shift={(30:3)},shift={(0,-1.5)}] (-150:0.5)--(0,0)--(-30:0.5);
    \draw[Red,thick,shift={(-30:3)},shift={(30:0.5)}] (0,1)--(0,0)--(-30:1.5);
    \draw[Red,thick,shift={(-30:4.5)},shift={(30:1.5)}] (-150:1)--(0,0)--(-30:0.5);
    \draw[Red,thick,shift={(-30:3.5)},shift={(-150:0.5)}] (-150:0.5)--(0,0)--(-30:1.5);
\end{tikzpicture}
\]
where one can check that there are no coupled pairs of lozenges. When we superimpose the path on top of one another we get the figure on the LHS below. After removing the portions of path corresponding to the forced zero height and shifting the path according to Proposition \ref{prop:sliding} we get the figure on the RHS below.
\[
\begin{tikzpicture}[scale=0.7]
    \draw[Black,thick,shift={(0,-1)}] (150:1)--(-30:4);
    \draw[Black,thick,shift={(-30:5)}] (-150:1)--(30:3);
    \draw[Black,thick,shift={(150:1)}] (0,-1)--(0,3);
    \draw[Black,thick,shift={(0,4)}] (-150:1)--(0,0)--(-30:1);
    \draw[Black,thick,shift={(30:3)},shift={(0,3)}] (-150:2)--(0,0)--(-30:2);
    \draw[Black,thick,shift={(30:6)},shift={(0,1)}] (-150:1)--(0,0)--(-30:2);
    \draw[Black,thick,shift={(30:8)}] (0,-1)--(0,-5);
    \draw[Red,thick,shift={(150:0.5)}] (0,-1)--(0,2);
    \draw[Red,thick,shift={(0,2.5)}] (-150:0.5)--(0,0)--(-30:0.5);
    \draw[Red,thick,shift={(0,1)},shift={(30:0.5)}] (0,1)--(0,0)--(-30:0.5);
    \draw[Red,thick,shift={(30:1.5)},shift={(0,0.5)}] (-150:0.5)--(0,0)--(0,2);
    \draw[Red,thick,shift={(30:3)},shift={(0,2.5)}] (-150:1.5)--(0,0)--(-30:1.5);
    \draw[Red,thick,shift={(30:4.5)},shift={(0,-1)}] (0,2)--(0,0)--(-30:0.5);
    \draw[Red,thick,shift={(30:4)},shift={(-30:1.5)}] (-150:0.5)--(0,0)--(0,2);
    \draw[Red,thick,shift={(30:6)},shift={(0,0.5)}] (-150:0.5)--(0,0)--(-30:0.5);
    \draw[Red,thick,shift={(30:6.5)},shift={(0,-1)}] (0,1)--(0,0)--(-30:1);
    \draw[Red,thick,shift={(30:2.5)},shift={(-30:5)}] (0,3)--(0,0);
    \draw[Red,thick,shift={(-30:2.5)}] (-150:1)--(0,0)--(0,1);
    \draw[Red,thick,shift={(30:3)},shift={(0,-1.5)}] (-150:0.5)--(0,0)--(-30:0.5);
    \draw[Red,thick,shift={(-30:3)},shift={(30:0.5)}] (0,1)--(0,0)--(-30:1.5);
    \draw[Red,thick,shift={(-30:4.5)},shift={(30:1.5)}] (-150:1)--(0,0)--(-30:0.5);
    \draw[Red,thick,shift={(-30:3.5)},shift={(-150:0.5)}] (-150:0.5)--(0,0)--(-30:1.5);
    \draw[blue,thick,shift={(0,-0.5)}] (-150:0.5)--(0,0)--(-30:1);
    \draw[blue,thick,shift={(-30:1.5)}] (-150:0.5)--(0,0)--(0,1);
    \draw[blue,thick,shift={(30:2)},shift={(-30:0.5)}] (-150:1)--(0,0)--(0,2.9);
    \draw[blue,thick,shift={(30:3)},shift={(0,2.4)}] (-150:0.5)--(0,0)--(-30:0.5);
    \draw[blue,thick,shift={(30:3.5)},shift={(0,-0.1)}] (0,2)--(0,0)--(-30:1);
    \draw[blue,thick,shift={(-30:4.5)},shift={(0,1.5)}] (0,1.9)--(0,0)--(-30:0.5);
    \draw[blue,thick,shift={(-30:3.5)},shift={(30:2)}] (-150:0.5)--(0,0)--(0,1);
    \draw[blue,thick,shift={(-30:2.5)},shift={(30:3.5)}] (-150:0.5)--(0,0)--(-30:0.5);
    \draw[blue,thick,shift={(-30:4)},shift={(30:2.5)}] (0,1)--(0,0)--(-30:1);
    \draw[blue,thick,shift={(-30:2.5)},shift={(30:0.5)},shift={(0,-0.1)}] (-150:1.4)--(0,0)--(-30:2);
    \draw[blue,thick,shift={(-30:4.5)},shift={(30:1.5)},shift={(0,-0.1)}] (-150:1)--(0,0)--(-30:0.4);
    \draw[blue,thick,shift={(-30:3.5)},shift={(-150:0.5)},shift={(0,-0.1)}] (-150:0.4)--(0,0)--(-30:1.4);
\end{tikzpicture}\hspace{1cm}
\begin{tikzpicture}[scale=0.7]
    \draw[Black,thick,shift={(0,-1)}] (150:1)--(-30:4);
    \draw[Black,thick,shift={(-30:5)}] (-150:1)--(30:3);
    \draw[Black,thick,shift={(150:1)}] (0,-1)--(0,3);
    \draw[Black,thick,shift={(0,4)}] (-150:1)--(0,0)--(-30:1);
    \draw[Black,thick,shift={(30:3)},shift={(0,3)}] (-150:2)--(0,0)--(-30:2);
    \draw[Black,thick,shift={(30:6)},shift={(0,1)}] (-150:1)--(0,0)--(-30:2);
    \draw[Black,thick,shift={(30:8)}] (0,-1)--(0,-5);
    \draw[Red,thick,shift={(150:0.5)}] (0,-1)--(0,2);
    \draw[Red,thick,shift={(0,2.5)}] (-150:0.5)--(0,0)--(-30:0.5);
    \draw[Red,thick,shift={(0,1)},shift={(30:0.5)}] (0,1)--(0,0)--(-30:0.5);
    \draw[Red,thick,shift={(30:1.5)},shift={(0,0.5)}] (-150:0.5)--(0,0)--(0,2);
    \draw[Red,thick,shift={(30:3)},shift={(0,2.5)}] (-150:1.5)--(0,0)--(-30:1.5);
    \draw[Red,thick,shift={(30:4.5)},shift={(0,-1)}] (0,2)--(0,0)--(-30:0.5);
    \draw[Red,thick,shift={(30:4)},shift={(-30:1.5)}] (-150:0.5)--(0,0)--(0,2);
    \draw[Red,thick,shift={(30:6)},shift={(0,0.5)}] (-150:0.5)--(0,0)--(-30:0.5);
    \draw[Red,thick,shift={(30:6.5)},shift={(0,-1)}] (0,1)--(0,0)--(-30:1);
    \draw[Red,thick,shift={(30:2.5)},shift={(-30:5)}] (0,3)--(0,0);
    \draw[Red,thick,shift={(-30:2.5)},shift={(0,-1)}] (0,0)--(0,1);
    \draw[Red,thick,shift={(30:3)},shift={(0,-1.5)},shift={(0,-1)}] (-150:0.5)--(0,0)--(-30:0.5);
    \draw[Red,thick,shift={(-30:3)},shift={(30:0.5)},shift={(0,-1)}] (0,1)--(0,0)--(-30:1);
    \draw[blue,thick,shift={(-30:1.5)},shift={(0,-1)}] (0,0)--(0,1);
    \draw[blue,thick,shift={(30:2)},shift={(-30:0.5)},shift={(0,-1)}] (-150:1)--(0,0)--(0,3);
    \draw[blue,thick,shift={(30:3)},shift={(0,2.5)},shift={(0,-1)}] (-150:0.5)--(0,0)--(-30:0.5);
    \draw[blue,thick,shift={(30:3.5)},shift={(0,-1)}] (0,2)--(0,0)--(-30:1);
    \draw[blue,thick,shift={(-30:4.5)},shift={(0,1.5)},shift={(0,-1)}] (0,2)--(0,0)--(-30:0.5);
    \draw[blue,thick,shift={(-30:3.5)},shift={(30:2)},shift={(0,-1)}] (-150:0.5)--(0,0)--(0,1);
    \draw[blue,thick,shift={(-30:2.5)},shift={(30:3.5)},shift={(0,-1)}] (-150:0.5)--(0,0)--(-30:0.5);
    \draw[blue,thick,shift={(-30:4)},shift={(30:2.5)},shift={(0,-1)}] (0,1)--(0,0);
\end{tikzpicture}
\]
The newly constructed path configuration corresponds to the lozenge tiling:
\[
\begin{tikzpicture}[scale=0.7]
    \reverseplanepartition{4}{{3},{2,4,4},{1,4,4},{1,2,2,4},{0,0,1,3}}
    \fill[White,fill opacity=0.5] (-0.9,-3.1) rectangle (7,4.6);
    \draw[Red,thick,shift={(150:0.5)}] (0,-1)--(0,2);
    \draw[Red,thick,shift={(0,2.5)}] (-150:0.5)--(0,0)--(-30:0.5);
    \draw[Red,thick,shift={(0,1)},shift={(30:0.5)}] (0,1)--(0,0)--(-30:0.5);
    \draw[Red,thick,shift={(30:1.5)},shift={(0,0.5)}] (-150:0.5)--(0,0)--(0,2);
    \draw[Red,thick,shift={(30:3)},shift={(0,2.5)}] (-150:1.5)--(0,0)--(-30:1.5);
    \draw[Red,thick,shift={(30:4.5)},shift={(0,-1)}] (0,2)--(0,0)--(-30:0.5);
    \draw[Red,thick,shift={(30:4)},shift={(-30:1.5)}] (-150:0.5)--(0,0)--(0,2);
    \draw[Red,thick,shift={(30:6)},shift={(0,0.5)}] (-150:0.5)--(0,0)--(-30:0.5);
    \draw[Red,thick,shift={(30:6.5)},shift={(0,-1)}] (0,1)--(0,0)--(-30:1);
    \draw[Red,thick,shift={(30:2.5)},shift={(-30:5)}] (0,3)--(0,0);
    \draw[Red,thick,shift={(-30:2.5)},shift={(0,-1)}] (0,0)--(0,1);
    \draw[Red,thick,shift={(30:3)},shift={(0,-1.5)},shift={(0,-1)}] (-150:0.5)--(0,0)--(-30:0.5);
    \draw[Red,thick,shift={(-30:3)},shift={(30:0.5)},shift={(0,-1)}] (0,1)--(0,0)--(-30:1);
    \draw[blue,thick,shift={(-30:1.5)},shift={(0,-1)}] (0,0)--(0,1);
    \draw[blue,thick,shift={(30:2)},shift={(-30:0.5)},shift={(0,-1)}] (-150:1)--(0,0)--(0,3);
    \draw[blue,thick,shift={(30:3)},shift={(0,2.5)},shift={(0,-1)}] (-150:0.5)--(0,0)--(-30:0.5);
    \draw[blue,thick,shift={(30:3.5)},shift={(0,-1)}] (0,2)--(0,0)--(-30:1);
    \draw[blue,thick,shift={(-30:4.5)},shift={(0,1.5)},shift={(0,-1)}] (0,2)--(0,0)--(-30:0.5);
    \draw[blue,thick,shift={(-30:3.5)},shift={(30:2)},shift={(0,-1)}] (-150:0.5)--(0,0)--(0,1);
    \draw[blue,thick,shift={(-30:2.5)},shift={(30:3.5)},shift={(0,-1)}] (-150:0.5)--(0,0)--(-30:0.5);
    \draw[blue,thick,shift={(-30:4)},shift={(30:2.5)},shift={(0,-1)}] (0,1)--(0,0);
\end{tikzpicture}
\]
In terms of the border strip decomposition, we have
\[
\begin{tikzpicture}[scale=0.7]
    \draw[black,thick] (0,0) grid (3,4);
    \draw[black,thick] (0,0) grid (4,2);
    \draw[black,thick] (0,0) grid (1,5);
    \draw[blue,very thick] (1,3.5)--(2.5,3.5)--(2.5,1.5)--(3.5,1.5)--(3.5,1);
\end{tikzpicture}\hspace{1cm}
\begin{tikzpicture}[scale=0.7]
    \draw[black,thick] (0,0) grid (3,4);
    \draw[black,thick] (0,0) grid (4,2);
    \draw[black,thick] (0,0) grid (1,5);
    \draw[red,very thick] (0,4.5)--(0.5,4.5)--(0.5,3.5)--(2.5,3.5)--(2.5,1.5)--(3.5,1.5)--(3.5,0);
    \draw[red,very thick] (1,2.5)--(1.5,2.5)--(1.5,1);
\end{tikzpicture}\hspace{1cm}
\begin{tikzpicture}[scale=0.7]
    \draw[black,thick] (0,0) grid (3,4);
    \draw[black,thick] (0,0) grid (4,2);
    \draw[black,thick] (0,0) grid (1,5);
    \draw[red,very thick] (0,4.5)--(0.5,4.5)--(0.5,3.5)--(2.5,3.5)--(2.5,1.5)--(3.5,1.5)--(3.5,0);
    \draw[blue,very thick] (0,2.5)--(1.5,2.5)--(1.5,0.5)--(2.5,0.5)--(2.5,0);
    \draw[red,very thick] (0,1.5)--(0.5,1.5)--(0.5,0);
\end{tikzpicture}
\]
where the first two figures show the parts of the border strips not forced to be zero and the rightmost image is the result after sliding.
\end{example}

\bibliographystyle{abbrv}
\bibliography{InteractingRPP}

\begin{thebibliography}{10}

\bibitem{aggarwal2023colored}
A.~Aggarwal, A.~Borodin, and M.~Wheeler.
\newblock Colored fermionic vertex models and symmetric functions.
\newblock {\em Communications of the American Mathematical Society},
  3(08):400--630, 2023.

\bibitem{Andrews}
K.~Andrews, G.E.~Eriksson.
\newblock {\em Integer Partitions}.
\newblock Cambridge University Press, 2004.

\bibitem{betea2014perfect}
D.~Betea, C.~Boutillier, J.~Bouttier, G.~Chapuy, S.~Corteel, and
  M.~Vuleti{\'c}.
\newblock Perfect sampling algorithm for schur processes.
\newblock {\em arXiv preprint arXiv:1407.3764}, 2014.

\bibitem{borodin2016lectures}
A.~Borodin and V.~Gorin.
\newblock Lectures on integrable probability.
\newblock {\em Probability and statistical physics in St. Petersburg},
  91:155--214, 2016.

\bibitem{boutillier2017dimers}
C.~Boutillier, J.~Bouttier, G.~Chapuy, S.~Corteel, and S.~Ramassamy.
\newblock Dimers on rail yard graphs.
\newblock {\em Annales de l’Institut Henri Poincar{\'e} (D) Combinatorics,
  Physics and their Interactions}, 4(4):479--539, 2017.

\bibitem{CGK}
S.~Corteel, A.~Gitlin, and D.~Keating.
\newblock Colored vertex models and $k$-tilings of the aztec diamond.
\newblock {\em Transactions of the American Mathematical Society}, 2025.

\bibitem{LLT}
S.~Corteel, A.~Gitlin, D.~Keating, and J.~Meza.
\newblock A vertex model for {LLT} polynomials.
\newblock {\em International Mathematics Research Notices}, 2022:15869--15931,
  2022.

\bibitem{GANSNER198171}
E.~R. Gansner.
\newblock The hillman-grassl correspondence and the enumeration of reverse
  plane partitions.
\newblock {\em Journal of Combinatorial Theory, Series A}, 30(1):71--89, 1981.

\bibitem{Keating2023ShufflingAF}
D.~Keating and M.~Nicoletti.
\newblock Shuffling algorithm for coupled tilings of the aztec diamond.
\newblock {\em Annales Henri Poincar{\'e}}, 2023.

\bibitem{reshetikhin2010lectures}
N.~{Reshetikhin}.
\newblock {Lectures on the integrability of the 6-vertex model}.
\newblock {\em arXiv e-prints}, page arXiv:1010.5031, Oct. 2010.

\bibitem{Stanley}
R.~Stanley.
\newblock {\em Enumerative Combinatorics}, volume~2.
\newblock Cambridge University Press, 1999.

\end{thebibliography}

\end{document}